\newlength{\hchng}
\newlength{\vchng}
\newcommand {\bc} {\begin{center}}
\newcommand {\ec} {\end{center}}
\def\keywords{\xdef\@thefnmark{}\@footnotetext}
\theoremstyle{plain}
\newtheorem{thm}{Theorem}[section]
\newtheorem{lem}[thm]{Lemma}
\newtheorem{defn}[thm]{Definition}
\newtheorem{prop}[thm]{Proposition}
\newtheorem{cor}[thm]{Corollary}
\newtheorem{rem}{Remark}[section]
\newtheorem{assum}[thm]{Assumption}
\newtheorem{pro}{Property}
\newtheorem{claim}{Claim}
\newenvironment{proof}[1]{\begin{trivlist} \item[] {\em Proof of #1:}}{\hfill $\Box$
                      \end{trivlist}}
\newcommand{\ud}{\, d}
\newcommand{\la}{\lambda}
\newcommand{\al}{\alpha}
\newcommand{\R}{\mathbb{R}}
\newcommand{\mS}{\mathbb{S}}
\newcommand{\Om}{\Omega}
\newcommand{\pa}{\partial}
\newcommand{\eps}{\epsilon}
\newcommand{\lqu}{\textquoteleft}
\newcommand{\lf}{\left\lfloor}  
\newcommand{\rf}{\right\rfloor}  
\newcommand{\etb}{\emph{\textbf{0}}}
\newcommand{\norm}[1]{\left\lVert#1\right\rVert}
\newcommand{\Ga}{\Gamma}
\title{Two-phase free boundary problems in convex domains}
\date{}     
\author{Thomas Beck \and David Jerison\thanks{The second author was supported in part by NSF Grant 1500771, a Simons Fellowship, Simons Foundation Grant (601948, DJ) and a Guggenheim Fellowship.}  \and Sarah Raynor}
\date{\today}
\begin{document}
\maketitle
\begin{abstract} 
We study the regularity of minimizers of a two-phase free boundary problem. For a class of n-dimensional convex domains, we establish the Lipschitz continuity of the minimizer up to the fixed boundary under Neumann boundary conditions. Our proof uses an almost monotonicity formula for the Alt-Caffarelli-Friedman functional restricted to the convex domain. This requires a variant of the classical Friedland-Hayman inequality for geodesically convex subsets of the sphere with Neumann boundary conditions. To apply this inequality, in addition to convexity, we require a Dini condition governing the rate at which the fixed boundary converges to its limit cone
at each boundary point. 
\end{abstract}
\keywords{\emph{Key words and phrases.} convexity, free boundary problem, 
Neumann condition, monotonicity formula, Friedland-Hayman inequality.}

\section{Introduction}
 
 \indent
We dedicate this work to the memory of Eli Stein.   He had a profound impact on each of us in many ways. We are deeply grateful to him for his teaching, guidance and wisdom.

In this paper we study the regularity of minimizers of a two-phase free boundary problem in an $n$-dimensional convex domain. We show that the minimizer is Lipschitz continuous up to the fixed boundary under Neumann conditions by first establishing an almost-monotonicity formula for the Alt-Caffarelli-Friedman (ACF) functional (see \eqref{eq:ACFfunctional}).    This functional is the key to interior regularity, and its properties near a fixed convex boundary under Neumann conditions are of independent interest.  

In our version of almost monotonicity for the ACF functional on convex domains, we require,
in addition to convexity,  a sharp Dini condition controlling the rate at which the boundary converges to its limit cone 
(Assumption \ref{assum:Main}).   This Dini condition holds for every convex domain
in dimension 2, and it is surprisingly hard to construct an example of a convex domain in dimension
3 for which this Dini condition fails (see Section \ref{sec:assum}).  Thus we have not proved Lipschitz regularity on general convex domains.  Instead, we have proved the Lipschitz bound in many special cases and identified a new geometric obstruction to the monotonicity method.   We expect this approach to  lead, ultimately, to a full proof in general convex domains, but such a proof has to circumvent this obstruction.

The variational problem under consideration is as follows. Let $\Omega\subset\mathbb{R}^n$ be an open, 
bounded, convex domain, and let $K\subset \bar \Omega$ be a closed set.   For a given $u_0\in H^1(\bar{\Omega})$, the function $u$ is defined to be the minimizer of the functional 
\begin{equation} \label{eq:ACfunctional}
J[v] = \int_{\Omega} (\left|\nabla v \right|^2 + Q(x) 1_{\{v>0\}}) \, dx
\end{equation}
over the set $v \in  H^{1}(\Omega)$ such that $v = u_0$ on $K$.   Here $Q\in C^\infty(\bar \Omega)$ is a positive, smooth function, and $1_{\{v>0\}}$  is the indicator function of the set $\{v>0\}$.   This variational problem has applications to the flow of two liquids in jet and cavity models (\cite{BZ}, \cite{Gu1}), as well as fluid dynamics, optimal shape design and electromagnetism (see, for example, \cite{Fr1}, \cite{FR}, \cite{CSV}). 

Formally, the Euler-Lagrange equation $J'[u]=0$ is expressed as three equations.
Two of them are the straightforward equations
$\Delta u = 0$ in each of the two phases $\Omega^\pm$, with the positive phase $\Omega^+$ 
defined as $\{x\in \Omega: u(x) >0\}$, and non-positive phase, $\Omega^-$, defined as the interior of the set $\{x\in \Omega: u(x) \le 0\}$.  The third is a gradient jump condition,
\begin{align*}
\left|\nabla u^{+}(x)\right|^2 - \left|\nabla u^{-}(x)\right|^2 = Q(x)
\end{align*}
at points $x$ belonging to the interface between the phases, $ \partial\Omega^+ \cap \partial \Omega^-$,
also known as the free boundary.  The existence of minimizers is easy to establish, but, a priori, they only 
satisfy $\nabla u \in L^2(\Omega)$.  
A major goal of the regularity theory is to show that $u$ does indeed satisfy the jump
condition in a suitable sense. 

The first major step in the regularity theory is to prove that $u$ is Lipschitz continuous. In the one-phase case, defined as the case in which $u\geq0$ and $u\equiv 0$ on $\Omega^-$,
the Lipschitz regularity in the interior was proved by Alt and Caffarelli  \cite{AC1}. To prove interior Lipschitz regularity 
in the two-phase case, Alt, Caffarelli, and Friedman \cite{ACF} introduced the functional $\Phi(t)$ given by
\begin{equation}\label{eq:ACFfunctional}
\Phi(t) = \left(\frac{1}{t^2}\int_{B_t(x_0)}\frac{\left|\nabla u^{+}(x)\right|^2}{|x-x_0|^{n-2}} \ud x \right)\cdot  \left(\frac{1}{t^2}\int_{B_t(x_0)}\frac{\left|\nabla u^{-}(x)\right|^2}{|x -x_0|^{n-2}} \ud x \right),
\end{equation}
where $B_t(x_0)$ is the $n$-dimensional ball centered at an interior point $x_0\in\Omega$. 
They proved that if $\Delta u^{\pm}\geq0$ and $u^{+}u^{-}\equiv 0$, then 
$\Phi(t)$  is a monotone increasing function of $t$, giving control on $\Phi(t)$ for all scales $t$.
This is the key ingredient in the proof that $u$ is Lipschitz continuous in the two-phase case.  Moreover,
the Lipschitz bound is crucial to the subsequent regularity theory of the free
boundary, and, in particular, to the proof that the jump condition is satisfied.  Indeed,
once the solution $u$ is Lipschitz, one can rescale by dilation and study the ``blow-up" limit.
This leads to the further regularity theory of the free boundary in much the way that rescaling to cones 
leads to regularity of area-minimizing surfaces and hypersurfaces.  

We restrict ourselves to the case in which the 
fixed boundary is convex at points where Neumann conditions are imposed.
This is nearly necessary.  Even for harmonic functions away from
a free boundary, Lipschitz regularity up to the fixed boundary in
the presence of Neumann conditions requires a convexity or
exterior ball condition on $\pa\Omega$.  
Moreover, when a free boundary meets the fixed boundary with Dirichlet conditions,
A. Gurevich has shown that the Lipschitz property frequently fails
(see \cite{Gu}). 

In \cite{Ra} the Lipschitz regularity up to the convex Neumann boundary is proved in all dimensions
for the one-phase free boundary problem.  As in the interior case, treated by Alt and Caffarelli, the functional $\Phi$ is not used.  In dimension 2, Lipschitz bounds for two-phase minimizers are proved
in \cite{GMR}.    The present paper addresses the $n$-dimensional case, in which the boundedness of the functional 
$\Phi$, restricted to the convex set, is the most serious new issue.  The proof of the almost-monotonicity of the 
restricted functional $\Phi(t)$ requires a new version of the Friedland-Hayman inequality for eigenvalues on the sphere.   The original inequality concerns the whole sphere.   Our version applies to geodesically convex subsets of the sphere
with Neumann boundary conditions. 

Variants of the monotonicity of the ACF functional $\Phi$ appear in \cite{Ca3} and \cite{FL}. 
Also in \cite{CJK}, an even weaker property, namely that $\Phi$ is bounded, is proved under
under  the weaker assumption that $\Delta u^{\pm}\geq-1$.   While most arguments proving
Lipschitz continuity use the functional $\Phi$, there is an argument due
to Dipierro and Karakhanyan in \cite{DK1} proving interior Lipschitz continuity more 
directly for a functional $J$ with $|\nabla u|^2$ replaced by $|\nabla u|^p$ for $p\neq 2$.   
Their approach is to establish a dichotomy in which either the free boundary is smooth or else the solution $u$ has at most linear growth.

In the next section we state our main results and the Dini condition.  The rest of the paper is structured as follows. In Section \ref{sec:Phi} we prove an almost-monotonicity formula, assuming a Friedland-Hayman inequality for convex
cones and estimates on an error term related to the non-conic character of the convex boundary. This error term is absent in the two-dimensional case. In Section \ref{sec:FH1} we prove the Friedland-Hayman type inequality, by
expressing it in its equivalent formulation in terms of an eigenvalue
problem on geodesically convex subsets of $\mathbb{S}^{n-1}$,
and then deducing it from inequalities on closed manifolds with bounds on Ricci
curvature.  Next we transplant the Friedland-Hayman inequality in Section \ref{sec:FH2} to our nearly convex spherical slices using the Dini condition. In Section \ref{sec:harmonic}, we estimate the error term in the monotonicity formula related to the non-conic character of the boundary. This section uses the exact same Dini condition. These previous sections are then used in Section \ref{sec:Lipschitz} to deduce the Lipschitz continuity of the minimizer from almost-monotonicity. In Section \ref{sec:assum} we show that, in dimension two, Assumption \ref{assum:Main} is in fact satisfied by all convex sets, while there are three (and higher) dimensional convex sets for which it fails. Finally, in the Appendix we provide the proofs of some of the technical results required in Sections \ref{sec:FH2} and \ref{sec:harmonic}, namely, we show how to approximate non-smooth geodesically convex subsets of $\mathbb{S}^{n-1}$ by smooth strictly convex sets and prove properties of Lipschitz parametrizations of the spherical slices.  

\section{Statement of the main results} \label{sec:statement}

We will study the minimizer $u$ near the Neumann boundary, $\pa\Omega\backslash K$.  Fix a ball $B$ such that $\bar B\cap K  = \emptyset$.  Recall that  from
results of \cite{ACF} and \cite{Ra}, the minimizer $u$ of \eqref{eq:ACfunctional}
satisfies
\begin{enumerate}
\item[i)] $u\in C^{\epsilon}(B\cap \bar{\Omega})$ for some $\epsilon>0$;
\item[ii)] $\Delta u = 0$ in $\{u>0\} \cap B \cap \bar\Omega$
and $\{u<0\} \cap B \cap \bar\Omega$;
\item[iii)] the normal derivative $\pa_{\nu}u = 0$ weakly on $B\cap \pa \Omega \cap\{u\neq0\}$.
\end{enumerate}

Here and throughout, $B_r(x)$ will denote the open ball of
radius $r$ in $\R^n$ centered at $x$ and $B_r$ the corresponding
ball centered at the origin.   After suitable dilation and translation,
we may assume the ball $B$ in the preceding paragraph is $B = B_2$.  
\begin{defn} \label{defn:N}
Given $x\in B_1 \cap \pa\Omega$, for $t\in(0,1)$, define the function
\begin{align*}
M_{x}(t)  =  \displaystyle{\sup_{y\in\pa\Omega : 0< |y-x|\le t}} \nu(y)\cdot \left(y-x\right).
\end{align*}
\end{defn}
Here, $\nu(y)$ denotes the outer unit normal to the supporting hyperplane 
of $\Om$ at $y$.  If the normal and support plane are not unique, then
the supremum is taken over all such.

The following condition of Dini type gives a quantitative estimate on
the extent to which $\Om$ is approximated by a cone with vertex at $x$.  
\begin{assum}[Dini Condition for $t^{-1}M_x(t)$] \label{assum:Main}
There exists a constant $C_*$, such that for all $x\in B_1 \cap \pa\Omega$, 
\begin{align*}
\int_{0^+}^{1} \frac{M_{{{x}}}(t)}{t^2} \ud t & < C_*.
\end{align*}
\end{assum}

Note that if $\Om$ is a cone with vertex at $x$, then $M_x(t) \equiv 0$.  
For any convex domain $\Omega$, we have $M_{{x}}(t)\leq Ct$, and 
this bound is just short of ensuring that the assumption holds. 
In Section \ref{sec:assum}, we will discuss this assumption further, and show in particular that it always holds for two dimensional convex sets and for convex $C^{1,\beta}$ domains
for any $\beta>0$.   On the other hand, there are examples of convex 
domains in $\R^n$ for $n\ge 3$ for which the assumption does not hold, 
and we will give a construction in Section \ref{sec:assum}.

Our main theorem states that the minimizer $u$ is Lipschitz continuous up to the Neumann part of the boundary $\pa\Omega$, provided our assumption holds.
\begin{thm} \label{thm:Lipschitz} 
 Suppose that $\Om$ is a convex domain, and let $u$ be a minimizer
of \eqref{eq:ACfunctional} among functions equal to $u_0$ on the compact set $K$.
After suitable dilation and translation, we may assume $0 \in \pa\Om$,
$\Om$ has diameter at least $2$, and $\bar B_2 \cap K = \emptyset$.  
 Under \emph{Assumption \ref{assum:Main}}, there exists a constant $C$, depending only on the $C^{\eps}$-norm of $u$ in $B_2 \cap \bar \Om$, 
 the Lipschitz constant of $\pa\Omega$, and the constant $C_*$ appearing in the  Dini condition, such that 
 \begin{align*}
 \sup_{\Omega\cap B_{1}}\left|\nabla u\right|\leq C.
 \end{align*}
\end{thm}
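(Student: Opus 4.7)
The plan is to follow the classical Alt--Caffarelli--Friedman (ACF) scheme for deducing Lipschitz regularity: combine the almost-monotonicity of the restricted functional $\Phi_{x_0}(t)$ established in Section \ref{sec:Phi} with a scale-one upper bound coming from the $C^{\eps}$ norm of $u$, then convert the resulting uniform bound on $\Phi_{x_0}$ into a pointwise Lipschitz bound at $x_0$ by the standard argument. A preliminary reduction is that it suffices to prove a uniform bound on $|\nabla u(x_0)|$ at free boundary points $x_0 \in (\overline{\Omega^+} \cap \overline{\Omega^-}) \cap B_1$: in the interior of either phase, or on $\partial\Omega$ away from the free boundary, standard interior or Neumann-on-convex elliptic estimates (the latter as in \cite{Ra}) already give the bound, while the interior free boundary case is covered by the original ACF monotonicity on a ball contained in $\Omega$. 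Hence the only new case is a free boundary point lying on or arbitrarily close to $\partial\Omega$, and by continuity it suffices to treat $x_0 \in \partial\Omega \cap B_1$.

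At such $x_0$, the almost-monotonicity of Section \ref{sec:Phi} should take the form
$$\Phi_{x_0}(t) \leq \exp\!\left( C \int_{t}^{1} \frac{M_{x_0}(s)}{s^2}\,ds \right) \Phi_{x_0}(1) \qquad \text{for } t \in (0,1],$$
the error $M_{x_0}(s)/s$ controlling two distinct geometric defects on the ball $B_s(x_0) \cap \Omega$: first, the deficit in the Friedland--Hayman inequality on the spherical slice $\mathbb{S}^{n-1} \cap (\Omega - x_0)/s$, which is only approximately a geodesically convex subset of $\mathbb{S}^{n-1}$ (treated in Sections \ref{sec:FH1}--\ref{sec:FH2}); and second, the non-conic boundary correction appearing when one integrates by parts against the weight $|x-x_0|^{2-n}$ (treated in Section \ref{sec:harmonic}). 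Assumption \ref{assum:Main} bounds the integral uniformly by $CC_*$, so the exponential factor is controlled by a constant depending only on $C_*$. A standard energy estimate for the minimizer together with the $C^{\eps}$ bound on $u$ gives $\Phi_{x_0}(1) \leq C$, yielding $\Phi_{x_0}(t) \leq C$ uniformly in $t \in (0,1]$.

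From here, the final step (Section \ref{sec:Lipschitz}) is the classical deduction that a uniform bound on $\Phi_{x_0}$ implies at-most-linear growth $u^{\pm}(y) \leq C|y - x_0|$ near $x_0$, and hence the pointwise gradient bound $|\nabla u(x_0)| \leq C$ via the interior harmonic gradient estimate applied on the largest admissible ball in $\Omega^{\pm}$ centered at $x_0$. Since the constants are uniform in $x_0 \in \partial\Omega \cap B_1$ (and a similar argument handles interior free boundary points lying near $\partial\Omega$, by choosing a nearby $\tilde x_0 \in \partial\Omega$ and applying the same monotonicity), this produces the uniform Lipschitz bound claimed. The main obstacle is precisely the one isolated by the authors in Sections \ref{sec:Phi}--\ref{sec:harmonic}: showing that the almost-monotonicity error is of the sharp order $M_{x_0}(s)/s$. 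This requires both the transplanted Friedland--Hayman inequality for nearly-convex spherical slices with Neumann conditions and a tight control on the non-conic boundary integrals; anything larger (for instance an error of order $(M_{x_0}(s)/s)^{1/2}$) would fail to be integrated by the Dini condition of Assumption \ref{assum:Main}, which is why this Dini hypothesis is essentially sharp for the method.
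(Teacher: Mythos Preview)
Your broad outline---bound $\Phi_{x_0}(t)$ uniformly via almost-monotonicity, then convert to a Lipschitz bound---matches the paper. But your ``classical deduction'' in the final step hides a real difficulty. The bound $\Phi_{x_0}(t)\leq C$ controls only the \emph{product} of the two Dirichlet integrals; it says nothing about either factor alone, so it does not directly give $u^\pm(y)\leq C|y-x_0|$. What is needed is a coupling argument: if $u(z)=M$ is large with $\mathrm{dist}(z,\{u=0\})=1$, then minimality (through the bound $v(x_0)\leq C$ for the harmonic replacement $v$ on a suitable domain) forces $u^-$ also to be of order $M$ somewhere, after which pointwise harmonic-measure estimates yield lower bounds on \emph{both} $\int|\nabla u^\pm|^2$; multiplying gives $\Phi\geq cM^4$, contradicting $\Phi\leq C$. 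In the interior case ACF run this on a ball, but near $\partial\Omega$ no such ball exists. The paper's Section~\ref{sec:Lipschitz} builds (Proposition~\ref{prop:L1-harmonic}) an auxiliary convex subdomain $\Omega_{x_0}$ whose boundary splits into a Neumann piece on $\partial\Omega$ and a transversal Dirichlet piece on which the mixed Green's function satisfies $\|\nabla G\|_{L^\infty}\leq C$; this is the substitute for the ball, and its construction again uses the Dini condition via the transversality of Lemma~\ref{lem:transverse}. Your proposal omits both the coupling step and this boundary construction, so as written it does not explain how the product bound on $\Phi$ becomes a bound on each phase.

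A smaller point: the almost-monotonicity is not in the clean exponential form you state. The boundary correction $A^\pm(t)$ (Definition~\ref{defn:A-plus}) is \emph{not} pointwise bounded by $CM_{x_0}(t)/t$; the paper only shows $\int A^\pm(t)\,dt/t$ is finite after restricting to a good dyadic set $J_1$ and using an oscillation-decay lemma (Lemma~\ref{lem:harmonic-improved}) that exploits the alternative ``either one phase is thin, so $\Phi$ increases, or both phases are thick, so $A^\pm$ improves''. This is Propositions~\ref{prop:Main1} and~\ref{prop:Main3}, and it is more delicate than a single multiplicative error of size $M_{x_0}(s)/s$.
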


To prove Theorem \ref{thm:Lipschitz}  
a key ingredient is to establish an almost-monotonicity formula for
 the ACF functional of \cite{ACF} restricted to the
 convex domain.  
More precisely, let $x \in {\Omega}\cap B_1$,
and let  $u = u^+-u^-$, with $u^{\pm}\geq0$.   For 
$0 < t \leq 1$, we then define the function $\Phi(t)$ by
\begin{align} \label{eqn:Phi}
\Phi(t) = \left(\frac{1}{t^2}\int_{B_t(x) \cap\Omega}\frac{\left|\nabla u^{+}(y-x)\right|^2}{|y-x|^{n-2}} \ud y \right)\cdot  \left(\frac{1}{t^2}\int_{B_t(x)\cap\Omega}\frac{\left|\nabla u^{-}(y-{x})\right|^2}{|y-x|^{n-2}} \ud y \right).
\end{align}
A key step in the proof of Theorem \ref{thm:Lipschitz} is  a uniform upper bound on $\Phi(t)$.
\begin{thm} \label{thm:Phi}
Under \emph{Assumption \ref{assum:Main}}, we have
\begin{align*}
\Phi(t) \leq C
\end{align*} 
for $0<t\leq 1$, with a constant $C$ depending only on  the $C^{\epsilon}$-norm of $u$, the Lipschitz constant of $\pa\Omega$, and the constant $C_{*}$ appearing in the Dini condition. 
\end{thm}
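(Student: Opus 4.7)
The plan is to follow the structure of the Alt--Caffarelli--Friedman proof of \cite{ACF} of monotonicity of $\Phi$, reducing Theorem \ref{thm:Phi} to a differential inequality for $\log\Phi$. The two modifications relative to the interior case are that $\pa B_t(x) \cap \Om$ is no longer a full sphere, and that the classical Friedland--Hayman inequality must be replaced by the Neumann-type version on geodesically convex subsets of $\mS^{n-1}$. Both modifications introduce error terms, which will be absorbed by the Dini condition of Assumption \ref{assum:Main}.

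First, write $\Phi(t) = t^{-4} I_+(t) I_-(t)$ with
\[
I_\pm(t) = \int_{B_t(x)\cap\Om} \frac{|\nabla u^\pm(y)|^2}{|y-x|^{n-2}}\,\ud y,
\]
so $(\log\Phi)'(t) = -4/t + I_+'(t)/I_+(t) + I_-'(t)/I_-(t)$, and by the coarea formula $I_\pm'(t) = t^{-(n-2)} \int_{\pa B_t(x)\cap\Om} |\nabla u^\pm|^2\,d\sigma$. Using $\Delta u^\pm = 0$ in $\{u^\pm>0\}$ and the Neumann condition on $\pa\Om$, I would apply a Rellich-type identity against the vector field $y-x$ on $B_t(x)\cap\Om\cap\{u^\pm>0\}$. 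When $\Om$ is literally a cone at $x$, the resulting integral on $\pa\Om\cap B_t(x)$ vanishes identically since $(y-x)\cdot\nu(y)\equiv 0$ on a cone, and one recovers the classical ACF lower bound
\[
\frac{I_\pm'(t)}{I_\pm(t)} \geq \frac{2\gamma_\pm(t)}{t}, \qquad \gamma_\pm(t) = \tfrac12\Bigl(-(n-2)+\sqrt{(n-2)^2+4\la_\pm(t)}\Bigr),
\]
where $\la_\pm(t)$ is the first mixed eigenvalue on the rescaled spherical slice $\Sigma^\pm_t := t^{-1}\bigl(\{u^\pm>0\}\cap\pa B_t(x)\cap\Om - x\bigr)\subset \mS^{n-1}$, with Dirichlet condition on the free-boundary trace and Neumann condition on the trace of $\pa\Om$. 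In the actual convex case, the Rellich boundary integral on $\pa\Om\cap B_t(x)$ is controlled by $M_x(t)$ times an $L^2$ norm of $\nabla u^\pm$, contributing an additive error of order $M_x(t)/t^2$ in $(\log\Phi)'(t)$.

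Next, I would invoke the Neumann Friedland--Hayman inequality proved in Sections \ref{sec:FH1} and \ref{sec:FH2}: for any complementary pair of subsets of a geodesically convex $K\subset\mS^{n-1}$, with Neumann data on $\pa K$, one has $\gamma_+ + \gamma_- \geq 2$. The rescaled domain $t^{-1}(\Om - x)\cap \pa B_1$ containing $\Sigma^\pm_t$ differs from the tangent-cone slice $C_x\cap\mS^{n-1}$ (geodesically convex by convexity of $\Om$) by a deformation of size at most $M_x(t)/t$, so the transplantation of Section \ref{sec:FH2} gives $\gamma_+(t) + \gamma_-(t) \geq 2 - CM_x(t)/t$. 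Combining with the error analysis of the previous paragraph yields
\[
\frac{d}{dt}\log\Phi(t) \geq -C\,\frac{M_x(t)}{t^2},
\]
and integrating from $t$ to $1$ together with $\int_0^1 M_x(s)/s^2\,\ud s \leq C_*$ produces $\log\Phi(t) \leq \log\Phi(1) + CC_*$.

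Finally, the base-point estimate $\Phi(1) \leq C$ follows from the $C^\eps$-bound on $u$: at any free-boundary point $y_0$ we have $u^\pm(y_0)=0$, and a Caccioppoli inequality yields $\int_{B_r(y_0)\cap\Om}|\nabla u^\pm|^2 \leq Cr^{n-2+2\eps}$; summing over a dyadic decomposition in $|y-x|$ then gives $I_\pm(1) \leq C$, while in the opposite case where $x$ is separated from one of the phases the corresponding $I_\pm(t)$ vanishes for small $t$ and the bound is trivial. The main obstacle is the sharp quantitative matching of the two error sources---the Rellich boundary integral on $\pa\Om\cap B_t(x)$ and the error in the transplanted Friedland--Hayman inequality---so that both contribute at the same scale $M_x(t)/t$ and the single Dini bound of Assumption \ref{assum:Main} suffices to absorb them. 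The transplantation step in particular requires approximating non-smooth geodesically convex spherical slices by smooth strictly convex sets, which is relegated to the Appendix.
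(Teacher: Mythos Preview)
Your outline captures the overall skeleton correctly, and in particular your handling of the Friedland--Hayman error and of $\Phi(1)$ matches the paper. The genuine gap is in your treatment of the boundary term on $\pa\Om\cap B_t(x)$.

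First, a Rellich identity against $y-x$ produces a boundary integral of $|\nabla u^\pm|^2\,(y-x)\cdot\nu(y)$ on $\pa\Om$, and you have no a priori control on $\int_{\pa\Om\cap B_t}|\nabla u^\pm|^2\,d\sigma$. The paper avoids this by instead integrating $\Delta\bigl((u^\pm)^2\bigr)/|y-x|^{n-2}$ by parts twice; the resulting $\pa\Om$ term is $\int_{\pa\Om\cap B_t}(u^\pm)^2\,\nu\cdot(y-x)/|y-x|^n\,d\sigma$, involving $(u^\pm)^2$ rather than $|\nabla u^\pm|^2$, which can be dominated via the maximum principle and harmonic measure estimates (Propositions~\ref{prop:harmonic} and \ref{prop:harmonic2}).

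Second, and more seriously, even with this correct boundary term the error in $(\log\Phi)'(t)$ is not simply $O(M_x(t)/t^2)$. The weight $|y-x|^{-n}$ collects contributions from all scales $s\le t$, and the paper's bound \eqref{eqn:harmonic-5} reads
\[
A^\pm(t)\;\le\;C\Bigl(t^{-1}M_x(t)+\int_0^t s^{-2}M_x(s)\,ds\Bigr).
\]
Integrating the second term against $dt/t$ yields $\int_0^1 s^{-2}M_x(s)\log(1/s)\,ds$, which the Dini hypothesis does \emph{not} control. The paper circumvents this with a genuinely new dichotomy (Definition~\ref{defn:dyadic}, Proposition~\ref{prop:Main3}, Lemma~\ref{lem:harmonic-improved}): on dyadic intervals $I_j$ where $\Phi$ fails to increase, both spherical slices $U_t^\pm$ must have measure bounded below, forcing geometric decay $(u^\pm)^2\le Cc_0^{K(s)}$ as $s\to0$ and replacing $\int_0^t s^{-2}M_x(s)\,ds$ by a geometrically damped version that \emph{is} summable under the Dini condition; on the remaining intervals $\Phi$ is already increasing and no estimate is needed. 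Without this dichotomy your differential inequality does not close.
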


One important ingredient of the proof of  Theorem \ref{thm:Phi} is 
the appropriate variant of the Friedland-Hayman inequality, which we now
state.  
\begin{thm} \label{thm:FrHa}  Let $\Ga$ be an open convex cone in $\R^n$.  Suppose that $\Ga_i$,
$i = 1,\, 2$, are open cones such that $\Ga_1 \cap \Ga_2 = \emptyset$,
$\Ga_1 \cup \Ga_2 \subset \Ga$. Let 
$u_i$, $i= 1, \, 2$, be the (unique up to positive multiples)
positive, harmonic functions on $\Ga_i$ 
satisfying mixed boundary conditions, 
$u_i = 0 $ on $\Ga \cap \pa\Ga_i$ and $\pa_\nu u_i = 0$ on $\pa \Ga \cap \pa \Ga_i$.
If the degree of homogeneity of $u_i$ is $\alpha_i$, then
\[
\alpha_1 + \alpha_2 \ge 2.
\]
\end{thm}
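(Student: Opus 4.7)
The plan is to rephrase Theorem \ref{thm:FrHa} as a spherical eigenvalue inequality. Setting $D := \Ga \cap \mS^{n-1}$ and $D_i := \Ga_i \cap \mS^{n-1}$, and writing $u_i(r\theta) = r^{\al_i}\phi_i(\theta)$, the function $\phi_i$ is the positive first eigenfunction of the spherical Laplacian on $D_i$ satisfying Dirichlet conditions on the interior boundary $D \cap \pa D_i$ and Neumann conditions on $\pa D \cap \pa D_i$, with eigenvalue $\la_i = \al_i(\al_i + n - 2)$. Convexity of $\Ga$ translates to $D$ being a geodesically convex subset of $\mS^{n-1}$ whose boundary $\pa D$ has non-negative second fundamental form, and the $D_i$ are disjoint geodesically convex subsets of $D$. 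Letting $\al(\la)$ denote the positive root of $\al(\al + n-2) = \la$, we must show $\al(\la_1) + \al(\la_2) \ge 2$.

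To remove the Neumann boundary from the picture, I would reflect $D$ across $\pa D$ to form its geometric double $\hat D$. Because $\pa D$ is convex in $\mS^{n-1}$, this double is a closed $(n-1)$-dimensional metric measure space whose Ricci curvature is bounded below by $n-2$, either as a classical pointwise bound after smooth strictly convex approximation, or in the synthetic $\mathrm{CD}(n-2, n-1)$ sense. Under the reflection, each $\phi_i$ extends evenly across $\pa D \cap \pa D_i$ to a positive first Dirichlet eigenfunction, with the same eigenvalue $\la_i$, on the double $\hat D_i$ of $D_i$; positivity guarantees that this is the \emph{first} Dirichlet eigenvalue. Theorem \ref{thm:FrHa} is thereby reduced to a Friedland-Hayman inequality on closed $(n-1)$-dimensional manifolds with Ricci $\ge n-2$: for any two disjoint Dirichlet domains with first eigenvalues $\la_i$, one has $\al(\la_1) + \al(\la_2) \ge 2$.

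For that closed-manifold inequality I would adapt the standard Friedland-Hayman strategy. The Levy-Gromov isoperimetric inequality, which is precisely the place at which the Ricci lower bound enters, yields a Faber-Krahn comparison in which each $\la_i$ is bounded below by the first Dirichlet eigenvalue of a geodesic ball of the same volume on the model sphere $\mS^{n-1}$. The volume constraint $|\hat D_1| + |\hat D_2| \le |\mS^{n-1}|$, combined with the explicit monotone relationship between spherical-cap volume and the corresponding $\al$-exponent, then forces $\al(\la_1) + \al(\la_2) \ge 2$; equality is saturated on the round sphere by two antipodal hemispheres, each giving $\al = 1$.

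The principal obstacle is the lack of regularity: $\pa D$ need not even be $C^1$, so $\hat D$ carries a singular seam along $\pa D$, and the Dirichlet-Neumann eigenvalue problem on $D_i$ demands care near the corner $\pa D \cap D \cap \pa D_i$. I would handle this by first establishing the inequality when $D$ is $C^\infty$ and strictly geodesically convex, in which case $\hat D$ enjoys a classical pointwise Ricci lower bound of $n-2$ and the isoperimetric/eigenvalue comparisons apply cleanly, and then passing to the general case by the smooth strictly convex approximation of geodesically convex subsets of $\mS^{n-1}$ developed in the Appendix. A secondary subtlety is confirming stability of the Dirichlet-Neumann eigenvalues under this approximation so that the limiting $\al_i$ are the correct ones.
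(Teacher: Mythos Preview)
Your proposal is correct and follows essentially the same route as the paper: reformulate on the sphere, double across the convex boundary to eliminate the Neumann condition, invoke L\'evy--Gromov and the B\'erard--Meyer eigenvalue comparison to reduce to the classical Friedland--Hayman inequality, and handle non-smooth $D$ by smooth strictly convex approximation. One harmless slip: the $D_i$ need not be geodesically convex (only $D$ is), but this plays no role; the paper's main technical effort here is the explicit construction (Proposition~\ref{prop:doubling}) of a smooth doubled metric with $\mathrm{Ric}\ge(1-\eta)g$, which you correctly identify as the crux but gloss over.
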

We will prove this theorem in Section \ref{sec:FH1} in the equivalent
form on the sphere, Theorem \ref{thm:convex}.

\section{A differential inequality for the functional $\Phi(t)$} \label{sec:Phi}

The main differential inequality we will prove is as follows.
\begin{prop} \label{prop:Main}
Under \emph{Assumption \ref{assum:Main}}, there exists a constant $C_1$ depending only on the Lipschitz constant of $\pa\Omega$ and the constant $C_*$ in the Dini condition, such that for all $s\in(0,1)$, 
\begin{align*}
\int_{s}^1\frac{\Phi'(t)}{\Phi(t)}\ud t  \geq - C_1.
\end{align*}
\end{prop}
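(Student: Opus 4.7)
The plan is to establish the bound via a pointwise differential inequality for $\log\Phi$ that involves a Friedland-Hayman type eigenvalue sum plus a controlled error, and then integrate using Assumption \ref{assum:Main}. Writing $\Phi(t) = I^+(t) I^-(t)$ with
$$I^\pm(t) = \frac{1}{t^2}\int_{B_t(x)\cap\Omega}\frac{|\nabla u^\pm(y-x)|^2}{|y-x|^{n-2}}\,dy,$$
I would first differentiate each factor in $t$. After rescaling $z = (y-x)/t$ and applying the standard calculus of moving domains, $\tfrac{d}{dt}\log I^\pm$ becomes the difference between a spherical boundary integral on $\partial B_1$ and an interior integral. An integration by parts on $B_1 \cap \Omega_t$ (with $\Omega_t = (\Omega - x)/t$), exploiting $\Delta u^\pm \geq 0$ and the Neumann condition $\partial_\nu u = 0$ on $\partial\Omega$, should reduce the latter integral to an expression comparable to the Rayleigh quotient of the mixed Dirichlet-Neumann eigenvalue problem on the spherical slice $\Sigma^\pm_t := \partial B_1 \cap \Omega_t \cap \{u^\pm > 0\}$, with Dirichlet conditions on $\partial\{u^\pm>0\}$ and Neumann on $\partial\Omega_t$.

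More precisely, I expect to produce a pointwise bound of the form
$$\frac{\Phi'(t)}{\Phi(t)} \;\geq\; \frac{2}{t}\bigl[\beta(\lambda^+(t)) + \beta(\lambda^-(t)) - 2\bigr] \;-\; C\,\frac{M_x(t)}{t^2},$$
where $\lambda^\pm(t)$ is the first mixed eigenvalue on $\Sigma^\pm_t$ and $\beta(\lambda) = \sqrt{(n/2-1)^2+\lambda} - (n/2-1)$ converts eigenvalues into the corresponding degrees of homogeneity. The error $-C\,M_x(t)/t^2$ would collect two distinct contributions: the $\partial\Omega_t$-piece of the integration by parts fails to vanish exactly because the radial direction $z/|z|$ differs from the outward normal $\nu$ by an angle comparable to $M_x(t)/t$ (by Definition \ref{defn:N}); and the slice $\Sigma^\pm_t$ itself is not exactly geodesically convex in $\mathbb{S}^{n-1}$, with the convexity defect again controlled by $M_x(t)/t$.

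At this point I would invoke the transplanted Friedland-Hayman inequality established in Section \ref{sec:FH2}, which upgrades the exact inequality $\beta(\lambda^+) + \beta(\lambda^-) \geq 2$ on convex cones (Theorem \ref{thm:FrHa}) to an approximate version
$$\beta(\lambda^+(t)) + \beta(\lambda^-(t)) \;\geq\; 2 \;-\; C\,\frac{M_x(t)}{t}$$
valid for the nearly convex slices $\Sigma^\pm_t$. Combining with the previous display yields $\Phi'(t)/\Phi(t) \geq -C\,M_x(t)/t^2$, and integrating from $s$ to $1$ together with Assumption \ref{assum:Main} gives
$$\int_s^1 \frac{\Phi'(t)}{\Phi(t)}\,dt \;\geq\; -C\int_0^1 \frac{M_x(t)}{t^2}\,dt \;\geq\; -C\,C_*,$$
which is the desired bound.

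The main obstacle I anticipate is the careful bookkeeping of the two error contributions, and verifying that each genuinely scales as $M_x(t)/t$ on the sphere (hence $M_x(t)/t^2$ after the intrinsic factor $2/t$). The boundary term on $\partial\Omega_t$ must be expressed in terms of $|\nabla u^\pm|^2$ on the sphere so that it can be absorbed into the Rayleigh quotient that defines $\lambda^\pm(t)$, and the slice $\Sigma^\pm_t$ must be approximated by a truly geodesically convex set in a way compatible with both the Friedland-Hayman transplantation and the first-eigenvalue comparison. The specific exponent in $M_x(t)/t^2$ is exactly calibrated so that the Dini condition, which is the minimal quantitative strengthening of $M_x(t)/t \to 0$, yields an integrable error; this explains why Assumption \ref{assum:Main} is imposed in precisely this sharp form.
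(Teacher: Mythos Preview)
Your overall architecture is right---derive a differential inequality, invoke an approximate Friedland--Hayman inequality on the slices, and integrate using the Dini condition---but the claimed pointwise bound
\[
\frac{\Phi'(t)}{\Phi(t)} \;\geq\; \frac{2}{t}\bigl[\beta(\lambda^+(t)) + \beta(\lambda^-(t)) - 2\bigr] \;-\; C\,\frac{M_x(t)}{t^2}
\]
is not achievable, and this is the main obstruction the paper has to work around. The boundary term arising in the integration by parts is not supported on the spherical slice $\partial B_t \cap \partial\Omega$ but on the full solid piece $B_t \cap \partial\Omega$, and it carries the singular weight $|x|^{-n}$: explicitly it is
\[
\int_{B_t \cap \partial\Omega} (u^\pm)^2 \frac{\nu(x)\cdot x}{|x|^n}\, d\sigma.
\]
To control this you must bound $(u^\pm)^2$ on $\partial\Omega$ at \emph{every} radius $s<t$, not just at radius $t$. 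After normalizing by the spherical $L^2$ mass, the paper packages this as a ratio $A^\pm(t)$ (Definition~\ref{defn:A-plus}); using harmonic-measure estimates (Proposition~\ref{prop:harmonic2}) the best direct bound is
\[
A^\pm(t) \;\lesssim\; t^{-1}M_{\mathbf 0}(t) \;+\; \int_0^t s^{-2}M_{\mathbf 0}(s)\,ds,
\]
and the second term, when integrated against $dt/t$, produces $\int_0^1 s^{-2}M_{\mathbf 0}(s)\log(1/s)\,ds$, i.e.\ a log--Dini condition strictly stronger than Assumption~\ref{assum:Main}. Your hope of ``absorbing the boundary term into the Rayleigh quotient on the sphere'' cannot work because the term lives on a different piece of the boundary and at all smaller scales.

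The paper's remedy is not a sharper pointwise estimate but a structural dichotomy. One restricts to a ``good'' set $S_c$ of radii (Definition~\ref{defn:Sc}, Lemma~\ref{lem:Sc}) using the crude bound $\Phi'/\Phi \ge -4/t$ elsewhere, then performs a dyadic decomposition $S_c = \bigcup I_j$. On intervals $I_j$ where $\Phi$ increases there is nothing to prove. On intervals where $\Phi$ decreases, the differential inequality forces both slices $U_t^\pm$ to have substantial measure; this gives an exterior-cone condition and hence geometric decay of $u^\pm$ toward the origin (Lemma~\ref{lem:harmonic-improved}), which upgrades the bound on $A^\pm$ to one whose sum over those intervals \emph{is} controlled by the Dini integral (Proposition~\ref{prop:Main3}). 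This feedback between the monotonicity inequality and the decay of $u^\pm$ is the missing idea in your proposal.
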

Since $\Phi$ is non-negative and $\Phi(1)$ is bounded, Theorem \ref{thm:Phi} follows immediately from this proposition.

To prove this proposition, we follow the approach to estimating $\Phi'/\Phi$ 
of Alt, Caffarelli and Friedman \cite{ACF} in order to reduce the proof 
to our new version of the Friedland-Hayman inequality
for geodesically convex subsets of the sphere, Theorem \ref{thm:convex}.  When we carry
this out we find two major differences.
First of all, there is an additional boundary term $A^\pm(t)$ on $B_t \cap \partial \Omega$  (see Definition \ref{defn:A-plus}). 
Secondly,  because the spherical cross-section
$\Omega \cap \partial B_t$ need not be convex,
the corresponding eigenvalues $\lambda^\pm(t)$ (see Definitions \ref{defn:V} and \ref{defn:DN}) will only
satisfy an approximate form of the Friedland-Hayman inequality.
This means that additional estimates are required to establish
Proposition \ref{prop:Main}.   These estimates, Propositions \ref{prop:Main2}, \ref{prop:Main1}, and \ref{prop:Main3}, will be formulated in this section 
and proved in subsequent sections using Assumption \ref{assum:Main}.

To begin, for almost every $t\in(0,1)$, we have
\begin{align} \label{eqn:Phi-deriv1}
\frac{\Phi'(t)}{\Phi(t)} = \frac{\displaystyle \int_{\pa B_t\cap\Omega}\frac{ \left|\nabla u^{+}\right|^2}{|x|^{n-2}} \ud \sigma}{\displaystyle \int_{B_t\cap\Omega}\frac{ \left|\nabla u^{+}\right|^2}{|x|^{n-2}} \ud x} + \frac{\displaystyle  \int_{\pa B_t\cap\Omega}\frac{\left|\nabla u^{-}\right|^2}{|x|^{n-2}} \ud \sigma}{\displaystyle \int_{B_t\cap\Omega}\frac{\left|\nabla u^{-}\right|^2}{|x|^{n-2}} \ud x} - 4t^{-1}.
\end{align}
Next, we will show that a computation analogous to \cite{ACF},  yields
\begin{align} \label{eqn:Phi-deriv3}
\int_{B_t\cap\Omega} \frac{|\nabla u^{+}|^2}{|x|^{n-2}} \ud x & \leq  t^{-(n-2)}\int_{\pa B_t\cap \Omega}u^{+} \frac{x}{|x|}\cdot\nabla u^{+} \ud \sigma + \tfrac{n-2}{2}\left(1 + A^{+}(t)\right) t^{-(n-1)}\int_{\pa B_t\cap\Omega} (u^{+})^2 \ud \sigma,
\end{align}
along with an analogous inequality for $u^-$. The new term $A^+(t)$, not present in the calculation of \cite{ACF}, is defined as follows.
\begin{defn} \label{defn:A-plus}
For each $t\in(0,1)$ with $\int_{\pa B_t\cap\Omega} (u^{+})^2 \ud \sigma\neq0$, define $A^{+}(t)$ to be the quotient of boundary integrals given by
\begin{align*}
A^{+}(t) = \frac{ \displaystyle{\int_{B_t\cap\pa\Omega} (u^{+})^2\frac{\nu(x)\cdot x}{|x|^{n}} \ud \sigma}}{\displaystyle{t^{-(n-1)}\int_{\pa B_t\cap\Omega} (u^{+})^2 \ud \sigma}}.
\end{align*}
Formally extend $A^{+}(t)$ to be defined on $(0,1)$ by setting $A^{+}(t) =0$ whenever the numerator vanishes, and $A^{+}(t) = \infty$ if only the denominator vanishes. The function $A^{-}(t)$ is defined similarly.
\end{defn}

To prove \eqref{eqn:Phi-deriv3}, use the subharmonicity of the two phases $u^{\pm}$, the weak Neumann boundary condition, and integrating by parts, to obtain
\begin{align*}
\int_{B_t\cap\Omega} \frac{|\nabla u^{+}|^2}{|x|^{n-2}} \ud x & \leq \frac{1}{2}\int_{B_t\cap\Omega}\frac{\Delta\left((u^{+})^2\right)}{|x|^{n-2}} \ud x 
 = \frac1{t^{n-2}}\int_{\pa B_t\cap \Omega}u^{+} \frac{x}{|x|}\cdot\nabla u^{+} \ud \sigma + \tfrac{n-2}{2}\int_{B_t\cap\Omega}\frac{\nabla\left((u^{+})^2\right)\cdot x}{|x|^n} \ud x ,
\end{align*}
and  the similar inequality involving  $u^{-}$. Integrating by parts again in the last integral gives
\begin{align}  \label{eqn:Phi-deriv2}
\int_{B_t\cap\Omega} \frac{|\nabla u^{+}|^2}{|x|^{n-2}} \ud x  
\leq  
\frac1{t^{n-2}}\int_{\pa B_t\cap \Omega}u^{+} \frac{x}{|x|}\cdot\nabla u^{+} \ud \sigma 
+ 
\tfrac{n-2}{2t^{n-1}} \int_{\pa B_t\cap\Omega} (u^{+})^2 \ud \sigma 
 + \tfrac{n-2}{2} \int_{B_t\cap\pa\Omega} (u^{+})^2\frac{\nu(x)\cdot x}{|x|^{n}} \ud \sigma,
\end{align}
where we have used $\int_{B_t\cap\Omega} (u^{+})^2\Delta\left(|x|^{-(n-2)}\right) \ud x \le 0$.  Finally, rewriting \eqref{eqn:Phi-deriv2} in terms of $A^+(t)$,
we obtain \eqref{eqn:Phi-deriv3} as claimed.

Define $B^{+}(t)$ by
\begin{align} \label{eqn:B-plus}
B^{+}(t) = \frac{\displaystyle \int_{\pa B_t\cap\Omega} \left(\pa_{\rho} u^{+}\right)^2 + t^{-2}\left|\nabla_{\theta} u^{+}\right|^2 \ud \sigma }{\displaystyle \int_{\pa B_t\cap \Omega}u^{+}\pa_{\rho}u^{+} \ud \sigma + \tfrac{n-2}{2}\left(1 + A^{+}(t)\right) t^{-1}\int_{\pa B_t\cap\Omega} (u^{+})^2 \ud \sigma},
\end{align}
with $\rho$ the radial variable in spherical polar coordinates, and $\nabla_{\theta}$  the gradient on $\mS^{n-1}$. Inserting \eqref{eqn:Phi-deriv3} into the expression for $\Phi'(t)/\Phi(t)$ from  \eqref{eqn:Phi-deriv1} gives 
\begin{align} \label{eqn:Phi-deriv4}
\frac{\Phi'(t)}{\Phi(t)} & \geq B^{+}(t) + B^{-}(t) - 4t^{-1} .
\end{align}
To obtain a lower bound on \eqref{eqn:Phi-deriv4} we need to control some Dirichlet-Neumann eigenvalues on $\mS^{n-1}$.  We first make the following definitions:
\begin{defn}  \label{defn:V}
For each $t\in(0,1)$, let $V_{t}$ be the set in $\mS^{n-1}$ obtained from intersecting $\Omega$ with the sphere of radius $t$ centred at the origin, and then rescaling the resulting set to the unit sphere. That is,  the set $V_t$ is given by
\begin{align*}
V_{t} = t^{-1}\left(\Omega\cap t\mathbb{S}^{n-1}\right).
\end{align*}
\end{defn}
Note that since $\Omega$ is convex, for all $0<s<t<1$ we have the inclusion $V_t\subset V_{s}$, and we define the limiting domain as $V_0:=\bigcup_{t>0} V_t$. The sets $V_{t}\subset \mathbb{S}^{n-1}$ are not necessarily geodesically convex subsets of the sphere for $t>0$, but $V_0$ is geodesically convex. This is because the extension of $V_{0}$ to a cone with vertex at $\textbf{0}$ is a Euclidean convex subset of $\mathbb{R}^n$.  Denoting $u_t$ by $u_t(x) = u(tx)$  to be a rescaling of $u$, we define the sets
\begin{align} \label{eqn:Ui}
U_t^{+} = \{u_t>0\}\cap V_t, \quad U_t^{-} = \{u_t<0\}\cap V_t, \quad \gamma^{+}_t = \pa\{u_t>0\}\cap {V}_t, \quad \gamma^{-}_t = \pa\{u_t<0\} \cap {V}_t
\end{align} 
for each $t\in(0,1)$.  We thus have $ \bar{U}_t^{+}\cup\bar{U}_t^{-}\subset V_t$ for disjoint sets $U_t^{+}$, $U_t^{-}$, and the Dirichlet-Neumann eigenvalues of interest are the following:
\begin{defn} \label{defn:DN}
We define $\la^{+}(t)$ to be the first Dirichlet-Neumann eigenvalue for $U_{t}^{+}\subset\mS^{n-1}$, with Dirichlet boundary conditions on $\gamma^{+}_t$, and Neumann boundary conditions on the rest of $\pa U_{t}^{+}$. That is, 
\begin{align*}
\la^{+}(t) = \inf_{\phi\in X_t^{\pm}} \frac{\int_{U_t^{+}} \left|\nabla_g\phi\right|^2 \ud\sigma}{\int_{U_t^{+}} \phi^2\ud\sigma},
\end{align*}
where $g$ is the round metric on $\mS^{n-1}$. The set of functions $X^{+}_t$ is given by $X_t^{+} = \{\phi \in C^{\infty}(U_t^{+}) : \emph{supp}(\phi) \cap \gamma^{+}_t =\emptyset \}.$ 
 We also define the characteristic exponent $\al^{+}(t)$ to be the positive solution of \begin{align} \label{eqn:DN1} \al^{+}(t)^2 + (n-2)\al^{+}(t) -\la^{+}(t)=0, \qquad \al^{+}(t) = -\frac{n-2}{2}+\sqrt{\frac{(n-2)^2}{4} + \la^{+}(t)} . 
 \end{align}
 The quantities $\la^{-}(t)$ and $\al^{-}(t)$ are defined likewise. 
\begin{rem} \label{rem:char1}
If $v^{+}_{t}(x)$ is the corresponding first Dirichlet-Neumann eigenfunction for $U_t^{+}$, the characteristic exponent $\al^{+}(t)>0$ is the homogeneity of the unique homogeneous harmonic extension of $v^{+}_t(x)$ to the cone generated by $U_t^{+}$ with vertex at $\emph{\textbf{0}}$. 
\end{rem}
\end{defn}
Next, we follow the same method as in the proof of Lemma 5.1 in \cite{ACF} to obtain lower bounds on $B^\pm(t)$.  It suffices, by homogeneity, to consider the case $t = 1$.   Setting
\[
z = \int_{\Omega \cap \pa B_1} (\pa_\rho u^+)^2 \, d\sigma,
\qquad
w = \int_{\Omega \cap \pa B_1} \left|\nabla_\theta  u^+\right|^2 \, d\sigma,
\qquad s = \sqrt{z/w},
\]
we claim that 
\begin{equation}\label{eq:Bplus1}
B^+(1) \ge \frac{ z + w}{\sqrt{\frac{zw}{\lambda} }+ \frac{aw}\lambda}
= \sqrt{\lambda}\frac{s^2 + 1}{s + \frac{a}{\sqrt\lambda}} \, ,
\end{equation}
with 
\[
a = \frac{n-2}2 \left(1 + A^+(1)\right), \qquad \lambda = \lambda^+(1).
\]
Indeed, the numerator of $B^+(1)$ is $z + w$ and the denominator
is bounded above by $\sqrt{zw/\lambda} + aw/\lambda$ using the Cauchy-Schwarz
inequality and the eigenvalue bound
\[
\frac{w}{\lambda} \ge \int_{\Omega \cap \pa B_1} (u^+)^2 \, d\sigma \,. 
\]
Next, using routine calculus or by completing the square, one finds
that 
\[
\sqrt{\lambda}\, (s^2 + 1) \ge 2\beta \left(s + \frac{a}{\sqrt\lambda}\right),
\]
with $ \beta = -a + \sqrt{a^2 + \lambda}$, the positive root of 
$\beta^2 + 2a \beta - \lambda   = 0$.   Thus, using \eqref{eq:Bplus1}, and reintroducing the radius $t$,  we have the following lower bounds.
\begin{prop} \label{prop:Main-half}  For each $t\in(0,1)$, 
let $\beta^+(t)$ be 
positive root of the equation
\begin{align} \label{eqn:Phi-deriv6}
\beta^{+}(t)^2 + (n-2)(1+A^{+}(t))\beta^{+}(t)  - \lambda^{+}(t) = 0,
\end{align}
with $\beta^+(t)= 0$ when $A^+(t) = \infty$, and similarly for $\beta^-(t)$.  Then 
\begin{align} \label{eqn:Phi-deriv6a}
B^\pm(t) \ge 2 t^{-1} \beta^\pm(t) \quad \mbox{and} \qquad
\frac{\Phi'(t)}{\Phi(t)} \geq 2t^{-1}\left(\beta^{+}(t) + \beta^{-}(t) - 2\right),
\end{align} 
In particular $\Phi'(t)/\Phi(t)\geq -4t^{-1}$ for all $t\in(0,1)$.  
\end{prop}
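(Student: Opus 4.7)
The plan is to verify that Proposition \ref{prop:Main-half} is essentially a packaging of the Cauchy--Schwarz plus optimization computation already sketched in the lead-up to its statement, and then to combine it with \eqref{eqn:Phi-deriv4}. By homogeneity in the rescaling $x \mapsto tx$, the ratio $B^{\pm}(t)$ transforms as $B^{\pm}(t) = t^{-1}\tilde B^{\pm}$, where $\tilde B^{\pm}$ is the analogous quantity computed for $u_t(x) = u(tx)$ on the unit scale; moreover, the eigenvalue $\la^{+}(t)$ and the ratio $A^{+}(t)$ are dimensionless and are precisely the data of the unit-scale problem. Thus the key is to prove $\tilde B^{+} \ge 2\beta^{+}(t)$ for the rescaled quantities, and the proposition follows.

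At the unit scale, introduce $z = \int_{\Omega\cap \pa B_1} (\pa_\rho u^+)^2 \ud\sigma$ and $w = \int_{\Omega\cap \pa B_1} |\nabla_\theta u^+|^2 \ud\sigma$, so that the numerator of $B^+(1)$ in \eqref{eqn:B-plus} is exactly $z + w$. For the denominator I would first bound the mixed term using Cauchy--Schwarz, $\int u^+ \pa_\rho u^+ \ud\sigma \le \sqrt{z}\cdot \sqrt{\int (u^+)^2}$, and then use the Rayleigh quotient characterization of $\la^+(1)$ from Definition \ref{defn:DN} to get $\int_{\Omega\cap \pa B_1} (u^+)^2 \ud\sigma \le w/\la^+(1)$. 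Here one should check that $u^+$ restricted to $U_1^+$ is an admissible test function, i.e.\ has the correct vanishing on $\gamma^+_1$ and no boundary condition on the spherical portion of $\pa U_1^+ \cap \pa\Omega$; this follows from the weak Neumann condition iii) in the bulleted list. Combining these bounds yields \eqref{eq:Bplus1} with $a = \tfrac{n-2}{2}(1 + A^+(1))$ and $\la = \la^+(1)$.

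It remains to minimize the right side of \eqref{eq:Bplus1} in $s \ge 0$. Writing $f(s) = \sqrt{\la}(s^2+1)/(s + a/\sqrt{\la})$, a short calculation (or completion of the square in $\sqrt{\la}(s^2+1) - 2\beta(s + a/\sqrt{\la})$) shows the minimum equals $2\beta$ precisely when $\beta$ is the positive root of $\beta^2 + 2a\beta - \la = 0$. Substituting $2a = (n-2)(1+A^+(1))$ recovers \eqref{eqn:Phi-deriv6}, giving $B^+(1) \ge 2\beta^+(1)$. Rescaling yields the first inequality in \eqref{eqn:Phi-deriv6a} for all $t$, and plugging into \eqref{eqn:Phi-deriv4} gives the second. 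The final crude bound $\Phi'/\Phi \ge -4t^{-1}$ is then immediate from $\beta^\pm(t) \ge 0$, which holds since $\la^\pm(t)\ge 0$ forces the positive root of the quadratic to be nonnegative.

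The main bookkeeping hurdle is the degenerate cases. If $\int_{\pa B_t\cap \Omega} (u^+)^2 \ud\sigma = 0$ then $u^+ \equiv 0$ on $\pa B_t\cap \Omega$ and the estimate is trivial; if $A^+(t) = \infty$, the convention $\beta^+(t) = 0$ makes the claim vacuous (and in this situation $u^- \equiv 0$ near $\pa B_t$, so $\Phi'(t)/\Phi(t)$ is well behaved via the other phase). The slightly subtle case is when $w = 0$ but $z \neq 0$, handled directly since the denominator of $B^+$ then reduces to a single term controlled by Cauchy--Schwarz. With these special cases dispatched, the argument above goes through at almost every $t$, which is what is needed.
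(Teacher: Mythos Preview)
Your argument is correct and follows exactly the approach the paper sets up in the paragraphs preceding the proposition: Cauchy--Schwarz on the cross term, the Rayleigh-quotient bound $\int (u^+)^2 \le w/\lambda$, and the elementary optimization $\sqrt{\lambda}(s^2+1) \ge 2\beta(s+a/\sqrt{\lambda})$ with $\beta$ the positive root of $\beta^2+2a\beta-\lambda=0$, then rescaling and plugging into \eqref{eqn:Phi-deriv4}. Two small corrections: admissibility of $u^+$ in the Rayleigh quotient for $\lambda^+(t)$ comes simply from $u^+$ vanishing on $\gamma_t^+$ by its definition, not from the Neumann condition iii); and in your degenerate-case discussion, $A^+(t)=\infty$ means $u^+\equiv 0$ on $\partial B_t\cap\Omega$ (not $u^-$), which then forces $u^+\equiv 0$ in $B_t\cap\Omega$ via \eqref{eqn:Phi-deriv3} and makes $\Phi(t)=0$.
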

To prove Proposition \ref{prop:Main}, we now want a lower bound on $\beta^{+}(t)$ and $\beta^{-}(t)$.  
Comparing the equations in \eqref{eqn:DN1} and \eqref{eqn:Phi-deriv6}, we have
\begin{align} \label{eqn:Phi-deriv7}
\left|\alpha^{+}(t)-\beta^{+}(t)\right| \leq CA^{+}(t).
\end{align}
Therefore, a key ingredient in the proof of Proposition \ref{prop:Main} (and hence Theorem \ref{thm:Phi}) is to establish a Friedland-Hayman type inequality \cite{FH}, for the characteristic exponents $\al^{+}(t)$, $\al^{-}(t)$ from Definition \ref{defn:DN}:
\begin{prop} \label{prop:Main2}
Provided \emph{Assumption \ref{assum:Main}}, the Dini condition, holds, the characteristic exponents $\al^{+}(t)$, $\al^{-}(t)$ from Definition \ref{defn:DN} satisfy 
\begin{align*}
\int_0^1 \frac{\left[2-\al^{+}(t) - \al^{-}(t)\right]_{+}}{t} \ud t \leq C_2,
\end{align*}
for a constant $C_2$ depending on the Lipschitz norm of $\pa\Omega$, and the constant $C_*$ in the Dini condition.  
\end{prop}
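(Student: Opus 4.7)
The plan is to prove the pointwise bound
\[
[2 - \alpha^+(t) - \alpha^-(t)]_+ \le C\,\frac{M_x(t)}{t}\qquad \text{for all } t \in (0,1),
\]
after which integrating against $dt/t$ and invoking Assumption \ref{assum:Main} yields
\[
\int_0^1 \frac{[2-\alpha^+(t)-\alpha^-(t)]_+}{t}\,dt \le C\int_0^1 \frac{M_x(t)}{t^2}\,dt \le CC_*.
\]
Note that the $1/t$ weight in the target integral matches precisely the $1/t^2$ weight in the Dini condition, so the correct strength of the pointwise bound is linear in $M_x(t)/t$; anything weaker, such as a square-root rate, would require a strictly stronger hypothesis.

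To establish the pointwise bound I would compare the Dirichlet-Neumann eigenvalues $\lambda^\pm(t)$ on $U_t^\pm \subset V_t$ to those on a geodesically convex comparison set $\tilde V_t$. The natural choice is $\tilde V_t = V_0 \supset V_t$, which is geodesically convex; by the definition of $M_x$, the boundary $\partial V_t$ lies within angular distance $C M_x(t)/t$ of $\partial V_0$ on $\mathbb{S}^{n-1}$. Using the Lipschitz parametrization of the spherical slices developed in the appendix, the plan is to construct a bi-Lipschitz diffeomorphism $\Phi_t : V_0 \to V_t$ with $\Phi_t(\partial V_0) = \partial V_t$ and
\[
\|D\Phi_t - I\|_{L^\infty(V_0)} \le C\,\frac{M_x(t)}{t}.
\]

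Pull back the two-phase data via $\Phi_t$, setting $\hat U_t^\pm := \Phi_t^{-1}(U_t^\pm)$ and $\hat\gamma_t^\pm := \Phi_t^{-1}(\gamma_t^\pm)$. These are disjoint open subsets of $V_0$ with interior Dirichlet boundaries. Since $V_0$ is geodesically convex, Theorem \ref{thm:FrHa} applies on the Euclidean cone over $V_0$ and yields $\hat \alpha^+(t) + \hat \alpha^-(t) \ge 2$ for the corresponding characteristic exponents. The change-of-variables formula for the Rayleigh quotient, combined with the $C^1$ closeness of $\Phi_t$ to the identity, produces the linear eigenvalue stability $|\lambda^\pm(t) - \hat\lambda^\pm(t)| \le C (M_x(t)/t)(1 + \lambda^\pm(t))$, which via \eqref{eqn:DN1} converts into $|\alpha^\pm(t) - \hat\alpha^\pm(t)| \le C M_x(t)/t$. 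Adding the two sides and using the Friedland-Hayman bound on $\hat\alpha^\pm$ gives the desired pointwise estimate.

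The principal obstacle is constructing the diffeomorphism $\Phi_t$ with the \emph{linear} rate $M_x(t)/t$. A naive strip-flattening construction, or a direct extension of an eigenfunction across the Dirichlet boundary by cutoff, would typically only give a square-root rate, which would require replacing Assumption \ref{assum:Main} by a non-trivially stronger Dini hypothesis. The sharp linear rate relies on the fact that $\partial V_t$ is a Lipschitz graph over $\partial V_0$ with graphing function of size $O(M_x(t)/t)$, using that the tangent hyperplane to $\partial\Omega$ rotates in a way controlled by $M_x$ (not merely by the Lipschitz constant). A secondary technical point is that the Dirichlet boundaries $\gamma_t^\pm$ may terminate on $\partial V_t$ rather than strictly inside, and the same $\Phi_t$ must respect this interface so that $\hat\gamma_t^\pm$ forms an admissible Dirichlet set in $V_0$. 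Once this diffeomorphism is in hand, the linear eigenvalue comparison and the integration against the Dini weight are routine.
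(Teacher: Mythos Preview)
Your overall strategy---prove the pointwise bound $[2-\alpha^+(t)-\alpha^-(t)]_+ \le C\,M_x(t)/t$ and integrate---is exactly what the paper does (this is its Proposition~\ref{prop:eigenvalue-approx}). But your choice of comparison set $V_0$ introduces a genuine gap.

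You assert that $\partial V_t$ lies within angular distance $C\,M_x(t)/t$ of $\partial V_0$. This is false in general. Writing $\partial\Omega$ locally as $x_n=f(x')$ with $f(0)=0$, the angular displacement between $\partial V_t$ and $\partial V_0$ along a fixed direction $\theta$ is controlled by $f(t,\theta)/t - \lim_{s\to0}f(s,\theta)/s = \int_0^t N(s,\theta)/s^2\,ds$, where $N(s,\theta)\sim M_x(s)$; see Definition~\ref{defn:N-defn} and the computation in the proof of Proposition~\ref{prop:Dini-average}. Thus $\mathrm{Haus}_{\mathbb S^{n-1}}(V_t,V_0)$ is governed by $\int_0^t M_x(s)/s^2\,ds$, not by $M_x(t)/t$. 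For instance, if $M_x(s)\sim s/\log^2(1/s)$ (which satisfies Assumption~\ref{assum:Main}), then $M_x(t)/t\sim 1/\log^2(1/t)$ but $\mathrm{Haus}(V_t,V_0)\sim 1/\log(1/t)$; integrating the latter against $dt/t$ diverges. So your diffeomorphism $\Phi_t$ cannot have $\|D\Phi_t-I\|\le C\,M_x(t)/t$, and the resulting eigenvalue comparison would only yield the bound with an extra $\log(1/t)$ factor, requiring a strictly stronger Dini hypothesis than the paper assumes.

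The paper circumvents this by comparing not to the limit cone $V_0$ but to a geodesically convex set $W_t\subset V_t$ constructed \emph{at scale $t$}: one shifts the graph of $V_t$ vertically by $\delta(t)=\sup_{x'}\bigl(r\partial_r g_t(x')-g_t(x')\bigr)$ (Proposition~\ref{prop:convex-approx}), and this $\delta(t)$ is directly bounded by $C\,t^{-1}M_{\mathbf 0}(4t)$ because $r\partial_r g_t-g_t$ is exactly $\nu\cdot x$ up to a Lipschitz factor. The eigenvalue comparison is then done by simple restriction of the eigenfunction from $U_t^\pm$ to $W_t^\pm$ (Lemma~\ref{lem:eigenvalue-lower}), which gives the linear rate $\mu(W_t^\pm)\le\lambda^\pm(t)(1+C\,\mathrm{Vol}(V_t\setminus W_t))$ without any diffeomorphism or graph extension. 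Your approach would be salvageable if you replaced $V_0$ by such a $W_t$, but then the diffeomorphism machinery becomes unnecessary---the one-sided inclusion $W_t\subset V_t$ and the restriction argument suffice.
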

We will prove Proposition \ref{prop:Main2} in Section \ref{sec:FH2}, using the lower bounds on Dirichlet-Neumann eigenvalues for geodesically convex subsets of the sphere established in Section \ref{sec:FH1}. Combining the estimates in Propositions \ref{prop:Main-half} and \ref{prop:Main2} with \eqref{eqn:Phi-deriv7}, if we could show that
\begin{align*}
\int_{0^+}^{1}\frac{A^{\pm}(t)}{t} \ud t
\end{align*}
is bounded then Proposition \ref{prop:Main} follows.  Instead, we 
will prove this for  the ``important" values of $t$ in Proposition \ref{prop:Main3}
and handle the exceptional values
differently. 
\begin{defn} \label{defn:Sc}  We  broaden the definition of $M_{\etb}(t)$ from \eqref{defn:N} to include the  case of base points $\etb \notin\pa\Omega$, by setting $M_{\etb}(t) = 0$ for $0<t<\emph{dist}(\etb,\pa\Omega)$. For $c>0$ fixed, define the set $S_c$ by
\begin{align*}
S_{c} = \{t\in(0,1):s^{-1}M_{\etb}(s) \leq c \emph{ for all } s\in[\tfrac{1}{4}t,4t]\}.
\end{align*}
\end{defn}
Assumption \ref{assum:Main} controls the size of the set $(0,1)\backslash S_c$ 
in the following sense.
\begin{lem} \label{lem:Sc}
Under Assumption \ref{assum:Main}, there exists a constant $C$, depending only on $c$, and the constant $C_*$ in the Dini condiition such that
\begin{align*}
\int_{(0,1)\backslash S_c}t^{-1}\ud t < C.
\end{align*}
\end{lem}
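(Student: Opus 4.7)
The plan is to bound the "bad" set $(0,1)\setminus S_c$ by a superlevel set of $s\mapsto M_{\etb}(s)/s$, whose logarithmic size is controlled directly by Assumption~\ref{assum:Main}. The key observation is the monotonicity of $M_{\etb}$.

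\textbf{Step 1 (Monotonicity).} Since $M_{\etb}(t)$ is the supremum of $\nu(y)\cdot y$ taken over the set $\{y\in\pa\Omega : 0<|y|\le t\}$, which grows with $t$, the function $t\mapsto M_{\etb}(t)$ is non-decreasing on $(0,1)$.

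\textbf{Step 2 (Transferring the bad scale to a single point).} Suppose $t\in(0,1)\setminus S_c$. By Definition~\ref{defn:Sc}, there exists $s\in[\tfrac14 t,4t]$ with $M_{\etb}(s) > cs$. Since $s\ge t/4$, this gives $M_{\etb}(s) > ct/4$. By monotonicity and $s\le 4t$,
\[
M_{\etb}(4t) \;\ge\; M_{\etb}(s) \;>\; \tfrac{c}{4}\,t,
\]
and hence $(4t)^{-1} M_{\etb}(4t) > c/16$. Defining
\[
E \;:=\; \Bigl\{u\in(0,1): \tfrac{M_{\etb}(u)}{u} > \tfrac{c}{16}\Bigr\},
\]
we have shown $\{t\in(0,\tfrac14): t\notin S_c\} \subset \{t: 4t\in E\}$, while the trivial tail $[\tfrac14,1)$ contributes only $\log 4$ in the log-integral.

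\textbf{Step 3 (Change of variable and the Dini bound).} The substitution $u=4t$ preserves the measure $t^{-1}\ud t$, so
\[
\int_{\{t\in(0,1/4):\,t\notin S_c\}} \frac{\ud t}{t}
\;\le\; \int_{E} \frac{\ud u}{u}.
\]
On $E$ one has $u^{-1} \le (16/c)\,M_{\etb}(u)/u^2$, and consequently, using Assumption~\ref{assum:Main},
\[
\int_{E} \frac{\ud u}{u} \;\le\; \frac{16}{c}\int_{0^+}^{1}\frac{M_{\etb}(u)}{u^2}\,\ud u \;<\; \frac{16\,C_*}{c}.
\]
Adding the tail contribution yields the desired bound
\[
\int_{(0,1)\setminus S_c} \frac{\ud t}{t} \;<\; \frac{16\,C_*}{c} + \log 4,
\]
a constant depending only on $c$ and $C_*$.

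\textbf{Main obstacle.} There is no serious obstacle here: the crux is the observation in Step~1 that $M_{\etb}$ is monotone, which allows one to replace the variable scale $s\in[t/4,4t]$ by the single scale $4t$, turning a covering problem into a pointwise pullback under the Dini assumption. Without monotonicity, one would instead have to appeal to a Vitali-type covering argument (which also works but is less clean).
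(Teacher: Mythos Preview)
Your argument is essentially the paper's own: both exploit the monotonicity of $M_{\etb}$ to replace the floating scale $s\in[\tfrac14 t,4t]$ by a fixed one, and then bound the logarithmic measure of the bad set by the Dini integral $\int t^{-2}M_{\etb}(t)\,\ud t$. Your use of the scale $4t$ (giving $c/16$) is in fact slightly cleaner than the paper's, which passes to $t$ itself.

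There is, however, one step you have skipped that the paper addresses explicitly. Assumption~\ref{assum:Main} is stated only for base points $x\in\pa\Omega$, whereas in the setting of Definition~\ref{defn:Sc} the point $\etb$ may lie in the interior of $\Omega$ (this is exactly why the definition is broadened there). Before invoking the Dini bound in your Step~3, you therefore need to check that $\int_{0^+}^1 t^{-2}M_{\etb}(t)\,\ud t$ is still controlled by $C_*$. The paper does this in one line: if $z\in\pa\Omega$ is the nearest boundary point to $\etb$, then $\nu(y)\cdot y = \nu(y)\cdot(y-z) + z\cdot\nu(y)$ gives $M_{\etb}(t)\le M_z(2t)+|z|$ for $t\ge|z|$, while $M_{\etb}(t)=0$ for $t<|z|$; integrating against $t^{-2}$ and applying Assumption~\ref{assum:Main} at $z$ yields the required bound.
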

\begin{proof}{Lemma \ref{lem:Sc}}
We first show that the estimate in Assumption \ref{assum:Main} continues to hold for $\textbf{0}\notin \pa\Omega$: Let $z\in \pa\Omega$, with $z = \text{dist}(\textbf{0},\pa\Omega)$. Then, for any $y\in \pa\Omega$, we write
\begin{align*}
\nu(y)\cdot y = \nu(y)\cdot(y-z) + z\cdot \nu(y),
\end{align*}
so that $M_{\textbf{0}}(t) \leq M_{\textbf{z}}(2t) + |z|$ for $t\geq |z|$, and  $M_{\textbf{0}}(t) =0$ for $t<|z|$. By Assumption \ref{assum:Main} applied to $z\in\pa\Omega$, this ensures that $t^{-2}M_{\textbf{0}}(t)$ is integrable. Now suppose that $t\in (0,1)\backslash S_{c}$, so that $s^{-1}M_{\textbf{0}}(s)>c$ for some $s\in[\tfrac{t}{4},4t]$. Then, since $M_{\textbf{0}}(s)$ is increasing in $s$, we have $t^{-1}M_{\textbf{0}}(t)>\tfrac{c}{4}$. Therefore,
\begin{align*}
\frac{c}{4}\int_{(0,1)\backslash S_c}t^{-1} \ud t \leq \int_{(0,1)\backslash S_c}\frac{M_{\textbf{0}}(t)}{t^2}  \ud t
\end{align*}
Since the right hand side is bounded by $C^*$ by our assumption, this gives the desired estimate.
\end{proof}
Combining Lemma \ref{lem:Sc} with the estimate $\Phi'(t)/\Phi(t)\geq-4t^{-1}$ from Proposition \ref{prop:Main-half}, we see that to prove Proposition \ref{prop:Main} we can pick $c>0$ depending on the Lipschitz constant of $\Omega$ and restrict our estimates to $t\in S_c$. When $c$ is sufficiently small and $t\in S_c$, the spherical slices $V_t$ inherit some regularity properties allowing us to estimate $A^{+}(t)$ and $A^{-}(t)$.
\begin{prop} \label{prop:Main1}
There exists $c>0$, depending on the Lipschitz constant of $\pa\Omega$, such that under \emph{Assumption \ref{assum:Main}}, for all $t\in S_c$, the function $A^{+}(t)$ satisfies
\begin{align*}
A^{+}(t) \leq C_3,
\end{align*}
for a constant $C_3$ depending only on the Lipschitz constant of $\pa\Omega$ and the constant $C_*$ in the assumption.
\end{prop}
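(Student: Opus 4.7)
The plan is to bound the numerator of $A^+(t)$ by a constant multiple of the denominator, using the near-cone property guaranteed by $t \in S_c$ together with the Dini condition. Throughout the argument I will write $k(r) := \int_{V_r}u^+(r\omega)^2\, d\omega = r^{-(n-1)}\int_{\pa B_r \cap \Omega}(u^+)^2\, d\sigma$, so that the denominator of $A^+(t)$ equals $k(t)$.

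The first step is a polar decomposition of the boundary piece. Because $t \in S_c$ with $c$ small and $\pa\Omega$ is Lipschitz and convex, the slice $\pa\Omega \cap B_t$ admits a parametrization $(r,\omega) \mapsto r\eta(r,\omega)$ with $\omega \in \pa V_0 \subset \mS^{n-1}$, where $\eta$ is bi-Lipschitz, $\eta(r,\omega)$ is close to $\omega$, and the surface area density is comparable to $r^{n-2}\, dr\, d\mathcal{H}^{n-2}(\omega)$; the detailed construction of this parametrization is promised in the Appendix. Combined with the geometric bound $\nu(x) \cdot x \leq M_{\textbf{0}}(|x|)$ from Definition~\ref{defn:N}, this yields
\begin{align*}
\int_{B_t \cap \pa\Omega}(u^+)^2\, \tfrac{\nu \cdot x}{|x|^n}\, d\sigma \;\leq\; C \int_0^t \tfrac{M_{\textbf{0}}(r)}{r^2}\, h(r)\, dr,
\end{align*}
where $h(r) := \int_{\pa V_0}u^+(r\eta(r,\omega))^2\, d\mathcal{H}^{n-2}(\omega)$.

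The second step is a trace-and-growth estimate of the form $h(r) \leq C\,(r/t)^{2\eps}\,k(t)$ for some $\eps>0$ coming from the $C^\eps$ H\"older regularity of $u$. I will first bound $h(r)$ by $k(r')$ for some $r' \sim r$ via a trace inequality on the Lipschitz domain $V_r \subset \mS^{n-1}$ combined with a Caccioppoli estimate for the subharmonic function $u^+$ with Neumann conditions on $\pa\Omega$. Then I will use a growth bound for $k$: in the main case where the base point lies on the free boundary so that $u^+(\textbf{0}) = 0$, H\"older continuity together with an approximate monotonicity of $k(r)/r^{2\eps}$ (which is exact on a cone because $V_r$ is then $r$-independent) gives $k(r) \leq C(r/t)^{2\eps}k(t)$; in the easier case $u^+(\textbf{0}) > 0$, the function $u^+$ is bounded from above and below by positive constants near $\textbf{0}$ and the desired ratio is controlled directly by $C_*$ times the Lipschitz norm of $\pa\Omega$.

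Inserting this into the first step and invoking Assumption~\ref{assum:Main},
\begin{align*}
\int_0^t \tfrac{M_{\textbf{0}}(r)}{r^2}\, h(r)\, dr \;\leq\; C\,k(t) \int_0^t \tfrac{M_{\textbf{0}}(r)}{r^2}\,(r/t)^{2\eps}\, dr \;\leq\; C\,C_*\,k(t),
\end{align*}
which gives $A^+(t) \leq C\,C_*$. The main technical obstacle is the second step, and specifically the growth estimate $k(r) \leq C(r/t)^{2\eps}k(t)$: in the exact cone setting the quantity $k(r)/r^{2\eps}$ is monotone, but in the near-cone regime the slices $V_r$ vary with $r$, the trace constant on $V_r$ depends on its Lipschitz norm, and the monotonicity becomes only an almost-monotonicity. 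Choosing $c$ small in terms of the Lipschitz norm of $\pa\Omega$ controls the trace constant; the Dini condition absorbs the remaining almost-monotonicity defect.
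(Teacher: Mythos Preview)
Your approach differs substantially from the paper's, and the central growth estimate has a real gap.

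The step that fails is the claim $k(r)\le C(r/t)^{2\eps}k(t)$ via ``approximate monotonicity of $k(r)/r^{2\eps}$''. H\"older continuity with $u^+(\etb)=0$ gives only the one-sided bound $k(r)\le Cr^{2\eps}$; to turn that into $k(r)\le C(r/t)^{2\eps}k(t)$ you would need a \emph{lower} bound $k(t)\ge c t^{2\eps}$, which is simply unavailable (the free boundary could graze $\pa B_t$ and make $k(t)$ arbitrarily small). Even on an exact cone, subharmonicity of $(u^+)^2$ with Neumann conditions yields only that $k(r)$ itself is nondecreasing --- there is no mechanism producing the extra factor $r^{-2\eps}$. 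In the near-cone case the monotonicity of $k(r)$ acquires a boundary defect of size roughly $r^{-1}M_{\etb}(r)\,h(r)$ coming from the motion of $\pa V_r$, so you need $h(r)\lesssim k(r)$ before you can Gr\"onwall; but your trace step toward $h(r)\lesssim k(r')$ requires the slice bound $\int_{V_r}|\nabla_\theta u^+(r\omega)|^2\,d\omega \lesssim k(r)$, and a Caccioppoli inequality only gives a \emph{volume} gradient bound, not a bound on a single spherical slice. The argument is circular at this point.

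The paper sidesteps all of this. For $t\in S_c$ it uses the transversality of $\pa B_t$ and $\pa\Omega$ (Lemma~\ref{lem:transverse}) to invoke a reverse H\"older inequality for the mixed Dirichlet--Neumann harmonic measure (Proposition~\ref{prop:harmonic}, proved by a Rellich identity following Brown). This yields, via Proposition~\ref{prop:harmonic2} applied to the harmonic majorant $v^+\ge u^+$, the two estimates
\[
\sup_{B_{t/2}\cap\Omega}(u^+)^2 \le C\,k(t), \qquad \int_{B_t\cap\pa\Omega}(u^+)^2\,d\sigma \le C\,t^{\,n-1}k(t),
\]
with constants depending only on the Lipschitz norm. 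Splitting the numerator of $A^+(t)$ at radius $t/2$, the annulus piece uses the second bound together with $\nu(x)\cdot x\le M_{\etb}(t)$, while the inner piece uses the pointwise bound together with $\int_0^{t}s^{-2}M_{\etb}(s)\,ds\le C_*$. This gives $A^+(t)\le C\bigl(t^{-1}M_{\etb}(t)+\int_0^t s^{-2}M_{\etb}(s)\,ds\bigr)\le C(1+C_*)$ directly, with no trace inequality, no slice-level Caccioppoli, and no monotonicity of $k$.
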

We will prove Proposition \ref{prop:Main1} in Section \ref{sec:harmonic} by first obtaining estimates on harmonic measure for a domain with mixed Dirichlet-Neumann boundary conditions. However, inserting the estimate of $A^{\pm}(t)\leq C_3$ for $t\in S_c$ into \eqref{eqn:Phi-deriv7} does not give a strong enough lower bound on $\Phi'(t)/\Phi(t)$. To obtain an alternative estimate on $A^{\pm}(t)$, we use a dyadic decomposition. 
\begin{defn} \label{defn:dyadic}
Given $c>0$, write $S_c  = {\bigcup_{j\in J}I_j}$. Here $J = J(c)$ is a subset of the natural numbers, and $I_j = [t_j,t'_j)$ is the intersection of $[2^{-j},2^{-j+1})$ with $I$.
\end{defn}
Note that from Definition \ref{defn:Sc}, if $t\notin S_c$ then either $[t,2t]\subset (0,1)\backslash S_c$ or $[\tfrac{1}{2}t,t]\subset (0,1)\backslash S_c$, and so $I_j$ is guaranteed to be an interval. Using this decomposition, we will prove the following.
\begin{prop} \label{prop:Main3}
Let $J_1 = J_1(c)$ be the set of $j\in J(c)$ for which $\Phi(t_j) \geq \Phi({t'_{j}})$. Then, provided Assumption \ref{assum:Main}, the Dini condition, holds, there exist constants $c>0$ and $C_4$ such that
\begin{align*}
\sum_{j\in J_1(c)}\int_{I_j} \frac{A^{\pm}(t)}{t} \ud t \leq C_4.
\end{align*}
Here $c$ depends only on the Lipschitz constant, and $C_4$ additionally on the constant $C_*$ in the Dini condition. 
\end{prop}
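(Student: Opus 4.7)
The plan is to convert the boundary integral defining $A^{+}(t)$ into an interior integral via the divergence theorem, extract a monotone auxiliary quantity, and then sum dyadically over $J_1$ by invoking the hypothesis $\Phi(t_j) \ge \Phi(t_j')$ in combination with Propositions \ref{prop:Main1} and \ref{prop:Main2}. Since $x/|x|^n$ is divergence-free on $\R^n \setminus\{0\}$, applying the divergence theorem to $(u^+)^2 x/|x|^n$ on $B_t \cap \Om$ gives the identity
\begin{align*}
(1 + A^+(t))\, G^+(t) \; = \; N^+(t), \qquad N^+(t) := 2\int_{B_t \cap \Om} u^+\, \nabla u^+ \cdot \frac{x}{|x|^n}\, \ud x,
\end{align*}
where $G^+(t) := t^{-(n-1)}\int_{\pa B_t \cap \Om}(u^+)^2 \ud \sigma$ is the denominator of $A^+(t)$. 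The subharmonicity of $u^+$ together with the weak Neumann condition on $\pa \Om \cap \{u\neq 0\}$ gives $(N^+)'(t) = 2 t^{-(n-1)}\int_{\pa B_t \cap \Om} u^+ \pa_\rho u^+ \ud \sigma \ge 0$, so $N^+$ is monotone increasing. Hence for $t \in I_j = [t_j, t_j')$,
\begin{align*}
A^+(t) + 1 \; \le \; \frac{N^+(t_j')}{G^+(t)} \; = \; \bigl(1 + A^+(t_j')\bigr)\, \frac{G^+(t_j')}{G^+(t)}.
\end{align*}

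Next, on $I_j$ I would use the hypothesis $\Phi(t_j) \ge \Phi(t_j')$ together with the differential inequality \eqref{eqn:Phi-deriv6a}, the quadratic \eqref{eqn:Phi-deriv6} relating $\beta^+$ to $\lambda^+$ and $A^+$, and the relation $|\alpha^{\pm}-\beta^{\pm}| \le C A^{\pm}$ to lower-bound $G^+(t)/G^+(t_j')$ by the natural homogeneous factor $(t/t_j')^{2\alpha^+(t_j')}$ up to error terms controlled by $\int_{I_j}[2-\alpha^+-\alpha^-]_+/t\,\ud t$ and $\int_{I_j} A^-(t)/t\,\ud t$. Combined with the pointwise bound $A^+(t_j') \le C_3$ from Proposition \ref{prop:Main1}, integrating and using $t \in [t_j,2 t_j]$ produces a per-interval estimate of the form
\begin{align*}
\int_{I_j}\frac{A^+(t)}{t}\,\ud t \; \le \; C\int_{I_j}\frac{[2-\alpha^+-\alpha^-]_+}{t}\,\ud t + C\,\frac{M_{\etb}(2t_j)}{t_j} + \tfrac{1}{2}\int_{I_j}\frac{A^-(t)}{t}\,\ud t,
\end{align*}
and by symmetry the analogous bound holds with $A^+$ and $A^-$ swapped.

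Adding the two per-interval bounds, absorbing the $\tfrac{1}{2}A^{\pm}$ tail on the right, and summing over $j \in J_1$, the first term is bounded by $2 C_2$ by Proposition \ref{prop:Main2}, while for the second I would compare the dyadic sum $\sum_{j\in J_1} M_{\etb}(2 t_j)/t_j$ to $\int_0^1 M_{\etb}(s)/s^2\,\ud s \le C_*$ from Assumption \ref{assum:Main}, since $t_j \asymp 2^{-j}$ and each scale appears at most once. This yields the desired constant $C_4 = C(C_2 + C_*)$. The main obstacle is the coupling of the $A^+$ and $A^-$ bounds on $J_1$: the lower bound on $G^+(t)/G^+(t_j')$ rests on the product inequality \eqref{eqn:Phi-deriv6a} that mixes $\beta^+$ and $\beta^-$, so the $A^+$ and $A^-$ estimates must be derived simultaneously, and the cross-coefficient must be made strictly less than $1$ so the tails can be absorbed. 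This is where the specific form of the quadratic \eqref{eqn:Phi-deriv6}, combined with the pointwise ceiling $A^{\pm}(t_j') \le C_3$ of Proposition \ref{prop:Main1}, plays a decisive role.
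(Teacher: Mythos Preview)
Your divergence identity $(1+A^+(t))G^+(t)=N^+(t)$ and the monotonicity of $N^+$ are correct, but the argument breaks at the step where you ``lower-bound $G^+(t)/G^+(t_j')$ by the natural homogeneous factor $(t/t_j')^{2\alpha^+(t_j')}$''. The hypothesis $\Phi(t_j)\ge\Phi(t_j')$ and the inequality \eqref{eqn:Phi-deriv6a} concern the Dirichlet energies $\int_{B_t\cap\Omega}|\nabla u^\pm|^2/|x|^{n-2}$, not the spherical $L^2$ norms $G^\pm(t)$; you give no mechanism to pass from one to the other. Even granting a bound $G^+(t)\gtrsim (t/t_j')^{2\alpha^+}G^+(t_j')$ with $\alpha^+$ bounded, your inequality $A^+(t)+1\le (1+A^+(t_j'))\,G^+(t_j')/G^+(t)$ yields only $\int_{I_j}A^+(t)/t\,\ud t \le C(1+A^+(t_j'))$, and Proposition~\ref{prop:Main1} gives $A^+(t_j')\le C_3$, hence an $O(1)$ contribution \emph{per interval}. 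Since $J_1$ may be infinite, this does not sum. The displayed per-interval estimate with summable right-hand side (the $M_{\etb}(2t_j)/t_j$ term and the $\tfrac12\int_{I_j}A^-/t$ cross-term) is asserted but never derived; in particular there is no reason the ratio $G^+(t_j')/G^+(t)$ should be within $O(M_{\etb}/t)$ of $1$.

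The paper's proof supplies precisely the missing decay mechanism, but by a different route. From $j\in J_1$ and \eqref{eqn:Phi-deriv6a} it first deduces that the spherical slices $U_{t}^{\pm}$ both have measure comparable to $1$ for most $t\in I_j$ (otherwise one of $\beta^\pm$ would be large enough to force $\Phi$ to increase on $I_j$). This exterior-mass condition feeds into an oscillation estimate (Theorem~8.27 of \cite{GT}) applied to the reflected functions $\tilde u^\pm$, yielding $\sup_{B_{t_j}\cap\Omega}|u^\pm|\le c_0\sup_{B_{t_j'}\cap\Omega}|u^\pm|$ with a fixed $c_0<1$. Iterating over all $J_1$-intervals between scale $s$ and scale $t$ gives the pointwise decay $(u^\pm(x))^2\le C\,c_0^{K(s)}\,G^\pm(t)$ for $|x|\le s$, which when inserted into the numerator of $A^+$ produces $A^+(t)\le C\bigl(t^{-1}M_{\etb}(t)+\int_0^t c_0^{K(s)}s^{-2}M_{\etb}(s)\,\ud s\bigr)$. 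The geometric factor $c_0^{K(s)}$ is exactly what makes the double sum $\sum_{j\in J_1}\int_{I_j}t^{-1}\int_0^t(\cdots)\,\ud s\,\ud t$ converge after swapping the order of summation. Your $N^+/G^+$ reformulation does not produce any analogue of this decay across successive $J_1$-scales, and without it the sum over $J_1$ cannot be closed.
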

Roughly speaking, the reason for introducing the intervals $J_1$ is as follows. For each interval $I_j$, one case is that for most $t \in I_j$ the measure of one of the spherical slices $U_{t}^{+}$ or $U_t^{-}$ is small, leading to a large characteristic exponent $\beta^{+}(t)$ or $\beta^{-}(t)$. Hence from Proposition \ref{prop:Main-half}, $\Phi(t)$ will increase on this interval and $j\notin J_1$. Alternatively, if the spherical slices $U_t^{\pm}$ are not small, then we will be able to obtain an improved estimate on $A^{\pm}(t)$ than in Proposition \ref{prop:Main1}. We will prove Proposition \ref{prop:Main3} in Section \ref{sec:harmonic}. 
\\
\\
For any $j\in J\backslash J_1$, we have $\int_{I_{j}} \Phi'(t)/\Phi(t) \ud t \geq0$, and so inserting the estimates from Propositions \ref{prop:Main2} and \ref{prop:Main3} into Proposition \ref{prop:Main-half} ensures that $\Phi'(t)/\Phi(t)$ satisfies the desired lower bound. Thus to complete the proof of Proposition \ref{prop:Main} (and hence also Theorem \ref{thm:Phi}), it is sufficient to prove Propositions \ref{prop:Main2}, \ref{prop:Main1}, and \ref{prop:Main3}.

\section{A Friedland-Hayman inequality for convex subsets of the sphere} \label{sec:FH1}

In this section we will assume $V_{t}$ is a geodesically convex subset of $\mS^{n-1}$
and prove the sharp Friedland-Hayman type inequality 
$\alpha^+(t) + \alpha^-(t)\ge 2$, with no additional error term.
We will use this in the next section to prove the weaker lower bound 
given by Proposition \ref{prop:Main2}. 
\begin{thm} \label{thm:convex}
Let $W$ be a closed, proper, convex  subset of $\mS^{n-1}$. Suppose that there exists a H\"older continuous function $w\in C^{\alpha}(W)$ such that
\begin{align*}
W^{+} = \{w>0\}, \qquad W^{-} = \{w<0\}, \qquad \gamma^{+} = \pa\{w>0\}\cap \mathring{W}, \qquad \gamma^{-} = \pa\{w<0\}\cap\mathring{W},
\end{align*}
with $\bar{W}^{+}\cup\bar{W}^{-}\subset W$, $\left(\pa W^{\pm}\right)\backslash\gamma^{\pm} \subset \pa W$. Define $\mu\left(W^{\pm}\right)$ to be the first Dirichlet-Neumann eigenvalue on $W^{\pm}$, with Dirichlet boundary conditions on $\gamma^{\pm}$, and Neumann boundary conditions on the rest of $\pa W^{\pm}$, and set 
\begin{align*}
\alpha\left(W^{\pm}\right) = -\frac{n-2}{2}+ \sqrt{\frac{(n-2)^2}{4} + \mu\left(W^{\pm}\right)}
\end{align*}
to be the corresponding characteristic exponents. Then, the characteristic exponents satisfy the lower bound
\begin{align*}
\alpha\left(W^{+}\right)  + \alpha\left(W^{-}\right)  \geq 2.
\end{align*} 
\end{thm}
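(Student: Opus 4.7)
The approach is to reduce the inequality to the classical Friedland-Hayman inequality on the full sphere $\mathbb{S}^{n-1}$ via a doubling construction. By the approximation results relegated to the Appendix, I may assume $W$ is smooth and strictly geodesically convex, so that $\partial W$ is a smooth hypersurface and the Dirichlet-Neumann eigenfunctions of $W^\pm$ are classical. Form the \emph{double} $\tilde W = W \cup_{\partial W} W'$, the closed $(n-1)$-dimensional Riemannian space obtained by gluing two isometric copies of $W$ along $\partial W$. Since $W$ is geodesically convex in $\mathbb{S}^{n-1}$, the second fundamental form of $\partial W$ is non-negative, so $\tilde W$ has sectional curvature $\geq 1$ in the Alexandrov sense across the seam; in particular $\operatorname{Ric}_{\tilde W} \geq (n-2)\,g$ in the synthetic $\mathrm{CD}(n-2,n-1)$ sense. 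Reflecting the first Dirichlet-Neumann eigenfunctions of $W^\pm$ across the Neumann portion of their boundaries yields first Dirichlet eigenfunctions on the doubled sets $\tilde W^\pm \subset \tilde W$ with exactly the same eigenvalues $\mu(W^\pm)$.

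Next, I would invoke a Sperner-type comparison for Dirichlet eigenvalues on a closed space whose synthetic Ricci lower bound matches that of the model sphere $\mathbb{S}^{n-1}$: every domain $\Omega \subset \tilde W$ satisfies $\mu_1(\Omega) \geq \mu_1(C_\theta)$, where $C_\theta \subset \mathbb{S}^{n-1}$ is a geodesic ball (spherical cap) of volume fraction $\theta = |\Omega|/|\tilde W|$. This is the standard consequence of the L\'evy-Gromov isoperimetric comparison combined with P\'olya-Szeg\H{o} rearrangement of the eigenfunction along level sets; in the possibly singular setting of $\tilde W$, this can be justified either by smoothing the seam along $\partial W$ before symmetrizing, or by appealing to the synthetic frameworks of B\'erard-Besson-Gallot and Cavalletti-Mondino that operate directly on $\mathrm{CD}$ spaces. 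Because $\tilde W^+$ and $\tilde W^-$ are disjoint, their normalized volume fractions $\theta^\pm$ satisfy $\theta^+ + \theta^- \leq 1$, and so the associated caps $C^\pm$ can be positioned as disjoint antipodal caps in $\mathbb{S}^{n-1}$. Converting the eigenvalue inequality $\mu(W^\pm) \geq \mu(C^\pm)$ to the characteristic exponent via the monotone formula $\alpha = -\tfrac{n-2}{2} + \sqrt{\tfrac{(n-2)^2}{4} + \mu}$ gives $\alpha(W^\pm) \geq \alpha(C^\pm)$.

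The problem then reduces to showing $\alpha(C^+) + \alpha(C^-) \geq 2$ for any two disjoint spherical caps in $\mathbb{S}^{n-1}$, with equality when $C^\pm$ are complementary hemispheres (for which $\alpha = 1$). This is the original Friedland-Hayman computation, carried out by expressing the radial profile of the first Dirichlet eigenfunction on a cap in terms of Gegenbauer/Jacobi functions and checking convexity of $\alpha$ as a function of the aperture angle. The principal obstacle in this plan is the Sperner-type symmetrization step on the doubled, potentially singular, manifold $\tilde W$: one must verify that the isoperimetric profile of $\tilde W$ dominates that of $\mathbb{S}^{n-1}$ \emph{through} the singular seam along $\partial W$. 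The saving grace is that, because $W$ is geodesically convex, all curvature concentrated on the seam is non-negative, so the seam helps rather than obstructs the isoperimetric comparison; once this is made rigorous, the rest of the argument is standard rearrangement.
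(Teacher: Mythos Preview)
Your proposal is correct and follows essentially the same route as the paper: reduce to the smooth strictly convex case by approximation, double $W$ across $\partial W$ to convert the Dirichlet--Neumann eigenvalues into pure Dirichlet eigenvalues on a closed manifold with $\operatorname{Ric}\geq (n-2)g$, apply L\'evy--Gromov together with the B\'erard--Meyer eigenvalue comparison to reduce to spherical caps, and finish with the classical Friedland--Hayman inequality. The only substantive difference is in how the doubling is made rigorous: rather than invoking synthetic $\mathrm{CD}$ machinery on the singular double, the paper constructs by hand (Proposition~\ref{prop:doubling}) a family of smooth metrics $g_\eta$ on $W$ that agree with the round metric away from a collar of $\partial W$, glue smoothly across the seam, and satisfy $\operatorname{Ric}_{g_\eta}\geq(1-\eta)g_\eta$; the desired inequality then follows by the classical smooth L\'evy--Gromov and B\'erard--Meyer theorems, letting $\eta\to 0$ at the end.
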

In particular, letting $\Gamma\subset\R^n$ be the convex cone generated by $W$, with vertex at the origin, and $\Gamma_1$, $\Gamma_2\subset \R^n$ be the cones generated by $W^+$, $W^-$ respectively, the characteristic exponents $\alpha(W^{+})$, $\alpha(W^{-})$ are precisely the degrees of homogeneity $\alpha_1$, $\alpha_2$ from Theorem \ref{thm:FrHa}. Therefore, Theorem \ref{thm:convex} implies the desired estimate in Theorem \ref{thm:FrHa}. In this section, we will prove Theorem \ref{thm:convex} under an extra smoothness assumption.
\begin{assum}  \label{assum:smooth}
The subset $W$ of $\mS^{n-1}$ in Theorem \ref{thm:convex} is smooth and strictly convex. The sets $\gamma^{\pm}$ are $(n-2)$-dimensional submanifolds, smooth up to the boundary of $W$. 
\end{assum}
\begin{rem} \label{rem:strict-convex}
The form of strict convexity that we use for $W\subset \mS^{n-1}$ in Assumption \ref{assum:smooth} is that the second fundamental form is strictly positive definite at each point on the boundary $\pa W$.
\end{rem}
In the Appendix we will carry out an approximation argument in order to prove Theorem \ref{thm:convex} without Assumption \ref{assum:smooth}.
\begin{rem} \label{rem:convex}
The lower bound of $2$ in \emph{Theorem \ref{thm:convex}} cannot be improved: For $W$ equal to the hemisphere $\{{x}\in \mS^n: x_{n}\geq 0\}$, and $W^{+} = W\cap\{x_1>0\}$, $W^{-} = W\cap\{x_1<0\}$, the function $v({x}) = x_1$ is a first Dirichlet-Neumann eigenfunction on $W^{\pm}$, with $\mu\left(W^{\pm}\right) = n-1$, leading to equality in  \emph{Theorem \ref{thm:convex}}.  More generally, the inequality
is sharp for any geodesically convex region whose boundary
contains two antipodal points.  After rotation, such regions can be written as
\[
W = \mS^{n-1} \cap (\R\times \Gamma)
\]
with $\Gamma$ a convex cone in $\R^{n-1}$.   The function $v({x}) = x_1$ is again a first Dirichlet-Neumann eigenfunction on $W \cap \{ \pm x_1 >0\}$, and so this also leads to equality in  \emph{Theorem \ref{thm:convex}}. In future work, we show that this is the only case of equality. 
\end{rem}
In this section we prove Theorem \ref{thm:convex} under Assumption \ref{assum:smooth} by obtaining a lower bound on each eigenvalue $\mu\left(W^{\pm}\right)$ individually. The proof will go as follows: We first form a closed manifold by doubling across the convex boundary. By ensuring a lower bound on the Ricci curvature of these closed manifolds, we can then apply the L\'evy-Gromov isoperimetric inequality \cite{Gr}, and Dirichlet eigenvalue lower bounds, \cite{BM}, \cite{FH} to obtain a lower bound on the characteristic exponents. More precisely, we will use the following isoperimetric inequality and eigenvalue estimates:
\begin{thm}[L\'evy-Gromov Isoperimetric Inequality, \cite{Gr} page 2] \label{thm:Gromov}
Let $\mathbb{M}_{\kappa}$ be the $(n-1)$-dimensional space of constant curvature $\kappa>0$, and let $(N,h)$ be a compact, smooth $(n-1)$-dimensional Riemannian manifold with Ricci curvature satisfying $\emph{Ric}_{h}(\xi,\xi) \geq \kappa|\xi|_{h}^2.$ Define the constant $\beta$ by
\begin{align*}
\beta = \frac{\emph{Vol}_{n-1}(N)}{\emph{Vol}_{n-1}(\mathbb{M}_{\kappa})}. 
\end{align*}
If $U\subset N$ is a finite union of domains with smooth boundary, and $\mathbb{D}_{U}$ is a geodesic disc in $\mathbb{M}_{\kappa}$ such that
\begin{align*}
\emph{Vol}_{n-1}(U) = \beta \emph{Vol}_{n-1}(\mathbb{D}_{U}),
\end{align*}
then we have the inequality
\begin{align*}
\emph{Vol}_{n-2}(\pa U) \geq \beta\emph{Vol}_{n-2}(\pa \mathbb{D}_{U}). 
\end{align*}
Here $\emph{Vol}_{n-1}(\cdot)$ and $\emph{Vol}_{n-2}(\cdot)$  measure the $(n-1)$ and $(n-2)$-dimensional volume  on the manifolds respectively. 
\end{thm}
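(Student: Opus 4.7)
The plan is to follow Gromov's original strategy, combining existence and regularity of isoperimetric minimizers on $N$ with Jacobi-field comparison against the model space $\mathbb{M}_\kappa$. The payoff of working with minimizers is that their boundaries have constant mean curvature, and this is what lets the Ricci lower bound feed into a sharp one-sided comparison.

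First I would reduce to a minimizer. For each $v \in (0,\emph{Vol}_{n-1}(N))$, compactness of sets of finite perimeter together with lower semicontinuity of perimeter on the closed manifold $N$ produces a set $U_v$ attaining
\begin{equation*}
I(v)=\inf\{\emph{Vol}_{n-2}(\pa U') : U'\subset N,\ \emph{Vol}_{n-1}(U')=v\}.
\end{equation*}
By De Giorgi--Federer--Almgren regularity, $\pa U_v$ is smooth outside a closed singular set of Hausdorff codimension at least seven, and the first-variation formula forces constant mean curvature $H$ on its smooth part. The singular set is harmless in the final inequality via routine approximation, so I would work as if $\pa U_v$ were globally smooth and $H$ were a global constant.

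Next, I would use Jacobi comparison along normal geodesics. Let $d$ denote the signed distance to $\pa U_v$ (positive into $U_v$) and let $J(t,p)$ be the Jacobian of the normal exponential map starting at $p\in\pa U_v$. The Riccati equation for the shape operator of parallel hypersurfaces, combined with $\emph{Ric}_h(\xi,\xi)\geq\kappa|\xi|_h^2$, implies that $J(t,p)$ is dominated, up to the first focal time, by the corresponding model Jacobian $J_\kappa(t,H)$ in $\mathbb{M}_\kappa$ associated to a totally umbilic hypersurface of the same initial mean curvature. Applying the coarea formula
\begin{equation*}
\emph{Vol}_{n-1}(U_v) = \int_{\pa U_v}\int_0^{\tau(p)} J(t,p)\,dt\,d\sigma(p),
\end{equation*}
and the analogous identity for $N\setminus U_v$, yields the two Heintze--Karcher-type comparisons
\begin{equation*}
v \ \leq \ \sigma\,\Psi_\kappa^+(H),\qquad \emph{Vol}_{n-1}(N)-v \ \leq \ \sigma\,\Psi_\kappa^-(H),
\end{equation*}
where $v=\emph{Vol}_{n-1}(U_v)$, $\sigma=\emph{Vol}_{n-2}(\pa U_v)$, and $\Psi_\kappa^\pm$ are explicit one-variable functions computable in the model.

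The final step is to eliminate the unknown $H$ from this system and recognize the right-hand side as the model isoperimetric profile evaluated at the normalized volume. Dividing the first inequality by $\emph{Vol}_{n-1}(\mathbb{M}_\kappa)$, the prescribed ratio $\emph{Vol}_{n-1}(U_v)/\emph{Vol}_{n-1}(N)=\emph{Vol}_{n-1}(\mathbb{D}_U)/\emph{Vol}_{n-1}(\mathbb{M}_\kappa)$ together with the complementary inequality identifies the worst case as a geodesic ball in $\mathbb{M}_\kappa$, giving $\sigma\geq \beta\,\emph{Vol}_{n-2}(\pa\mathbb{D}_U)$. The hard part is precisely this elimination: it rests on joint monotonicity/convexity properties of $\Psi_\kappa^\pm$ which pin the extremal configuration, and it requires handling the case in which the normal exponential map fails to cover all of $U_v$ before the focal time (here one truncates and recovers the deficit via Bishop--Gromov volume comparison applied to $N$ globally, using the Ricci lower bound a second time). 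Managing this focal-cut issue cleanly, rather than the Jacobi comparison itself, is the true obstacle to a short proof.
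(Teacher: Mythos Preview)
The paper does not prove this theorem: it is quoted verbatim from Gromov's appendix \cite{Gr} and used as a black box in the proof of Theorem~\ref{thm:convex}, so there is no in-paper argument to compare against. Your sketch is the standard L\'evy--Gromov/Heintze--Karcher route (isoperimetric minimizer $\Rightarrow$ constant mean curvature $\Rightarrow$ Jacobi comparison along normal geodesics $\Rightarrow$ elimination of $H$), and it is correct in outline; the honest caveats you flag about the singular set and the focal/cut-locus truncation are exactly the places where the full proof requires care.
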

\begin{thm}[B\'erard-Meyer, \cite{BM} 
 Theorem 5] \label{thm:Berard}
Let $\mathbb{M}_{\kappa}$, $(N,h)$, $\beta$, $U$ and $\mathbb{D}_{U}$ be as in Theorem \ref{thm:Gromov}. Then, setting $\mu(U)$, $\mu(\mathbb{D}_{U})$ to be the first Dirichlet eigenvalues of $U$, $\mathbb{D}_{U}$, we have the inequality
\begin{align*}
\mu(U) \geq \mu(\mathbb{D}_{U}).
\end{align*}
\end{thm}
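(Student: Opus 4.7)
The plan is to adapt the classical Schwarz symmetrization proof of the Faber--Krahn inequality, with Theorem \ref{thm:Gromov} (the L\'evy--Gromov isoperimetric inequality) playing the role that the Euclidean isoperimetric inequality plays in the classical argument. The main step is to replace the first Dirichlet eigenfunction on $U$ by a radially symmetric ``rearrangement'' on $\mathbb{D}_U$, and then invoke the variational characterization of $\mu(\mathbb{D}_U)$.

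Concretely, let $\phi \in H^1_0(U)$ be a first Dirichlet eigenfunction on $U$, normalized so that $\phi \geq 0$, and let $V(t) = \operatorname{Vol}_{n-1}(\{\phi > t\})$. I would define $\phi^*$ on $\mathbb{D}_U$ by declaring $\{\phi^* > t\}$ to be the geodesic ball in $\mathbb{M}_\kappa$, concentric with $\mathbb{D}_U$, of volume $V(t)/\beta$; this is a well-defined radial function vanishing on $\partial \mathbb{D}_U$, since $V(0) = \operatorname{Vol}_{n-1}(U) = \beta \operatorname{Vol}_{n-1}(\mathbb{D}_U)$. By the layer cake formula one checks the $L^2$ identity $\int_U \phi^2 = \beta \int_{\mathbb{D}_U} (\phi^*)^2$. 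I would then establish the Dirichlet energy comparison
\[
\int_U |\nabla \phi|^2 \;\geq\; \beta \int_{\mathbb{D}_U} |\nabla \phi^*|^2
\]
by the standard three-line calculation: apply the coarea formula, then Cauchy--Schwarz in the form $\operatorname{Vol}_{n-2}(\{\phi = t\})^2 \leq (-V'(t)) \int_{\{\phi = t\}} |\nabla \phi|\,d\sigma$, and finally Theorem \ref{thm:Gromov} to replace $\operatorname{Vol}_{n-2}(\{\phi = t\})$ by $\beta \operatorname{Vol}_{n-2}(\{\phi^* = t\})$. The analogous Cauchy--Schwarz step on the model is an equality, because $|\nabla \phi^*|$ is constant on each level set, and integrating in $t$ produces the displayed energy bound.

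Combining the two identities,
\[
\mu(U) \;=\; \frac{\int_U |\nabla \phi|^2}{\int_U \phi^2} \;\geq\; \frac{\int_{\mathbb{D}_U} |\nabla \phi^*|^2}{\int_{\mathbb{D}_U} (\phi^*)^2} \;\geq\; \mu(\mathbb{D}_U),
\]
where the last inequality uses that $\phi^*$ is an admissible test function for the Rayleigh quotient on $\mathbb{D}_U$.

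The main technical obstacle is that Theorem \ref{thm:Gromov} requires the test set to be a finite union of smoothly bounded domains, whereas a priori the sublevel sets $\{\phi > t\}$ are only open. This is handled by interior elliptic regularity for $\phi$ together with Sard's theorem: almost every $t \in (0, \max \phi)$ is a regular value, so $\{\phi > t\}$ has smooth boundary and the isoperimetric inequality applies; the set of exceptional $t$ is null and contributes nothing to the coarea integration. A secondary point is that $V$ may have flat portions where $V'$ vanishes, but there $\phi^*$ is also constant and both sides of the pointwise (in $t$) inequality vanish, so no difficulty arises.
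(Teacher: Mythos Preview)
Your outline is correct and is precisely the B\'erard--Meyer argument: Schwarz symmetrization via the coarea formula, Cauchy--Schwarz on level sets, and the L\'evy--Gromov isoperimetric inequality in place of the Euclidean one. Note, however, that the paper does not supply its own proof of this statement; Theorem~\ref{thm:Berard} is quoted directly from \cite{BM} as a black box and used, together with Theorems~\ref{thm:Gromov} and~\ref{thm:FH}, as an input to the proof of Theorem~\ref{thm:convex}. So there is no alternative proof in the paper to compare against---you have simply reconstructed the cited result.
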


\begin{thm}[Friedland-Hayman, \cite{FH} Theorem 3] \label{thm:FH}
Let $E^{\pm}$ be disjoint subsets of  $\mathbb{S}^{n-1}$. Setting $\mu\left(E^{\pm}\right)$ to be the first Dirichlet eigenvalue of $E^{\pm}$ and $\alpha\left(E^{\pm}\right)$ to be the characteristic exponent,
we have
\begin{align*}
\alpha\left(E^{+}\right) + \alpha\left(E^{-}\right)\geq 2.
\end{align*}
\end{thm}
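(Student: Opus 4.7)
The plan is to reduce to the case of spherical caps by rearrangement, reduce further to complementary caps by monotonicity, and then finish with an explicit convexity argument for the caps.

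First, I would invoke spherical cap symmetrization (the Sperner/Schwartz-type rearrangement on $\mS^{n-1}$). For each of $E^{+}$, $E^{-}$ replace it by a spherical cap $E^{*,\pm}$ having the same $(n-1)$-dimensional measure. The standard isoperimetric/rearrangement inequality on the sphere gives $\mu(E^{*,\pm}) \le \mu(E^{\pm})$, and since $\alpha$ is a monotone increasing function of $\mu$ via \eqref{eqn:DN1}, also $\alpha(E^{*,\pm}) \le \alpha(E^{\pm})$. Place the two caps antipodally; since $|E^{*,+}| + |E^{*,-}| \le |\mS^{n-1}|$, they remain disjoint. So it suffices to prove $\alpha(C_{\theta_1}) + \alpha(C_{\theta_2}) \ge 2$ for antipodal caps $C_{\theta_i}$ of opening angles $\theta_1,\theta_2$ with $\theta_1 + \theta_2 \le \pi$.

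Next, by the monotonicity of $\mu$, and hence $\alpha$, in the cap size, the infimum of $\alpha(C_{\theta_1}) + \alpha(C_{\theta_2})$ subject to $\theta_1+\theta_2 \le \pi$ is achieved on the boundary $\theta_1 + \theta_2 = \pi$, i.e., when the two caps are exactly complementary. On a cap $C_\theta$, the first Dirichlet eigenfunction is radial (depending only on the polar angle $\phi$ from the axis), so $v(\phi)$ satisfies an associated Legendre ODE
\[
\frac{1}{\sin^{n-2}\phi}\bigl(\sin^{n-2}\phi \, v'(\phi)\bigr)' + \alpha(\alpha+n-2)\,v(\phi) = 0, \qquad v(\theta)=0, \quad v \text{ regular at } 0,
\]
where I have written $\mu = \alpha(\alpha+n-2)$ using \eqref{eqn:DN1}. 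Thus $\alpha$ is determined by requiring the Gegenbauer/Jacobi solution regular at the north pole to vanish at $\phi = \theta$, defining a smooth strictly decreasing function $\theta \mapsto \alpha(\theta)$ on $(0,\pi)$, with $\alpha(\pi/2)=1$ since $v(\phi)=\cos\phi$ solves the equation on the hemisphere with $\mu = n-1$, hence $\alpha = 1$.

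Finally, I would prove the one-variable inequality
\[
F(\theta) := \alpha(\theta) + \alpha(\pi-\theta) \ge 2,\qquad \theta\in(0,\pi),
\]
with equality at $\theta = \pi/2$. Since $F(\pi/2)=2$ and $F$ is symmetric about $\pi/2$, it is enough to show convexity, or equivalently that $\alpha(\theta)$ is a convex function of $\theta$ on each side (or to verify $F'(\pi/2)=0$ together with $F''(\pi/2)\ge 0$ and absence of lower minima). I would execute this via the Friedland-Hayman substitution: write $\alpha = \alpha(\theta)$ and differentiate the Legendre ODE with respect to the parameter $\theta$ to obtain a Hadamard-type formula for $d\alpha/d\theta$ in terms of $v'(\theta)^2$. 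A convexity computation then reduces the inequality to a nonnegativity statement for a specific expression in the Legendre solution that can be checked by a Ricatti/Sturm comparison against the hemispherical solution $v(\phi) = \cos\phi$. The expected main obstacle is this final convexity step, since it is the only place where the round geometry is used in an essential way — all previous steps are soft symmetrization and monotonicity. Once the scalar convexity inequality is in place, combining it with the two rearrangement reductions gives $\alpha(E^{+}) + \alpha(E^{-}) \ge \alpha(C_{\theta_1})+\alpha(C_{\pi-\theta_1}) \ge 2$, which is the claim.
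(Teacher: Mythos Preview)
The paper does not prove this theorem; it is quoted from Friedland and Hayman \cite{FH} and invoked as a black box in the proof of Theorem~\ref{thm:convex}. There is therefore no ``paper's own proof'' to compare against.

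Your outline is essentially the classical Friedland--Hayman reduction: Sperner-type spherical rearrangement to caps, domain monotonicity to pass to complementary caps, and then a one-variable inequality $F(\theta)=\alpha(\theta)+\alpha(\pi-\theta)\ge 2$. The first two steps are standard and correct. The third step, which you correctly identify as the real content, is more delicate than your sketch indicates. Establishing $F\ge 2$ is \emph{not} a matter of checking $F''(\pi/2)\ge 0$: one needs a global statement, and convexity of $\theta\mapsto\alpha(\theta)$ in all dimensions is itself nontrivial. Friedland and Hayman's original argument for the complementary-cap case is a careful analysis of the Gegenbauer ODE rather than a direct second-variation computation, and a sharper quantitative version is in the unpublished work of Beckner, Kenig and Pipher \cite{BKP} cited in Remark~\ref{rem:FH}. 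Your proposed ``Hadamard formula plus Riccati/Sturm comparison against $\cos\phi$'' points in a plausible direction but, as written, is not a proof: you would need to state precisely which differential inequality the comparison yields and why it forces $F'\le 0$ on $(0,\pi/2)$ (or $F$ convex) rather than merely controlling behavior near $\pi/2$.
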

\begin{rem} \label{rem:FH}
Equality in Theorem \ref{thm:FH} is attained precisely when $E^{\pm}$ are complementary hemispheres, and there is a quantitative positive lower bound on $\alpha\left(E^{+}\right) + \alpha\left(E^{-}\right) - 2$ in terms of the distance of $E^{\pm}$ from hemispheres, \emph{\cite{BKP}}.
\end{rem}
To use these estimates, in the next proposition we construct a double of a smooth, strictly convex subset of $\mS^{n-1}$ to form a closed manifold, while maintaining a lower bound on its Ricci curvature.
\begin{prop} \label{prop:doubling}
Let $W$ be a smooth, closed, strictly convex subset of the sphere 
$\mathbb{S}^{n-1}$, and let $\eta>0$ be given. Then, there exists a smooth metric $g_{\eta}$ on $W$  with the following properties:
\begin{enumerate}

\item The metric $g_{\eta}$ uniformly approximates $g$ $($the round metric on $\mS^{n-1}$$)$, so that for all ${x}\in W$,
\begin{align*}
|g({x}) - g_\eta({x})| < \eta.
\end{align*}

\item The metric $g_{\eta}$ satisfies a symmetry property at the boundary $\pa W$ so that by reflecting the metric $g_{\eta}$ across $\pa W$, we obtain a smooth Riemannian metric on a closed, doubled manifold, denoted by $(M,g_{\eta})$. The boundary of the manifold $(W,g_{\eta})$ is strictly convex.

\item The Ricci curvature of $(M,g_{\eta})$ satisfies the property that $\emph{Ric}_{g_{\eta}} -(1-\eta)g_{\eta}$ is positive definite.

\end{enumerate}
\end{prop}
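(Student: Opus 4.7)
The plan is to introduce Fermi (normal) coordinates $(t,y)$ in a one-sided collar of $\pa W$ inside $W$, with $t\ge 0$ the geodesic distance to $\pa W$ and $y$ local coordinates on $\pa W$, so that the round metric decomposes as $g = dt^2 + h_{ab}(t,y)\,dy^a\,dy^b$ for a smooth family $h(t,\cdot)$ of metrics on $\pa W$. I will construct $g_\eta$ by reparametrizing the normal coordinate:
\[
g_\eta \;=\; dt^2 + h_{ab}(\chi_\delta(t),y)\,dy^a\,dy^b
\]
on the collar $0\le t<\delta$, and $g_\eta=g$ on the rest of $W$. Here $\chi_\delta:[0,\infty)\to[0,\infty)$ is smooth with $\chi_\delta(t)=t$ for $t\ge\delta$ and $\chi_\delta(0)>0$, and it extends to an even, convex function on $\R$. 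The evenness forces all odd $t$-derivatives of $g_\eta$ to vanish at $\pa W$, so reflection across $\pa W$ produces a smooth Riemannian metric on the double $M$.

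Property (1) will follow from $|\chi_\delta(t)-t|\le C\delta$ together with the smoothness of $h$, giving $|g_\eta-g|<\eta$ once $\delta$ is sufficiently small. Property (2) is then immediate from the evenness, and the strict convexity statement for $(W,g_\eta)$ is inherited from the original round geometry, since the modification only adjusts the normal coordinate in a thin collar.

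The heart of the proof is property (3). Since $g_\eta$ is obtained from $g$ by a single-variable reparametrization, the shape operator of a level set $\{t=\mathrm{const}\}$ in $g_\eta$ is $\chi_\delta'(t)$ times the shape operator of the corresponding level set in $g$. Substituting this into the standard Ricci formula for a metric of the form $dt^2+h$ and using $\mathrm{Ric}^g=(n-2)g$ yields, in the normal direction,
\[
\mathrm{Ric}^{g_\eta}(\pa_t,\pa_t)(t,y) \;=\; \chi_\delta''(t)\,H_m(\chi_\delta(t),y) + (n-2)\,\chi_\delta'(t)^2,
\]
where $H_m(\tau,y)>0$ is the mean curvature of the level set $\{t=\tau\}$ in the round metric, uniformly positive by strict convexity of $\pa W$. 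Convexity of $\chi_\delta$ makes the first term positive, and analogous formulas in the tangential and mixed directions decompose similarly into a round contribution (scaled by powers of $\chi_\delta'$) plus corrections controlled by $\chi_\delta'' H_m$ and by the intrinsic Ricci of the hypersurface metric $h(\chi_\delta(t),y)$.

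The main obstacle will be packaging these pointwise identities into the matrix inequality $\mathrm{Ric}^{g_\eta}\ge(1-\eta)g_\eta$ uniformly throughout the collar. For $t$ near $\delta$, where $\chi_\delta'\approx 1$, the round contribution dominates and Ricci is automatically close to $(n-2)g_\eta$. For $t$ near $0$, where $\chi_\delta'$ vanishes, one must rely in the normal direction on $\chi_\delta''\cdot H_m$, which is positive by strict convexity and can be chosen of order $1/\delta$, and in the tangential directions on the intrinsic Ricci of $h(\chi_\delta(t),y)$, which for small $\delta$ stays within $O(\delta)$ of the intrinsic Ricci of $\pa W$ in the round metric (which a Gauss-equation calculation shows is at least $(n-3)g$ under strict convexity, with the $\chi_\delta''H_m$ term again supplying the remainder). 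A careful choice of $\chi_\delta$, smoothed near $t=\delta$ to glue into the identity, balances these regimes and delivers the required bound once $\delta$ is taken small in terms of $\eta$ and the convexity constant of $\pa W$.
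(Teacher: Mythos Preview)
Your approach is correct and genuinely different from the paper's. Both work in Fermi coordinates $g=dt^2+h(t,y)$ near $\pa W$ and both exploit that strict convexity makes $-\pa_t h>0$, but the modifications differ. The paper replaces $h(r,y)$ on the collar $[0,\rho]$ by the quadratic-in-$r$ metric $H_\rho(r,y)=r^2b_\rho(y)+c_\rho(y)$, matched $C^1$ at $r=\rho$; this is a new metric outside the original family, and the Ricci lower bound comes from $\pa_r^2 H_\rho=\pa_r h(\rho,y)/\rho$ being large and negative-definite, after which a separate quintic interpolation smooths the seam. Your reparametrization $h(\chi_\delta(t),y)$ stays inside the original family and yields cleaner curvature identities: the mixed Ricci vanishes identically (it is $\chi_\delta'$ times the Codazzi defect, which is zero on the round sphere), the tangential sectional curvatures satisfy $K_{g_\eta}(\pa_i,\pa_j)=1+(1-(\chi_\delta')^2)\bigl(II_{ii}II_{jj}-II_{ij}^2\bigr)\ge 1$ directly from Gauss and $0\le\chi_\delta'\le1$, and one is reduced to the single scalar inequality $\chi_\delta''(t)\,\lambda_{\min}(II)+(\chi_\delta'(t))^2\ge 1-\eta$, which a suitable profile $\chi_\delta(t)=\delta\Psi(t/\delta)$ with $\Psi''\sim 1$ on $[0,1)$ handles. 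The paper's route is more hands-on and follows the cited construction of Ach\'e--Maximo--Wu; yours packages the geometry more efficiently.

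One point to correct: your assertion that ``the strict convexity statement for $(W,g_\eta)$ is inherited'' is false. Since $\chi_\delta'(0)=0$, the second fundamental form of $\pa W$ in $(W,g_\eta)$ is $\chi_\delta'(0)\,II=0$, so $\pa W$ is totally geodesic in $g_\eta$ --- as it must be for the reflected metric on the double to be smooth. The same is true of the paper's construction (where $\pa_r H_\rho|_{r=0}=0$), and indeed the ``strictly convex'' clause in Property~2 is not used anywhere in the application; only the smoothness of the double and the Ricci lower bound matter for invoking L\'evy--Gromov and B\'erard--Meyer.
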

Before proving the proposition, we first use it to complete the proof of Theorem \ref{thm:convex}. Applying the result of  Proposition \ref{prop:doubling}, we make some definitions:
\begin{defn} \label{defn:doubled-sets}
Let $(M,g_{\eta})$ be the closed, doubled manifold obtained by applying Proposition \ref{prop:doubling} to the set $W$. Via the doubling of $W$, this also gives submanifolds $M^{\pm} \subset M$, with boundaries  ${\Gamma}^{\pm}$ consisting of two glued copies of the $(n-2)$-dimensional submanifolds ${\gamma}^{\pm}$.
\end{defn}
\begin{defn} \label{defn:doubled-eigenvalue}
Let $\mu^{\pm}_{\eta}$ be the first eigenvalue for $(M^{\pm},g_{\eta})$, with Dirichlet boundary conditions on ${\Gamma}^{\pm}$, and let
\begin{align*}
\alpha^{\pm}_{\eta} = -\frac{n-2}{2} + \sqrt{\frac{(n-2)^2}{4} + \mu_{\eta}^{\pm}}
\end{align*}
be the corresponding characteristic exponent.
\end{defn}
We can reduce the proof of Theorem \ref{thm:convex} to studying the eigenvalues $\mu^{\pm}_{\eta}$ via:
\begin{lem} \label{lem:eigenvalue-limit}
The eigenvalues satisfy $\lim_{\eta\to0} \mu^{\pm}_{\eta} = \mu\left(W^{\pm}\right)$.
\end{lem}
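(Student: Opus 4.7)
The plan is to use the reflection symmetry of $(M,g_\eta)$ across $\pa W$ to rewrite $\mu^\pm_\eta$ as the first eigenvalue of a Dirichlet-Neumann problem on $(W^\pm, g_\eta)$, and then to deduce that this converges to $\mu(W^\pm)$ by continuity of the Rayleigh quotient in the metric.

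First, I would exploit the reflection structure from property (2) of Proposition \ref{prop:doubling}: the closed manifold $M$ admits a smooth isometric involution $\iota$ that fixes $\pa W$ pointwise and exchanges the two copies of $W$, and both $M^\pm$ and $\Gamma^\pm$ are $\iota$-invariant. Since the first Dirichlet eigenvalue on $(M^\pm, g_\eta)$ is simple and its eigenfunction may be chosen positive, this eigenfunction is automatically $\iota$-invariant; its restriction $\tilde{v}_\eta^\pm$ to $W^\pm$ is smooth up to $\pa W$, vanishes on $\gamma^\pm$, and inherits a vanishing normal derivative on $\pa W \cap \pa W^\pm$ from the symmetry. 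Conversely, a smooth Dirichlet-Neumann eigenfunction on $(W^\pm, g_\eta)$ extends by $\iota$-reflection to a smooth Dirichlet eigenfunction on $(M^\pm, g_\eta)$. This identifies $\mu^\pm_\eta$ with the first Dirichlet-Neumann eigenvalue $\tilde{\mu}^\pm_\eta$ of $(W^\pm, g_\eta)$, with Dirichlet data on $\gamma^\pm$ and Neumann data on the remainder of $\pa W^\pm$.

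Next, I would prove that $\tilde{\mu}^\pm_\eta \to \mu(W^\pm)$ as $\eta \to 0$. Both eigenvalues are infima of Rayleigh quotients over the same test function space $X^\pm = \{\phi \in C^\infty(W^\pm) : \mathrm{supp}(\phi)\cap \gamma^\pm = \emptyset\}$, differing only in the underlying metric ($g_\eta$ versus $g$). Property (1) of Proposition \ref{prop:doubling} ensures $g_\eta \to g$ uniformly on $W$, so both the gradient integrand $|\nabla\phi|^2_{g}$ and the volume form $dV_g$ agree with their $g_\eta$ counterparts up to a factor $1+O(\eta)$ for each fixed $\phi\in X^\pm$. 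Plugging a near-optimal test function for $\mu(W^\pm)$ into the $\tilde{\mu}^\pm_\eta$-quotient yields $\limsup_{\eta\to 0}\tilde{\mu}^\pm_\eta \leq \mu(W^\pm)$. For the reverse inequality I would normalize the $g_\eta$-eigenfunctions $\tilde{v}_\eta^\pm$ in $L^2(W^\pm, g)$; the resulting bounded Rayleigh quotients give uniform $H^1$ bounds, from which I extract a weak $H^1$-limit $v$ which still vanishes on $\gamma^\pm$ in trace sense. Lower semicontinuity of $\int |\nabla_g v|^2\, dV_g$ and strong $L^2$ convergence of a subsequence then yield $\mu(W^\pm) \leq \liminf_{\eta \to 0} \tilde{\mu}^\pm_\eta$.

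The step I expect to require the most care is the boundary regularity in the symmetry reduction: one must know that the first Dirichlet eigenfunction on $(M^\pm, g_\eta)$ is smooth enough across $\pa W$ for its $\iota$-invariance to force a genuine classical Neumann condition on the restriction. This is ultimately delivered by property (2) of Proposition \ref{prop:doubling}, which realizes $\pa W$ as an interior smooth hypersurface of $(M, g_\eta)$ away from $\Gamma^\pm$; standard interior elliptic regularity then supplies the needed smoothness of $\tilde{v}_\eta^\pm$ in a neighborhood of $\pa W\cap\pa W^\pm$, after which the convergence of Rayleigh quotients is routine.
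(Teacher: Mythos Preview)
Your proposal is correct and follows essentially the same approach as the paper: both use the simplicity and positivity of the first Dirichlet eigenfunction on $(M^\pm,g_\eta)$ to force $\iota$-invariance, restrict to a Dirichlet--Neumann problem on $(W^\pm,g_\eta)$, and then pass to the limit using the uniform convergence $g_\eta\to g$ together with $H^1$-compactness of the normalized eigenfunctions. The only cosmetic difference is that you organize the convergence as separate $\limsup$ and $\liminf$ Rayleigh-quotient inequalities, whereas the paper passes to the limit directly in the weak form of the eigenvalue equation.
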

\begin{proof}{Lemma \ref{lem:eigenvalue-limit}}
 Let $u^{\pm}_{\eta}\geq0$ be the eigenfunction corresponding to $\mu^{\pm}_{\eta}$, with $L^2({M}^{\pm})$-norm equal to $1$. By the uniqueness of the first eigenfunction, $u^{\pm}_{\eta}$ must be symmetric across the glued boundary. Therefore, denoting $v^{\pm}_{\eta}$ to be the restriction of $u^{\pm}_{\eta}$ to ${W}^{\pm}$, we obtain that for any test function $\psi \in X^{\pm}$ we have
\begin{align} \label{eqn:eta1}
\int_{{W}^{\pm}} \nabla_{g_{\eta}}v_{\eta}^{\pm}(x)\cdot \nabla_{g_{\eta}}\psi(x)\ud \sigma_{g_{\eta}} = \mu^{\pm}_{\eta} \int_{{W}^{\pm}} v^{\pm}_{\eta}({x}) \psi({x})\ud \sigma_{g_{\eta}}.
\end{align}
Here $X^{\pm}$ is given by $X^{\pm} = \{\phi \in C^{\infty}({W}^{\pm}) : \text{supp}(\phi) \cap \gamma^{\pm} =\emptyset , \phi\neq0\}.$  By Proposition \ref{prop:doubling}, the coefficients ${g}^{jk}_{\eta}({x})$ are $C^1$ in $\eta$, and converge to the round metric on the sphere ${g}^{jk}({x})$ as $\eta$ tends to $0$. Moreover, $v^{\pm}_{\eta}$ is a uniformly bounded sequence in $\eta$ in $H^{1}({W}^{\pm})$, and so converges weakly in $H^{1}({W}^{\pm})$ (and strongly in $L^2({W}^{\pm})$) to a function $v^{\pm}$ as $\eta$ tends to $0$. Letting $\eta$ tend to $0$ in \eqref{eqn:eta1}, we thus find that  $\lim_{\eta\to0}\mu^{\pm}_{\eta}$ exists, and we have the equality
\begin{align*}
\int_{{W}^{\pm}} \nabla_{g}v_{\eta}^{\pm}(x)\cdot \nabla_{g}\psi(x)  \ud\sigma_{g}= \left(\lim_{\eta\to0}\mu^{\pm}_{\eta}\right) \int_{{W}^{\pm}} v^{\pm}({x}) \psi({x})\ud \sigma_{g}
\end{align*}
for all $\psi\in X^{\pm}$. Thus, this limit is equal to $\mu\left(W^{\pm}\right)$ as required.  
\end{proof}
Lemma \ref{lem:eigenvalue-limit} allows us to reduce matters to obtain a lower bound on $\mu_{\eta}^{\pm}$, and we now apply the estimates recorded at the start of the proof: We apply Theorems \ref{thm:Gromov} and \ref{thm:Berard} to $\kappa = 1-\eta$, $N = M$, $U = M^{\pm}$. Rescaling Theorem \ref{thm:FH} to the sphere of constant curvature $\kappa = 1-\eta$,  therefore implies that 
\begin{align*}
\alpha_{\eta}^{+} + \alpha_{\eta}^{-} \geq \gamma(\eta),
\end{align*}
with $\lim_{\eta\to0} \gamma(\eta) = 2$. Combining this with Lemma \ref{lem:eigenvalue-limit} completes the proof of  Theorem \ref{thm:convex} under Assumption \ref{assum:smooth}. We therefore are left to prove Proposition \ref{prop:doubling}.

\begin{proof}{Proposition \ref{prop:doubling}}
The definition of the metric $g_{\eta}$ will follow the construction in \cite{AMW} and \cite{W}. Once we have a $C^2$ metric with the three properties in the statement of the theorem it is straightforward to mollify to obtain a smooth metric satisfying the same curvature lower bounds (up to an arbitrarily small error). Therefore, we will construct a $C^2$ metric. 
\\
\\
For each $\eta>0$, we first construct a piecewise smooth, $C^1$ metric $G_{\eta}$ which satisfies the three properties away from a seam at a small distance $\rho$ from $\pa W$. We will then construct a $C^2$ metric $g_{\eta}$, which smooths out this seam while still maintaining the lower bounds on the Ricci curvature. 
\\
\\
\textbf{Step 1: Construction of the piecewise smooth, $C^1$ metric $G_{\eta}$:} We work in local coordinates around a given point ${x}$ on $\pa W$: Let $D$ be a neighbourhood of ${x}$ in $W$ with local coordinates $(r,y)$, where $r$ is the geodesic distance to $\pa W$, $d_{g}(\cdot,\pa W)$, with $r>0$ inside $W$. In these coordinates, the metric $g$ is given by
\begin{align} \label{eqn:localmetric}
g(r,y) = \ud r^2 + h(r,y).
\end{align}
Here $0\leq r < r_0$, for some small fixed constant $r_0>0$, and $h(r,\cdot)$ is a metric on an open submanifold of $\mS^{n-2}$. We then define the slice $\Sigma_r$ by $\Sigma_r = \{(r,y) \in D:d_g((r,y),\pa W) = r \}$, and denote $II^r_g$ to be the second fundamental form of $\Sigma_r$ (computed with respect to the outward, $-\pa_r$, normal). The strict convexity assumption ensures that $II^0_g  > 0$ on $\pa M$.  Thus, by the smoothness of the metric, by taking $r_0>0$ sufficiently small, we may assume that the second fundamental forms $II^r_g$ of $\Sigma_r$ are strictly positive definite. Using the local expression for the metric in \eqref{eqn:localmetric}, this implies that
\begin{align} \label{eqn:localconvexity}
\pa_rh(r,y) <0,
\end{align}
Here $<$ denotes strict negative definiteness. We  now define a metric $G_{\eta}$. Fix a small constant $\rho = \rho(\eta)>0$, with $0<\rho<r_0$, which we will choose below. We then interpolate the metric $h(r,\cdot)$ across the slice $\Sigma_\rho$ with a metric $h_\rho(r,\cdot)$ which has the desired reflection symmetry across $\pa W$:
\begin{defn} \label{defn:g1}
We define the metric $G_{\eta}$ in local coordinates $(r,y)$ on $D$ by
$$
G_{\eta}(r,y) = \ud r^2 + H_{\eta}(r,y) =  \begin{cases}
\ud r^2 + H_\rho(r,y), & \emph{ if } 0\leq r \leq \rho \\
 \ud r^2 + h(r,y) , & \emph{ if } \rho< r <r_0.
\end{cases}
$$
The part of the metric $H_\rho(r,y)$ is defined by
\begin{align*}
{H}_\rho(r,y) = r^2b_{\rho}(y) + c_{\rho}(y),
\end{align*}
for functions $\left(b_{\rho}\right)_{ij}(y)$ and $\left(c_{\rho}\right)_{ij}(y)$ given below.
\end{defn}
Since $H_\rho(r,y)$ has no linear term in $r$, the metric $G_{\eta}(r,y)$ has the desired reflection symmetry across $r=0$. Also, this definition ensures that the slices for fixed $r$ are still equal to $\Sigma_r$  for $0\leq r\leq \rho$. The functions $\left(b_{\rho}\right)_{ij}(y)$ and $\left(c_{\rho}\right)_{ij}(y)$ are chosen to ensure that $H_\rho^r(y)$ and its first derivative in $r$ matches with $h^r(y)$ on $\Sigma_{r}$. Namely, we set
\begin{align*}
\left(b_{\rho}\right)_{ij}(y) = \frac{\pa_rh_{ij}(\rho,y)}{2\rho}, \qquad \left(c_{\rho}\right)_{ij}(y) = h_{ij}(\rho,y) - \rho\frac{\pa_rh_{ij}(\rho,y)}{2}. 
\end{align*}
Thus, the metric $G_{\eta}$ is $C^1$ on $D$, and we may choose $\rho = \rho(\eta)>0$ sufficiently small so that
\begin{align} \label{eqn:g1bounds}
\left|h_{ij}(r,y)-\left(H_\rho\right)_{ij}(r,y)\right|<\eta, \qquad \left|\pa h_{ij}(r,y)-\pa\left(H_\rho\right)_{ij}(r,y)\right|<\eta,
\end{align}
for $0\leq r \leq \rho$, $i,j = 1,\ldots,n-2$, where $\pa$ corresponds to a first derivative in $r$ or $y$. 
\\
\\
For this definition of $G_{\eta}$, with $\rho$ sufficiently small, we claim that the Ricci curvature of $(M,G_{\eta})$ satisfies the property that 
\begin{align} \label{eqn:g1Ricci}
\text{Ric}_{G_{\eta}}-G_{\eta} \text{ is positive definite for } r\neq \rho.
\end{align}
Since we have only modified $g$ for $r\leq \rho$, to show this we can restrict to this range of $r$. By the strict convexity assumption (and taking $\rho>0$ sufficiently small), we can find $\Lambda>0$ so that the eigenvalues of $\pa_rh(\rho,y)$ (computed with respect to the metric $h(r,y)$) are smaller than $- 2\Lambda$. Thus, we also have 
\begin{align} \label{eqn:g1Ricci1}
\pa_r^2{H}_\rho(r,y) + \frac{\Lambda}{\rho}{H}_\rho(r,y) \leq 0
\end{align}
for $0\leq r\leq \rho$. We can now obtain a lower bound on the Ricci curvature by using \eqref{eqn:g1Ricci1}. Denoting the tangential derivatives on $\Sigma_{r}$ by $\pa_{i}$ for $1\leq i \leq n-2$, the sectional curvatures $K_{G_{\eta}}(\cdot,\cdot)$ are given by
\begin{align*}
K_{G_{\eta}}(\pa_i,\pa_j)(r,y) = K_{H_{\rho}(r,\cdot)}(\pa_i,\pa_j)(y) + \frac{1}{4}\left(\pa_r(H_{\rho})_{ij}(r,y)\pa_r(H_{\rho})_{ij}(r,y)-\pa_r(H_{\rho})_{ii}(r,y)\pa_r(H_{\rho})_{jj}(r,y)\right),
\end{align*}
where the first term on the right hand side corresponds to the sectional curvature on $\Sigma_{r}$ with respect to the metric $H_\rho(r,y)$. In particular, these sectional curvature are bounded by an absolute constant (independent of $\rho>0$). Using \eqref{eqn:g1Ricci1}, we also have that $K_{G_{\eta}}(\pa_i,\pa_r)(r,y)$ equals
\begin{align*}
 -\frac{1}{2}\pa_r^2(H_\rho)_{ii}(r,y) &+\frac{1}{4}\left({H}_\rho\right)^{pl}(r,y)\pa_r({H}_\rho)_{ip}(r,y)\pa_r({H}_\rho)_{il}(r,y) \\& \geq \frac{\Lambda}{2\rho}\left({H}_\rho\right)_{ii}(r,y) + \left({H}_\rho\right)^{pl}(r,y)b_{ip}(y)b_{il}(y)r^2  \geq \frac{c_1\Lambda}{\rho},
\end{align*}
for some absolute constant $c_1>0$ (depending on $W$, but independent of $\rho>0$). Thus, by the definition of the Ricci curvature tensor, we see that
\begin{align*}
\text{Ric}_{G_{\eta}}(\pa_r,\pa_r)(r,y)  \geq \frac{c_1\Lambda}{\rho}, \qquad
\text{Ric}_{G_{\eta}}(\pa_i,\pa_i)(r,y)  \geq \frac{c_1\Lambda}{\rho} - C_1.
\end{align*} 
Therefore, by taking $\rho>0$ sufficiently small, we can ensure that \eqref{eqn:g1Ricci} holds. 
\\
\\
\textbf{Step 2: Construction of the $C^2$ metric $g_{\eta}$:} The metric $G_{\eta}$ constructed above fails to be $C^2$ only because it does not have two continuous derivatives in $r$ across the slice $\Sigma_{\rho}$. We therefore will now mollify the metric across $r = \rho$, while maintaining a lower bound on the Ricci curvature. 
\begin{defn} \label{defn:g2}
The metric 
\begin{align*}
g_{\eta}(r,y) = \ud r^2 + h_{\eta}(r,y)
\end{align*}
is given as follows: Fix $\tau>0$ with $\tau<\rho/100$. For $r\in[\rho-\tau,\rho+\tau]$, define $\left(h_{\eta}\right)_{ij}(r,y)$ to be equal to the polynomial of degree $5$ in $r$, which agrees with  $\left(H_{\eta}\right)_{ij}(r,y)$ at $r=\rho\pm\tau$  up to and including the second derivative in $r$. For $r\notin[\rho-\tau,\rho+\tau]$, set $h_{\eta}(r,y)$ to be equal to $H_{\eta}(r,y)$.
\end{defn}
The metric $g_{\eta}$ satisfies the same reflection property as $G_{\eta}$ across $r=0$. By definition, the metric $g_{\eta}(r,x)$ is $C^2$ smooth in $r$ and $x$. Moreover, since $G_{\eta}(r,y)$ is $C^1$-smooth in $r$,  we may choose $\tau = \tau(\eta)>0$ sufficiently small so that 
\begin{align} \label{eqn:g2}
\left|\left(H_{\eta}\right)_{ij}(r,y) - \left(h_{\eta}\right)_{ij}(r,y)\right|<\eta,\qquad  \left|\pa_r\left(H_{\eta}\right)_{ij}(r,y) - \pa_r\left(h_{\eta}\right)_{ij}(r,y)\right|<\eta.
\end{align}
The metric $G_{\eta}(r,y)$ is smooth in $x$ for fixed $r$, and so we can obtain the same inequalities for any number of derivatives in $y$. However, the second derivatives in $r$ of $\left(H_{\eta}\right)_{ij}(r,y)$ do not match for $r = \rho_{-}$ and $r = \rho_{+}$, and so the analogous bound on $\pa^2_r\left(H_{\eta}\right)_{ij}(r,y) - \pa^2_r\left(h_{\eta}\right)_{ij}(r,y)$ will not in general hold. Despite this we will show that there exists a constant $C_1$ (depending only on $W$) such that
\begin{align} \label{eqn:g2Ricci}
\text{Ric}_{g_{\eta}}-\left(1-C_1\eta\right)g_{\eta} \text{ is positive definite}.
\end{align}
This follows provided we can obtain (for $\tau$ sufficiently small) the one-sided bound 
\begin{align} \label{eqn:g2Ricci2}
K_{g_{\eta}}(\pa_i,\pa_r)(r,y) \geq \min\{K_{G_{\eta}}(\pa_i,\pa_r)(\rho-\tau,y),K_{G_{\eta}}(\pa_i,\pa_r)(\rho+\tau,y)\} - C_2\eta.
\end{align}
This is because by the estimates in \eqref{eqn:g2},
\begin{align*}
\left|K_{g_{\eta}}(\pa_i,\pa_j)(r,y) - K_{G_{\eta}}(\pa_i,\pa_j)(r,y)\right|\leq C_3\eta
\end{align*}
for the other sectional curvatures, with the constant $C_3$ depending only on $W$ as usual. From the formula
\begin{align*} 
K_{g_{\eta}}(\pa_i,\pa_r)(r,y)  = -\frac{1}{2}\pa^2_r\left(h_{\eta}\right)_{ii}(r,y) +\frac{1}{4}\left(\left(h_{\eta}\right)(r,y)\right)^{pl}\pa_r\left(h_{\eta}\right)_{ip}(r,y)\pa_r\left(h_{\eta}\right)_{il}(r,y),
\end{align*}
we see that $\pa_r^2 (h_\eta)_{ii}(r,y)$ appears linearly. Therefore, using \eqref{eqn:g2} again,  \eqref{eqn:g2Ricci2} follows from the bound
\begin{align} \label{eqn:g2Ricci1}
\pa^2_r\left(h_{\eta}\right)_{ii}(r,y) \leq \max\left\{ \pa^2_r\left(H_{\eta}\right)_{ii}(\rho-\tau,y),\pa^2_r\left(H_{\eta}\right)_{ii}(\rho+\tau,y)\right\} + C_4\eta,
\end{align}
for all $\rho-\tau \leq r \leq \rho+ \tau$. 
\\
\\
To complete the proof it remains to show that \eqref{eqn:g2Ricci1} holds for $\tau = \tau(\eta)>0$ sufficiently small. We have $\pa^2_r\left(H_{\eta}\right)_{ii}(\rho-\tau,y)\leq \pa^2_r\left(H_{\eta}\right)_{ii}(\rho+\tau,y)$. Therefore, for each $y$ and $i$ fixed, we define the quintic polynomial  $F(t) = (H_{\eta})_{ii}(\rho-\tau + t,y)$
and it is sufficient to show that
\begin{align*}
F''(t) \leq F''(1) +C_4\eta
\end{align*}
for all $t$, $0\leq t \leq 2\tau$. By subtracting a quadratic polynomial we may assume that $F(0) = F'(0) = F''(0) = 0$, and then by \eqref{eqn:g2} we have $|F(2\tau)| + |F'(2\tau)| \leq C\tau$, together with $F''(1)\geq0$. The upper bound on $F''(t)$ then follows for $\tau$ sufficiently small by directly computing the second derivative.
\end{proof}

\section{A Dini condition for the characteristic exponents} \label{sec:FH2}

In this section, we will use the lower bound on the characteristic exponents from Theorem \ref{thm:convex} to prove Proposition \ref{prop:Main2}.  
Since the sets $V_{t}\subset\mathbb{S}^{n-1}$ are not necessarily convex, we cannot apply Theorem \ref{thm:convex} directly.  Instead, 
we use the fact that Assumption \ref{assum:Main}, the Dini condition, holds.  
The key step is to approximate $V_{t}$ by a convex subset of $\mS^{n-1}$.
\begin{prop} \label{prop:convex-approx}
For each $t\in(0,1)$, there exists a convex set $W_{t}\subset V_{t}$ such that denoting $A(t)$ by $A(t) = \emph{Haus}_{\mS^{n-1}}\left(V_{t}, W_{t}\right)$, the bound
\begin{align*}
A(t) \leq C_1t^{-1}M_{\etb}(4t)
\end{align*}
holds.  Here $C_1$ is a constant depending only on the Lipschitz constant of $\pa\Omega$.
\end{prop}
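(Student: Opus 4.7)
The plan is to define $W_t$ as the intersection of $\mS^{n-1}$ with a closed convex cone $\Gamma_t$ whose faces are obtained by translating the supporting hyperplanes of $\Omega$ at nearby boundary points so that they all pass through the origin.  Concretely, I would set
\[
\Gamma_t \,:=\, \bigcap \{\, x \in \R^n : \nu \cdot x \le 0 \,\},
\]
where the intersection ranges over $y \in \pa\Omega \cap \bar B_{4t}$ and all outer unit normals $\nu$ to $\Omega$ at $y$, and let $W_t := \Gamma_t \cap \mS^{n-1}$.  Each half-space is a cone with vertex $\etb$, so $\Gamma_t$ is a closed convex cone and $W_t$ is geodesically convex on the sphere. (If $\etb \notin \pa\Omega$ and $4t < \mathrm{dist}(\etb,\pa\Omega)$, then $M_{\etb}(4t) = 0$ and we may simply take $W_t = V_t = \mS^{n-1}$, so assume otherwise.)

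Next I would verify the containment $W_t \subset V_t$, which amounts to $\Gamma_t \cap t\mS^{n-1} \subset \Omega$.  Given $z \in W_t$, suppose for contradiction that $tz \notin \bar\Omega$, and let $y_0 \in \pa\Omega$ be the point of $\bar\Omega$ nearest to $tz$.  Convexity of $\bar\Omega$ forces $\nu_0 := (tz - y_0)/|tz - y_0|$ to be an outer unit normal at $y_0$.  Since $\etb \in \bar\Omega$, $|y_0 - tz| \le |{-tz}| = t$, so $|y_0| \le 2t \le 4t$; hence $y_0$ lies in the range indexing $\Gamma_t$.  A second use of $\etb \in \bar\Omega$ gives $\nu_0 \cdot y_0 \ge 0$, and therefore
\[
\nu_0 \cdot (tz) \,=\, \nu_0 \cdot y_0 + |tz - y_0| \,>\, 0,
\]
contradicting $tz \in \Gamma_t$.

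For the Hausdorff bound, since $W_t \subset V_t$ it suffices to project an arbitrary $z \in V_t$ into $W_t$.  From $tz \in \Omega$ and the supporting hyperplane inequality, every $y$ in the defining range satisfies $\nu(y)\cdot z \le \nu(y)\cdot y /t \le M_{\etb}(4t)/t =: \epsilon$.  Using the Lipschitz bound on $\pa\Omega$, after a rotation I may realize $\pa\Omega$ near $\etb$ as the graph of a convex Lipschitz function of slope at most $L$, so every outer normal at a point of $\pa\Omega \cap B_{4t}$ satisfies $\nu(y)\cdot e_n \le -\delta$ with $\delta := 1/\sqrt{1+L^2}$.  Setting $\tilde z := (z + s e_n)/|z + s e_n|$ with $s := \epsilon/\delta$ gives
\[
\nu(y)\cdot \tilde z \,\le\, \frac{\epsilon - s\delta}{|z + s e_n|} \,\le\, 0
\]
for every such $y$, so $\tilde z \in W_t$, and a direct computation yields $d_{\mS^{n-1}}(z,\tilde z) \le C_1 s \le C_1 M_{\etb}(4t)/t$ with $C_1$ depending only on $L$.

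The subtlety to watch is coordinating the radius $4t$ with the two jobs $\Gamma_t$ must do: the index set must be wide enough that the nearest-point argument above places $y_0$ inside it (forcing the factor $2$ in $|y_0| \le 2|x|$), while the existence of a single inner direction $e_n$ that beats every outer normal by a uniform $\delta>0$ rests on the Lipschitz graph structure being valid throughout $B_{4t}$.  Because no smoothness of $\pa\Omega$ is assumed, the construction must accommodate multi-valued normals, which is why the intersection defining $\Gamma_t$ ranges over \emph{all} outer unit normals at each boundary point.
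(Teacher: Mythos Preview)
Your construction via the cone $\Gamma_t$ is correct and takes a genuinely different route from the paper.  The paper works locally: it first restricts to $t\in S_c$ (for other $t$ the bound is vacuous by choice of $C_1$), then uses Lemma~\ref{lem:graph} and Corollary~\ref{cor:graph} to parameterize $\partial V_t$, computes its geodesic curvature explicitly, and shows that a vertical translate $V_{t,\delta}$ becomes geodesically convex once $\delta \ge \sup_{x'}(r\partial_r g_t - g_t)$, a quantity it then bounds by $Ct^{-1}M_{\etb}(t)$.  Your global approach sidesteps the curvature computation entirely: geodesic convexity of $W_t$ is immediate because $\Gamma_t$ is a Euclidean cone, the nearest-point argument for $W_t\subset V_t$ is slick, and the translate-by-$se_n$ projection yields the Hausdorff bound in one line.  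This is cleaner and avoids the somewhat involved formula~\eqref{eqn:geod3}.

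The one soft spot is the step you yourself flag: the existence of a single direction $e_n$ with $\nu(y)\cdot e_n \le -\delta$ uniformly over $y\in\partial\Omega\cap B_{4t}$.  This is not automatic for every $t\in(0,1)$; it is essentially the content of Lemma~\ref{lem:graph}, and needs the same dichotomy the paper uses.  One clean way to close it without invoking a graph chart of uncontrolled size: as in the proof of Lemma~\ref{lem:graph}, the slice $V_1$ contains a spherical cap of radius $r^*$ about the north pole, and the truncated cone over this cap lies in $\Omega$ and contains a Euclidean ball $B_\rho(\alpha e_n)$ with $\rho,\alpha$ depending only on $r^*$.  The supporting hyperplane at any $y\in\partial\Omega\cap B_{4t}$ then gives $\nu(y)\cdot(\alpha e_n) \le \nu(y)\cdot y - \rho \le M_{\etb}(4t)-\rho$, hence $\nu(y)\cdot e_n \le -\rho/(2\alpha)$ whenever $M_{\etb}(4t)\le\rho/2$; in the complementary regime $t^{-1}M_{\etb}(4t)\ge\rho/2$ (since $t<1$) and the bound $A(t)\le C_1 t^{-1}M_{\etb}(4t)$ holds trivially with $C_1=2\pi/\rho$.  (A cosmetic point: your nearest-point argument only yields $tz\in\bar\Omega$, so strictly $W_t\subset\bar V_t$; this is immaterial for the Hausdorff distance and for the downstream eigenvalue comparison.)
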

\begin{rem} \label{rem:convex-approx}
In fact the control on the volume $\emph{Vol}_{\mS^{n-1}}\left(V_{t}\backslash W_{t}\right)$, together with a modest regularity statement concerning $\pa V_t$ will be sufficient to combine this proposition with Theorem \ref{thm:convex} in order to obtain a lower bound on the characteristic constants and prove Proposition \ref{prop:Main2}. 
\end{rem}
We will show how this proposition implies Proposition \ref{prop:Main2}
and then prove Proposition \ref{prop:convex-approx} at the end of this section.

For $A(t)$ as in Proposition \ref{prop:convex-approx} we combine the eigenvalue lower bounds from Theorem \ref{thm:convex} to prove:
\begin{prop} \label{prop:eigenvalue-approx}
There exists a constant $C_2>0$, depending on the Lipschitz constant of $\pa\Omega$ such that for each $t\in(0,1)$, we have the lower bound on the characteristic constants from Definition \ref{defn:DN} given by
\begin{align*} 
\left[2-\al^{+}(t) - \al^{-}(t)\right]_{+} \leq C_2t^{-1}M_{\etb}(4t).
\end{align*}
\end{prop}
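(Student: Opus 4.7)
The plan is to combine Proposition \ref{prop:convex-approx} with the sharp Friedland-Hayman inequality of Theorem \ref{thm:convex} and control the loss incurred by passing from $V_t$ to its convex approximation $W_t$. Fix $t \in (0,1)$ and let $W_t \subset V_t$ be the convex set supplied by Proposition \ref{prop:convex-approx}, so that $A(t) := \text{Haus}_{\mS^{n-1}}(V_t, W_t) \leq C_1 t^{-1} M_{\etb}(4t)$. If $\al^+(t) + \al^-(t) \geq 2$ there is nothing to prove, so I may assume both characteristic exponents stay in a bounded range; in particular, the eigenvalues $\la^\pm(t)$ are a priori bounded.

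Step 1 (apply Theorem \ref{thm:convex} on $W_t$): Define the truncated phases $\tilde U_t^{\pm} := U_t^{\pm}\cap W_t$, together with $\tilde w := u_t\restriction_{W_t}$, which is H\"older continuous with $\tilde U_t^\pm = \{\pm\tilde w > 0\}$. The free-boundary piece $\gamma_t^{\pm}\cap \mathring W_t$ lies in the interior of $W_t$, while the remainder of $\pa \tilde U_t^{\pm}$ sits on $\pa W_t$, exactly the hypothesis structure of Theorem \ref{thm:convex}. Denoting the resulting first Dirichlet--Neumann eigenvalues and characteristic exponents by $\tilde\mu^{\pm}$ and $\tilde\alpha^{\pm}$, Theorem \ref{thm:convex} gives the clean bound
\begin{align*}
\tilde\alpha^+ + \tilde\alpha^- \geq 2.
\end{align*}

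Step 2 (eigenvalue comparison): Let $v^{\pm}$ be the first Dirichlet--Neumann eigenfunctions on $U_t^{\pm}$ normalized in $L^2$, and use their restrictions to $\tilde U_t^{\pm}$ as test functions in the Rayleigh quotient for $\tilde\mu^{\pm}$. The restrictions vanish on $\gamma_t^{\pm}\cap \mathring W_t$ (the Dirichlet portion in $\mathring W_t$) and are unconstrained on $\pa W_t$ (the Neumann portion), so they are admissible. This yields
\begin{align*}
\tilde\mu^{\pm} \leq \frac{\int_{\tilde U_t^{\pm}}|\nabla_g v^{\pm}|^2\ud\sigma}{\int_{\tilde U_t^{\pm}}(v^{\pm})^2\ud\sigma} \leq \frac{\la^{\pm}(t)}{1 - \int_{U_t^{\pm}\setminus W_t}(v^{\pm})^2\ud\sigma}.
\end{align*}
A uniform $L^{\infty}$ bound on $v^{\pm}$ (from standard elliptic regularity on the unit sphere, using that $\mu^\pm$ is bounded) combined with the thin-strip volume bound $|U_t^{\pm}\setminus W_t|\lesssim A(t)$ (a consequence of the Lipschitz bound on $\pa\Om$ applied to the Hausdorff estimate of Proposition \ref{prop:convex-approx}) gives $\int_{U_t^{\pm}\setminus W_t}(v^{\pm})^2\,d\sigma \lesssim A(t)$, so $\la^{\pm}(t)\geq \tilde\mu^{\pm} - C A(t)$. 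Since the map $\mu \mapsto \alpha = -\tfrac{n-2}2 + \sqrt{\tfrac{(n-2)^2}{4} + \mu}$ is smooth on the bounded range under consideration, this transfers to $\al^{\pm}(t) \geq \tilde\al^{\pm} - C'A(t)$, and summing with Step 1 produces
\begin{align*}
\al^+(t) + \al^-(t) \geq 2 - 2C'A(t) \geq 2 - C_2\, t^{-1}M_{\etb}(4t),
\end{align*}
which is the desired estimate.

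The principal obstacle is Step 2, where three delicate points converge: (i) one needs uniform $L^{\infty}$ bounds on the first Dirichlet--Neumann eigenfunctions independent of $t$, which must survive the possibly non-smooth boundary of $V_t$; (ii) the geometry of $\tilde U_t^{\pm}\subset W_t$ must be regular enough for Theorem \ref{thm:convex} to apply (strictly speaking, one should combine with the smooth strictly convex approximation from the Appendix before passing to the limit $\tau\to 0$ in the smoothing, as hinted at in Remark \ref{rem:convex-approx}); and (iii) the volume bound $|U_t^{\pm}\setminus W_t|\lesssim A(t)$ must follow from the Hausdorff bound together with Lipschitz control on $\pa V_t$. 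These technicalities are precisely the regularity inputs for $V_t$ and its parametrization that are deferred to the Appendix.
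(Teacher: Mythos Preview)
Your proposal is correct and follows essentially the same route as the paper: apply Theorem \ref{thm:convex} on the convex approximation $W_t$, then compare eigenvalues by restricting the $V_t$-eigenfunctions and invoking an $L^\infty$ bound together with the volume estimate $|V_t\setminus W_t|\lesssim A(t)$ (the paper packages this comparison as a separate Lemma \ref{lem:eigenvalue-lower}). The paper additionally makes explicit two preliminary case splits you only gesture at---the trivial case $t\notin S_c$, which is precisely where the Lipschitz control on $\pa V_t$ from Corollary \ref{cor:graph} is secured for the remaining $t$, and the degenerate case where one of $\gamma_t^{\pm}$ misses $W_t$ entirely so that one phase sits in the thin strip $V_t\setminus W_t$---but your closing paragraph correctly flags exactly these technical inputs.
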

Combining Proposition \ref{prop:eigenvalue-approx} with the Dini condition on $t^{-1}M_{\textbf{0}}(t)$ from Assumption \ref{assum:Main} implies Proposition \ref{prop:Main2}. We are therefore left to prove Propositions \ref{prop:convex-approx} and \ref{prop:eigenvalue-approx}. To prove Proposition \ref{prop:eigenvalue-approx}, we will use the following lemma to convert estimates on the volume between $V_t$ and $W_t$ to the corresponding Dirichlet-Neumann eigenvalues. 
\begin{lem} \label{lem:eigenvalue-lower}
Let $W \subset V$ be subsets of a smooth manifold $(M,g)$, and suppose that the boundaries of $W$ and $V$ are of the following form: There exist non-empty sets $\pa V_{N}$, $\pa V_{D}$, $\pa W_{N}$, and $\pa W_{D}$ such that
\begin{align*}
\pa V = \pa V_{N}\cup \pa V_{D}, \qquad \pa W = \pa W_{N}\cup \pa W_{D},
\end{align*} 
$\pa W_{D} \subset \pa V_{D}$, and $\pa V_{N}$ is Lipschitz. Then, setting $\mu(V)$ to be the first eigenvalue of $V$, with Neumann boundary conditions on $\pa V_N$, and Dirichlet boundary conditions on $\pa V_D$, and $\mu(W)$ likewise, we have
\begin{align*}
\mu(W) \leq \mu(V) \left(1+ C\emph{Vol}_{M}(V\backslash W)\right).
\end{align*}
Here $C$ depends only on $\mu(V)$ and the Lipschitz constant of $\pa V_{N}$, and $\emph{Vol}_{M}(\cdot)$ denotes area on $M$.
\end{lem}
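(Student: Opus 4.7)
The plan is a standard domain-perturbation argument via the Rayleigh quotient. Let $u_V \in H^1(V)$ denote the first Dirichlet--Neumann eigenfunction on $V$, normalized so that $\int_V u_V^2 \ud \sigma_g = 1$, so that $\int_V |\nabla_g u_V|^2 \ud \sigma_g = \mu(V)$. Because $W \subset V$, the restriction $u_V|_W$ lies in $H^1(W)$, and the hypothesis $\pa W_D \subset \pa V_D$ combined with $u_V = 0$ on $\pa V_D$ ensures that the trace of $u_V|_W$ vanishes on $\pa W_D$; no condition beyond $H^1$-membership is required on the Neumann part $\pa W_N$. Hence $u_V|_W$ is an admissible competitor in the Rayleigh quotient for $\mu(W)$, and discarding the gradient contribution on $V\setminus W$ yields
\begin{equation*}
\mu(W) \;\leq\; \frac{\int_W |\nabla_g u_V|^2 \ud \sigma_g}{\int_W u_V^2 \ud \sigma_g} \;\leq\; \frac{\mu(V)}{1 - \int_{V\setminus W} u_V^2 \ud \sigma_g}.
\end{equation*}

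To control the denominator, I would estimate $\int_{V\setminus W} u_V^2 \ud \sigma_g \leq \|u_V\|_{L^\infty(V)}^2 \, \text{Vol}_M(V\setminus W)$ and invoke a uniform $L^\infty$ bound $\|u_V\|_\infty \leq K$ on the first eigenfunction. Such a bound follows from standard Moser iteration applied to the equation $-\Delta_g u_V = \mu(V) u_V$, with cutoff functions that vanish near $\pa V_D$ and are simply truncated near $\pa V_N$; the iteration relies only on the Sobolev embedding $H^1(V) \hookrightarrow L^{2^\ast}(V)$, whose constant on a bounded Lipschitz domain depends solely on the Lipschitz constant of $\pa V_N$ and on the ambient manifold, together with the eigenvalue $\mu(V)$ itself. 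This $L^\infty$ estimate for mixed Dirichlet--Neumann eigenfunctions on a merely Lipschitz domain is the one nontrivial technical input and is the main obstacle to verify carefully, although it is classical and can be quoted.

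With such a $K$ in hand, set $\delta = K^2 \text{Vol}_M(V\setminus W)$. In the perturbative regime $\delta \leq 1/2$, which is the range in which the lemma is actually applied in the proof of Proposition \ref{prop:eigenvalue-approx}, the elementary inequality $1/(1-\delta) \leq 1 + 2\delta$ gives
\begin{equation*}
\mu(W) \;\leq\; \mu(V)\bigl(1 + 2K^2 \text{Vol}_M(V\setminus W)\bigr),
\end{equation*}
which is the claim with $C = 2K^2$. For $\delta > 1/2$ the volume $\text{Vol}_M(V\setminus W)$ is bounded below by a fixed positive constant, so the stated bound holds after enlarging $C$, using any crude a priori upper bound on $\mu(W)$ available from the ambient geometry. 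The essential content of the argument thus boils down to two ingredients: the inclusion $\pa W_D \subset \pa V_D$, which legitimizes $u_V|_W$ as a test function, and the uniform $L^\infty$ bound on $u_V$.
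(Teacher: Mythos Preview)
Your argument is essentially the same as the paper's: restrict the normalized eigenfunction $u_V$ to $W$, use it as a test function in the Rayleigh quotient for $\mu(W)$, and control the lost $L^2$-mass on $V\setminus W$ via an $L^\infty$ bound on $u_V$. The one substantive difference is how the $L^\infty$ bound is obtained. You propose Moser iteration directly on the mixed problem; the paper instead reflects $u_V$ across the Lipschitz Neumann boundary $\pa V_N$, obtaining a solution of a divergence-form elliptic equation with bounded measurable coefficients on the doubled domain that vanishes on its (Dirichlet) boundary, and then invokes Theorem~8.25 of Gilbarg--Trudinger. The reflection trick is a bit cleaner because it sidesteps the need to verify that the Sobolev constants in the iteration are uniform up to the Neumann portion of the boundary, and it makes the dependence on the Lipschitz constant of $\pa V_N$ transparent. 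Your Moser approach is also valid but you should be explicit that the cutoff functions near $\pa V_N$ need not vanish there (since $\pa_\nu u_V = 0$ kills the boundary term in the Caccioppoli step), and that the Sobolev embedding you use holds uniformly on Lipschitz domains. Your remark about the non-perturbative regime $\delta>1/2$ is fine; the paper's proof, like yours, really only treats the perturbative case, which is the only one used downstream.
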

\begin{proof}{Lemma \ref{lem:eigenvalue-lower}}
Let $w_{V}$ be a $L^2(V)$-normalised eigenfunction of $V$, with eigenvalue $\mu(V)$. We claim that there exists a constant $C$,  depending only on $\mu(V)$ and the Lipschitz constant of $\pa V_{N}$ such that
\begin{align} \label{eqn:eigenvalue-lower1}
\norm{w_{V}}_{L^{\infty}(V)} \leq C.
\end{align}
Assuming that \eqref{eqn:eigenvalue-lower1} holds, we can now obtain an upper bound on $\mu(W)$: Let $w_{W}$ be the restriction of $w_{V}$ to $W$. Then, by the assumption on the sets $W$ and $V$, $w_{W}$ is an admissible test function, and so
\begin{align*}
\mu(W) \leq \frac{\displaystyle \int_{W} \left|\nabla_{g}w_{W}\right|^2\ud \sigma}{\displaystyle \int_{W} \left|w_{W}\right|^2\ud \sigma} \leq  \frac{\displaystyle \int_{V} \left|\nabla_{g}w_{V}\right|^2\ud \sigma}{\displaystyle \int_{W} \left|w_{W}\right|^2\ud \sigma}  = \frac{\mu(V)}{\displaystyle \int_{W} \left|w_{W}\right|^2\ud \sigma}.
\end{align*}
Since
\begin{align*}
 \int_{W} \left|w_{W}\right|^2\ud \sigma  = 1 -  \int_{V\backslash W} \left|w_{V}\right|^2\ud \sigma \geq 1-C^2\text{Vol}_{M}(V\backslash W),
\end{align*}
this gives the desired bound, provided \eqref{eqn:eigenvalue-lower1} holds. To obtain \eqref{eqn:eigenvalue-lower1}, let $\tilde{v}_V$ be the reflection of $v_V$ across the Lipschitz boundary $\pa V_N$. Since $v_{V}$  satisfies Neumann boundary conditions on $\pa V_N$  the function $\tilde{v}_V$ is the solution of an elliptic equation in divergence form, with bounded measurable coefficients on the doubled domain of $V$. Moreover, $\tilde{v}_V$ vanishes on the boundary of this domain, and the coefficients of the elliptic operator can be bounded in terms of the Lipschitz constant of $\pa V_N$, and $\mu(V)$. Therefore, Theorem 8.25 in \cite{GT} implies the desired $L^{\infty}$ control on $\tilde{v}_V$ and hence $v_V$ itself.
\end{proof}
\begin{proof}{Proposition \ref{prop:eigenvalue-approx}}
In order to apply Lemma \ref{lem:eigenvalue-lower} to $V_t$ and $W_t$, we need to establish the Lipschitz regularity of $\pa V_t$. Recall from Definition \ref{defn:Sc}, the sets $S_c = \{t\in(0,1):s^{-1}M_{\textbf{0}}(s)\leq c \text{ for all } s\in[\tfrac{1}{4}t,4t]\}$. Since $M_{\textbf{0}}(s)$ is increasing in $s$, if $t\notin S_c$, then  by taking $C_2$ sufficiently large, depending on $c$, the estimate in Proposition \ref{prop:eigenvalue-approx} follows immediately. On the other hand, for $t\in S_c$, with $c$ sufficiently small, the sets $V_t$ are more regular.  
 \begin{lem} \label{lem:graph}
There exists a constant $c>0$, depending only on the Lipschitz norm of $\pa\Omega$, and an orientation of $\Omega$ with the following property: For $t\in S_{c}$, the part of the boundary $\pa\Omega \cap \left(B_{2t}\backslash B_{t/2}\right)$ can be written as the graph $x_n = g(x')$ of a convex function $g$, with Lipschitz constant depending only on that of $\pa\Omega$. Here we have written $x = (x',x_n)\in\R^n$.
 \end{lem}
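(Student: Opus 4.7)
The plan is to choose a single orientation $e_n$ based on the tangent cone of $\Omega$ at $\mathbf{0}$, and then use the definition of $S_c$ to force $\nu(y)\cdot e_n \leq -\kappa(L)$ for every $y\in\partial\Omega\cap(B_{2t}\setminus B_{t/2})$, with $\kappa(L)>0$ depending only on the Lipschitz constant $L$ of $\partial\Omega$. Once this uniform bound on the outward normal is in hand, the representation as a Lipschitz graph $x_n = g(x')$ follows from the implicit function theorem, and convexity of $g$ is inherited from that of $\Omega$ together with the orientation choice.

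For the orientation, I would introduce the tangent cone $C_0 := \{r\omega : r>0,\,\omega\in V_0\}\cup\{\mathbf{0}\}$, which is a convex cone because $V_0 = \bigcup_{t>0} V_t$ is geodesically convex on $\mathbb{S}^{n-1}$. Since $\partial\Omega$ is Lipschitz at $\mathbf{0}$ with constant $L$, the same holds for $\partial C_0$: after a suitable rotation we can write $C_0 = \{(x',x_n) : x_n > g_0(x')\}$ with $g_0 : \mathbb{R}^{n-1}\to[0,\infty)$ convex, positively $1$-homogeneous, and $\mathrm{Lip}(g_0)\leq L$. A direct computation of the polar cone then gives $C_0^\circ \subset \{v : v\cdot e_n \leq -|v|/\sqrt{1+L^2}\}$, so it suffices to show that $\mathrm{dist}(\nu(y),C_0^\circ)$ is $O_L(c)$ uniformly for $t\in S_c$.

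To produce this closeness, I use $t\in S_c$ to get $\nu(y)\cdot y \leq c|y|\leq 4ct$, and then convexity of $\Omega$ to upgrade this to $\nu(y)\cdot x \leq 4ct$ for every $x\in\Omega$. Testing against $x = t\omega$ with $\omega$ in the convex approximation $W_t\subset V_t$ from Proposition~\ref{prop:convex-approx} gives $\nu(y)\cdot\omega \leq 4c$ for all $\omega\in W_t$; by homogeneity, $\nu(y)\cdot x \leq 4c|x|$ for all $x\in\mathrm{cone}(W_t)$. A standard metric projection argument (Moreau's decomposition for convex cones) then yields $\mathrm{dist}(\nu(y),\mathrm{cone}(W_t)^\circ)\leq 4c$. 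The final ingredient is to transfer from $\mathrm{cone}(W_t)^\circ$ back to $C_0^\circ$, using the Hausdorff bound $A(t)\leq 4C_1 c$ of Proposition~\ref{prop:convex-approx} together with the regularity of $\partial V_t$. Taking $c = c(L)$ small enough produces $\nu(y)\cdot e_n \leq -\tfrac12(1+L^2)^{-1/2}$, and hence the graph with $\mathrm{Lip}(g)$ depending only on $L$.

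The main obstacle I anticipate is this last transfer step. Because $W_t \subset V_0$ is in general strictly smaller, the polar cones satisfy the reverse inclusion $C_0^\circ \subset \mathrm{cone}(W_t)^\circ$, so the distance to the smaller set $C_0^\circ$ is not directly controlled by the distance to $\mathrm{cone}(W_t)^\circ$. Making the transfer quantitative requires one to quantify how much of $V_0$ the set $W_t$ covers; this is where the scale-invariant nature of $S_c$ becomes essential, combined with Proposition~\ref{prop:convex-approx} and a modest regularity statement on $\partial V_t$ (as referenced in Remark~\ref{rem:convex-approx}).
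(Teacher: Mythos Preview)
Your approach has a circularity problem that you did not flag. You invoke Proposition~\ref{prop:convex-approx} to produce the geodesically convex approximation $W_t\subset V_t$, but in the paper the proof of Proposition~\ref{prop:convex-approx} explicitly restricts to those $t$ for which Lemma~\ref{lem:graph} and Corollary~\ref{cor:graph} apply, and then uses the Lipschitz parameterization guaranteed by those results to compute geodesic curvatures. So Lemma~\ref{lem:graph} is logically prior to Proposition~\ref{prop:convex-approx}, and you cannot cite the latter here. Without $W_t$ you lose the convexity that drives your Moreau-decomposition step, and you are left with $\nu(y)\cdot\omega\le 4c$ only for $\omega\in V_t$, which is not a convex cone. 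The transfer step you already identified as the main obstacle is then compounded by this missing ingredient.

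The paper's route avoids both the tangent cone and polar-cone machinery entirely. It fixes the orientation not from $V_0$ but from $V_1$: that slice contains a geodesic ball of some radius $r^*>0$ (depending only on the Lipschitz constant), and one rotates so its center is the north pole. Monotonicity of the slices then puts this ball inside every $V_t$, $t<1$, while each $V_t$ lies in a hemisphere and so misses a neighborhood of the south pole. For a boundary point $y$ in the annulus one rotates in $x'$ so that $y$ lies in the $(x_1,x_n)$-plane, and argues in two dimensions: the supporting hyperplane at $y$ together with the ball in $V_t$ forces $\nu_1^2+\nu_n^2$ to be bounded below; the $S_c$ condition makes $(\nu_1,\nu_n)$ nearly orthogonal to $y/|y|$ in that plane; and since $y/|y|$ stays $r^*$-away from both poles, this yields $|\nu_n|\ge c^*$, with the sign fixed by the north-pole inclusion. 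This is both shorter and self-contained, and is what makes the subsequent curvature computation in Proposition~\ref{prop:convex-approx} possible.
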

 \begin{cor} \label{cor:graph}
For $c$ and the orientation of $\Omega$ as in Lemma \ref{lem:graph}, and for $s\in[\tfrac{1}{2}t,2t]$ with $t\in S_c$, the slices $V_s$ are star-shaped with respect to the north pole in $\mathbb{S}^{n-1}$. Moreover, there exists a neighbourhood around each $y\in\pa V_s$ on which $\pa V_s$ can be parameterized as submanifold $(x'(\tau),g(x'(\tau)))$ for $\tau$ in an open set in $\R^{n-2}$. The Lipschitz constant of this parameterization and the size of the neighbourhood can be taken to only depend on that of $\pa\Omega$.
\end{cor}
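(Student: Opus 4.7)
My plan is to deduce both conclusions from the graph representation of $\pa\Omega$ in Lemma \ref{lem:graph}. Write $\pa\Omega\cap(B_{2t}\setminus B_{t/2}) = \{x_n = g(x')\}$, where $g$ is convex, has Lipschitz constant $L$ depending only on $\pa\Omega$, and (by convex extension) satisfies $g(0)=0$, with $\Omega = \{x_n > g(x')\}$ locally. The orientation from Lemma \ref{lem:graph} is chosen so that $e_n$ lies in the interior of the tangent cone $T_{\etb}\Omega$. For $t\in S_c$ and $c$ small in terms of $L$, the smallness $s^{-1}M_{\etb}(s)\leq c$ for $s\in[\tfrac{t}{4},4t]$ forces $\pa\Omega$ to track its tangent cone closely throughout the annulus, and this together with the Lipschitz bound $|g(x')|\leq L|x'|$ yields $V_s \subset \{y\in\mS^{n-1} : y_n > \sin\delta\}$ for some $\delta = \delta(L,c) > 0$.

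\textbf{Star-shapedness.} Fix $\theta = (\sin\phi_0\,\omega,\cos\phi_0)\in V_s$, so by the hemisphere bound $\phi_0 < \tfrac{\pi}{2}$. Along the geodesic $\gamma(\phi) = (\sin\phi\,\omega,\cos\phi)$ with $\phi\in[0,\phi_0]$, I set $\lambda=\sin\phi/\sin\phi_0\in[0,1]$. Convexity of $g$ together with $g(0)=0$ gives $g(\lambda y)\leq \lambda g(y)$, so with $y = s\sin\phi_0\,\omega$ I obtain
\[
g(s\sin\phi\,\omega) \,\leq\, \tfrac{\sin\phi}{\sin\phi_0}\,g(s\sin\phi_0\,\omega) \,<\, \tfrac{\sin\phi}{\sin\phi_0}\,s\cos\phi_0 \,\leq\, s\cos\phi,
\]
where the last step is the elementary inequality $\sin\phi\cos\phi_0 \leq \cos\phi\sin\phi_0$ valid for $0 \leq \phi \leq \phi_0 < \tfrac{\pi}{2}$. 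Hence $s\gamma(\phi)\in\Omega$ for every $\phi\in[0,\phi_0]$, so $V_s$ is star-shaped with respect to $e_n$.

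\textbf{Local parametrization.} Star-shapedness implies that $\pa V_s$, lifted back to $\pa\Omega\cap s\mS^{n-1}$, coincides with $\{(x', g(x')) : |x'|^2 + g(x')^2 = s^2\}$. Fix $y_0\in \pa V_s$ with $sy_0 = (x_0', g(x_0'))$. I plan to apply a Lipschitz implicit function theorem to the Lipschitz function $\Psi(x') := |x'|^2 + g(x')^2 - s^2$. For any $p$ in the convex subdifferential of $g$ at $x_0'$, the vector $2x_0' + 2g(x_0')p$ lies in the generalized gradient of $\Psi$, and the upper-hemisphere bound combined with $|p|\leq L$ gives a lower bound on $|2x_0' + 2g(x_0')p|$ depending only on $L$ and $c$. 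Choosing the coordinate direction of $x'$ in which this subdifferential element is largest in absolute value, I can solve for that coordinate as a Lipschitz function of the remaining $n-2$ coordinates $\tau$, with Lipschitz constant and neighborhood size depending only on $L$ and $c$. Composing with $g$ produces the required parametrization $\tau \mapsto (x'(\tau), g(x'(\tau)))$ of $\pa V_s$.

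The main obstacle I expect is the quantitative upper-hemisphere containment $V_s \subset \{y_n > \sin\delta\}$: this is the step that converts the Dini-type smallness $s^{-1}M_{\etb}(s)\leq c$ into concrete geometric information about $V_s$, and it relies essentially on the orientation of Lemma \ref{lem:graph} anchoring $e_n$ inside the tangent cone $T_{\etb}\Omega$. Once this containment is in hand, star-shapedness reduces to the one-line convexity inequality above, and the Lipschitz parametrization reduces to a single application of the implicit function theorem for Lipschitz maps, with all uniform constants propagating in terms of the Lipschitz constant of $\pa\Omega$.
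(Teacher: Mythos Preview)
Your instinct to reduce both statements to the convexity of a globally defined function $G$ with $G(0)=0$ (your ``convex extension'' of $g$) is natural, and for points in the upper hemisphere both your star-shapedness chain and your subgradient bound $p\cdot x_0'\ge G(x_0')$ work cleanly. The problem is the step you yourself flag as the main obstacle: the containment $V_s\subset\{y_n>\sin\delta\}$ is simply false. Take $\Omega=\{x_n>-\epsilon|x'|\}$ for small $\epsilon>0$; this is a convex cone, so $M_{\etb}\equiv 0$ and every $t$ lies in $S_c$, the orientation of Lemma~\ref{lem:graph} places $e_n$ at the north pole, yet $V_s=\{y_n>-\epsilon|y'|\}$ dips below the equator. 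Once $\theta\in V_s$ has $\phi_0>\pi/2$, your parameter $\lambda=\sin\phi/\sin\phi_0$ exceeds $1$ on the middle segment $(\pi-\phi_0,\phi_0)$, so the convexity inequality $G(\lambda y)\le\lambda G(y)$ is unavailable and the star-shapedness chain breaks. Likewise, for boundary points with $g(x_0')<0$ the inequality $p\cdot x_0'\ge g(x_0')$ no longer forces $g(x_0')\,p\cdot x_0'\ge 0$, and your lower bound on $|2x_0'+2g(x_0')p|$ evaporates.

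What fills this gap, and what the paper does, is to use the condition $t\in S_c$ \emph{pointwise} rather than trying to convert it into a hemisphere containment. At a boundary point $y\in\pa V_s$ (rotated so $y'=(y_1,0,\dots,0)$, $y_1>0$) the bound $\nu(sy)\cdot sy\le cs$ reads $y_1\,\pa_{x_1}g(sy')-y_n\le c\sqrt{1+|\nabla g|^2}$, i.e.\ an \emph{upper} bound on $\pa_{x_1}g$. For $y_n<0$ one multiplies this by $y_n$ to get $y_n\,\pa_{x_1}g\ge (y_n^2+cy_n\sqrt{1+|\nabla g|^2})/y_1$, and hence $\pa_{x_1}v(y')\ge 2/y_1 - C c$, positive for $c$ small since $y_1\ge 1/\sqrt{1+L^2}$. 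The case $y_n\ge 0$ is covered by the convexity inequality $\pa_{x_1}g\ge y_n/y_1$ that you already have. The star-shapedness argument in the paper proceeds in the same spirit: it uses $\nu_n\le -c_n$ together with $\nu\cdot y\le c$ at each boundary point to rule out re-entry along the great circle via the supporting hyperplane, without ever claiming that $\pa V_s$ stays in the upper hemisphere. So your overall plan is sound, but the hemisphere shortcut must be replaced by this direct use of the Dini smallness at each boundary point.
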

We will prove Lemma \ref{lem:graph} and Corollary \ref{cor:graph} in the Appendix.
 \begin{rem} \label{rem:graph}
We will in fact show in the proof of Corollary \ref{cor:graph} that after a rotation so that $y$ lies in the $(x_1x_n)$-plane, the Lipschitz parameterization of $\pa V_s$ can be written as $(x_1(\tau),\tau,s^{-1}g(sx_1(\tau),s\tau)$, where $\tau$ is contained in an open neighbourhood of the origin in $\R^{n-2}$.
\end{rem}
Now fix $t\in S_c$, with $c$ as in Lemma \ref{lem:graph}. Let $W_t$ be as in Proposition \ref{prop:convex-approx} and let $U_t^{\pm}\subset V_t$ and $\gamma_{t}^{\pm}$ be as in \eqref{eqn:Ui}. Suppose first that one of $\gamma_{t}^{\pm}$ does not intersect $W_{t}$. Then, one of $U_t^{\pm}$  is contained within $V_{t}\backslash W_{t}$. Therefore, there exists a constant $c_0>0$ such that if $A(t) = $ Vol$_{\mathbb{S}^{n-1}}(V_t\backslash W_t) <c_0$, then  we have $\al^{+}(t) \geq 2$ or $\al^{-}(t) \geq 2$, and the estimate follows immediately. But if $A(t)\geq c_0$, then by Proposition \ref{prop:convex-approx}, the estimate in the proposition is trivial by taking $C_2$ sufficiently large.
\\
\\
Therefore, we assume that at least one of $\gamma_{t}^{\pm}$ intersects $W_t$. We write
\begin{align*}
W_t^{+} = \{u_t>0\}\cap W_t, \quad W_t^{-} = \{u_t<0\}\cap W_t.
\end{align*}
Setting $\mu^{\pm}(t)$ to be the first Dirichlet-Neumann eigenvalue on $W_t^{\pm}$, with Dirichlet boundary conditions on $\gamma_t^{\pm}\cap \bar{W}_t^{\pm}$, and Neumann boundary conditions on the rest of $\pa W_t^{\pm}$, we have the following two estimates: Since $W_{t}\subset \mS^{n-1}$ is geodesically convex, by Theorem \ref{thm:convex} (or the classical Friedland-Hayman inequality \cite{FH}, when $W_t = \mS^{n-1}$), we have
\begin{align} \label{eqn:eigenvalue-approx1}
\left(-\frac{n-2}{2}+ \sqrt{\frac{(n-2)^2}{4} + \mu^{+}(t)}\right)+\left( - \frac{n-2}{2} + \sqrt{\frac{(n-2)^2}{4}+ \mu^{-}(t)}\right)  \geq 2.
\end{align}
By Corollary \ref{cor:graph}, $\pa V_t$ is Lipschitz and so we can apply Lemma \ref{lem:eigenvalue-lower}, we have 
\begin{align} \label{eqn:eigenvalue-approx2}
\mu^{\pm}(t) \leq \la^{\pm}(t)\left(1+C A(t)\right),
\end{align}
and combining \eqref{eqn:eigenvalue-approx1} and \eqref{eqn:eigenvalue-approx2} with Proposition \ref{prop:convex-approx} gives the desired estimate. 
\end{proof}

We are left to prove Proposition \ref{prop:convex-approx}. 
\begin{proof}{Proposition \ref{prop:convex-approx}}
We first note that by taking $C_1$ sufficiently large, we can restrict to those values of $t$ for which Lemma \ref{lem:graph} and Corollary \ref{cor:graph} apply. We first obtain a lower bound on the principal geodesic curvatures of $\pa V_t$ at a point $y = (y',y_n)$, and then construct a nearby set with positive principal geodesic curvatures. After rotating in the $x'$ variables, we may assume that $y = (y_1,0,\ldots,0,y_n)$, with $y_1>0$. For the first step, we take a direction in the $x_2,\ldots,x_{n-1}$-variables, which after a rotation we take to be the $x_2$-direction, and intersect $\pa V_t$ with the hyperplane $x_3=\cdots =x_{n-1}= 0$. This leads to a curve, and we will find a lower bound on its geodesic curvature. Using Lemma \ref{lem:graph}, this intersection is equal to $(x_1,x_2,g_t(x_1,x_2))$, where we have written $g_t(x_1,x_2) = t^{-1}g(tx_1,tx_2)$, and to ease notation we have omitted writing out $0$ for the remaining coordinates. By Corollary \ref{cor:graph} and Remark \ref{rem:graph}, near the point $y$ this intersection is a curve $\alpha(\zeta)$ for $\zeta$ contained in an interval around $0$, with $\alpha(0) = (y_1,0,g_t(y_1,0))$ and 
\begin{align*}
\alpha(\zeta) = (x_1(\zeta),\zeta,g_t(x_1(\zeta),0)). 
\end{align*}
Here $x_1(\zeta)$ is a Lipschitz function. Since $\alpha(\zeta)$ lies on $\mS^2$, the geodesic curvature of the curve at $\alpha(\zeta)$  equals
\begin{align} \label{eqn:geod1}
\kappa(\zeta) = \frac{\alpha(\zeta) \times\alpha'(\zeta) \cdot \alpha''(\zeta)}{|\alpha'(\zeta)|^3}.
\end{align}
Evaluating \eqref{eqn:geod1} at $\zeta=0$, we have 
\begin{align} \label{eqn:geod2}
\kappa(0) = \frac{-x_1''(0)(g_t -y_1\pa_{x_1}g_t) +y_1(\pa_{x_2}^2g_t  + 2x_1'(0)\pa_{x_1}\pa_{x_2}g_t  + (x_1'(0))^2\pa_{x_1}^2g_t )}{\left(1+(x_1'(0)\pa_{x_1}g_t+\pa_{x_2}g_t  )^2 \right)^{3/2}},
\end{align}
where $g_t$ and its derivatives are evaluated at $(y_1,0)$. Moreover, since $\alpha(\zeta)$ lies on $\mathbb{S}^{2}$, we have $x_1(\zeta)^2+\zeta^2+g_t\left(x_1(\zeta),\zeta\right)^2 = 1$, and so differentiating this twice to eliminate $x_1''(0)$, we can rewrite the expression in \eqref{eqn:geod2} as
\begin{align} \nonumber
& \frac{ \left(\pa_{x_2}^2g_t+2x_1'(0)\pa_{x_1}\pa_{x_2}g_t + (x_1'(0))^2\pa_{x_1}^2g_t\right)\left(g_t^2 +(y_{1})^2\right)}{(y_1+g_t\pa_{x_1}g_t)\left(1+(x_1'(0)\pa_{x_1}g_t + \pa_{x_2}g_t )^2 \right)^{3/2}} \\ \label{eqn:geod3}
& +\frac{\left(1+(x_1'(0))^2 +( x_1'(0)\pa_{x_1}g_t+ \pa_{x_2}g_t )^2\right)(g_t - y_1\pa_{x_1}g_t) }{(y_1+ g_t\pa_{x_1}g_t)\left(1+(x_1'(0)\pa_{x_1}g_t + \pa_{x_2}g_t )^2 \right)^{3/2}}  . 
\end{align}
The denominator in both terms is positive, as is the numerator of the first fraction (by the Euclidean convexity of $g_t$). The quantity $y_1+ g_t\pa_{x_1}g_t$ in the denominator of the second term can be bounded from below for $t\in S_c$, with $c>0$ sufficiently small. Combining this with the Lipschitz bounds on $x_1(\zeta)$ and $g_t$, means that we have a lower bound on the second fraction of 
\begin{align*}
\left(1+(x_1'(0))^2 +(x_1'(0)\pa_{x_1}g_t+ \pa_{x_2}g_t )^2\right)(g_t - y_1\pa_{x_1}g_t)  \geq - C\left(y_1\pa_{x_1}g_t - g_t\right).
\end{align*}
Now consider the set $V_{t,\delta}$, which we obtain from $V_t$ via a vertical shift of $\delta$ in the $x_n$-direction, for a small constant $\delta$. That is, $(x',x_n)\in\pa V_{t,\delta}$ precisely when $x_n = g_{t,\delta}(x') = g_t(x')+\delta$. Just as for $V_t$ above, we obtain a curve $\alpha_{\delta}(\zeta)$ for part of the boundary of $V_{t,\delta}$. Let $x_1 = y_{1,\delta}>0$ be the value where the projection of $\pa V_{t,\delta}$ onto the $x'$-variables passes through the $x_1$-axis. Then,
\begin{align*}
y_{1,\delta}\pa_{x_1}g_{t,\delta}(y_{1,\delta},0) - g_{t,\delta}(y_{1,\delta},0) = y_{1,\delta}\pa_{x_1}g_{t}(y_{1,\delta},0) - g_{t}(y_{1,\delta},0) -\delta.
\end{align*}
In particular, if $\delta > y_{1,\delta}\pa_{x_1}g_{t}(y_{1,\delta},0) - g_{t}(y_{1,\delta},0)$, then we have
\begin{align*}
y_{1,\delta}\pa_{x_1}g_{t,\delta}(y_{1,\delta},0) - g_{t,\delta}(y_{1,\delta},0) < 0,
\end{align*}
and the geodesic curvature of $\alpha_{\delta}$ is positive. Therefore, letting $r$ be the radial coordinate in the $x'$-variables, and setting
\begin{align} \label{eqn:geod4}
\delta(t) = \sup_{|x'|\leq 1}r\pa_rg_t(x')-g_t(x'),
\end{align}
we can form a geodesically convex set $V_{t,\delta}$ such that
\begin{align*}
\text{Haus}_{\mS^{n-1}}\left(\pa V_t, \pa V_{t,\delta(t)}\right) \leq \tilde{C}\delta(t).
\end{align*}
Setting $W_t$ to be $V_{t,\delta(t)}$, to complete the proof of the proposition we need to bound the value of $\delta(t)$ from \eqref{eqn:geod4}: Since
\begin{align*}
r\pa_rg_t(x') - g_t(x') & = \left(\nabla g_t(x'),-1\right)\cdot \left(x',g_t(x')\right), \\
& = \left(1+\left|\nabla g_t(x')\right|^2\right)^{1/2} \nu(tx',tg_t(x')) \cdot (x',g_t(x')) ,
\end{align*}
 there exists a  constant $C$ such that 
\begin{align*}
\delta(t) \leq Ct^{-1}\sup_{{x}\in\pa\Omega, |x| \leq t} \nu({x})\cdot {x} \leq Ct^{-1}M_{\textbf{0}}(t),
\end{align*}
as required. 
\end{proof}

\section{Estimates on a Dirichlet-Neumann harmonic measure and proof of Propositions \ref{prop:Main1} and \ref{prop:Main3}} \label{sec:harmonic}

In this section we prove Propositions \ref{prop:Main1} and \ref{prop:Main3} under the Dini condition, Assumption \ref{assum:Main}. To do this, we first need to prove
a reverse H\"older estimate for harmonic measure under
mixed Dirichlet/Neumann boundary conditions.   

Let $U$ be a convex domain with $\textbf{0}\in \bar U$, and set 
\[
U_1 = B_1(\textbf{0}) \cap U, 
\quad D = \pa B_1(\textbf{0})\cap \bar{U}, 
\quad N = B_1(\textbf{0})\cap\pa U.
\]
For $z\in U_1$, define $G_1^z$ to be Green's function for $U_1$ with
pole at $z$ and Neumann conditions on $N$ and Dirichlet conditions on $D$.

We will impose a transversality condition between the
hypersurfaces $N$ and $D$ as follows.  There is a constant
$c^*>0$ such that for every $0< r < r_1$
and every $p \in \pa B_1(\textbf{0}) \cap \pa U$, there is a unit vector $\tau_1$
such that 
\begin{align} \label{eqn:harmonic1}
\tau_1 \cdot \nu({x}) \leq -c^* \text{ for } x\in D\cap B_{2r}(p), \qquad \tau_1\cdot \nu({x}) \geq c^* \text{ for } x\in N\cap B_{2r}(p),
\end{align}
with $\nu(x)$ the outer unit normal to $U_1$.   In other words, if $\tau_1$
is the downwards vertical direction, the Dirichlet portion is the graph above $U_1$, 
the Neumann portion is the graph below $U_1$, and they
meet at a strictly positive angle.   Since the normal $\nu(x)$ need not be 
unique on the convex boundary $N$, we require the condition \eqref{eqn:harmonic1}
for the unit normal to every support plane.

\begin{prop} \label{prop:harmonic}
With the notations above, let $G_1 = G_1^z$ be Green's function
for $U_1$ with pole $z \in U \cap B_{1/2}(\etb)$.  Set $K_1 = \pa_\nu G_1$,
the density of harmonic measure with respect to surface measure $d\sigma$ 
on the Dirichlet portion of the boundary, $D$ and define the surface
ball
\begin{align*}
S_r(p) = B_r(p) \cap D.
\end{align*}
Then for every $p\in D$, $0 < r < r_1$, 
\begin{align} \label{eqn:harmonic2}
\left(\frac{1}{r^{n-1}}\int_{S_{r}(p)}K_1^2 \ud \sigma \right)^{1/2} \leq C\frac{1}{r^{n-1}} \int_{S_{r}(p)}K_1 \ud \sigma ,
\end{align}
for a constant $C$ depending only on $c^*$ and the Lipschitz constant of $U$. 
\end{prop}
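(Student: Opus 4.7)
The plan is to adapt Dahlberg's classical Rellich-identity proof of reverse Hölder for harmonic measure on Lipschitz domains to the mixed Dirichlet--Neumann setting, using the transversality condition \eqref{eqn:harmonic1} to choose the Rellich vector field. Fix $p\in D$ and $0<r<r_1$; assume $r$ small enough that $B_{2r}(p)$ misses the pole $z\in U\cap B_{1/2}(\etb)$. Let $\tau_1$ be the transversality direction from \eqref{eqn:harmonic1} and let $\phi\in C_c^\infty(B_{2r}(p))$ with $\phi\equiv 1$ on $B_r(p)$ and $|\nabla\phi|\lesssim r^{-1}$. The Rellich vector field is $V=-\phi\tau_1$. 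Applying the identity
$$
\int_{\pa U_1}\bigl[2(V\cdot\nabla G_1)(\pa_\nu G_1)-(V\cdot\nu)|\nabla G_1|^2\bigr]\,d\sigma
=\int_{U_1}\bigl[2\,\pa_iV^j\pa_iG_1\pa_jG_1-(\operatorname{div}V)|\nabla G_1|^2\bigr]\,dx
$$
to $u=G_1$: on $D$, using $G_1=0$ and $\nabla G_1=(\pa_\nu G_1)\nu=K_1\nu$, the integrand reduces to $(V\cdot\nu)K_1^2$; on $N$, using $\pa_\nu G_1=0$, it reduces to $-(V\cdot\nu)|\nabla_T G_1|^2$. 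By \eqref{eqn:harmonic1}, $V\cdot\nu\geq c^*\phi$ on $D\cap B_{2r}(p)$ and $V\cdot\nu\leq -c^*\phi$ on $N\cap B_{2r}(p)$, so both boundary contributions are nonnegative and together dominate $c^*\int_{S_r(p)}K_1^2\,d\sigma$.

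The interior integrand involves only $\nabla V$ and $\operatorname{div}V$, both bounded by $|\nabla\phi|$, giving the estimate
$$
\int_{S_r(p)}K_1^2\,d\sigma\lesssim r^{-1}\int_{U_1\cap B_{2r}(p)}|\nabla G_1|^2\,dx.
$$
A Caccioppoli argument (multiply $\Delta G_1=0$ by $\phi^2 G_1$ and integrate; both boundary terms vanish thanks to $G_1=0$ on $D$ and $\pa_\nu G_1=0$ on $N$) yields
$$
\int_{U_1\cap B_r(p)}|\nabla G_1|^2\,dx\lesssim r^{-2}\int_{U_1\cap B_{2r}(p)}G_1^2\,dx.
$$
To bound the right-hand side, I would prove the pointwise estimate
$$
\sup_{x\in U_1\cap B_{2r}(p)}G_1(x)\lesssim r^{-(n-2)}\int_{S_{Cr}(p)}K_1\,d\sigma
=r^{-(n-2)}\omega_1^z(S_{Cr}(p)),
$$
by combining a Carleson-type estimate for the mixed BVP (that $G_1(\cdot,z)$ on the tent $U_1\cap B_{2r}(p)$ is controlled by its value at a non-tangential corkscrew point $A_r$) with the standard non-tangential relation $G_1(A_r,z)\sim r^{-(n-2)}\omega_1^z(S_r(p))$. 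Together with $|U_1\cap B_{2r}(p)|\lesssim r^n$, this gives $\int G_1^2\lesssim r^{-(n-4)}(\int_{S_{Cr}(p)}K_1)^2$, and chaining everything produces $\int_{S_r(p)}K_1^2\lesssim r^{-(n-1)}(\int_{S_{Cr}(p)}K_1)^2$. A doubling estimate for $\omega_1^z$ (which follows from the maximum principle and transversality) lets us replace $S_{Cr}(p)$ by $S_r(p)$ on the right, and taking square roots yields \eqref{eqn:harmonic2}.

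The main obstacle is the Carleson/non-tangential estimate for $G_1$ in Step three. In the pure Dirichlet case on a Lipschitz domain this is textbook (Dahlberg--Jerison--Kenig), but here one must handle the corner set $\pa D\cap\pa N$ where both boundary conditions meet. The transversality \eqref{eqn:harmonic1} is precisely what lets one build the necessary barriers near this corner and run a boundary-Harnack-type comparison for the mixed problem; the doubling of $\omega_1^z$ at scales $r<r_1$ follows from the same set of barrier constructions. Once these geometric comparison estimates are in hand, the rest of the argument is a straightforward combination of Rellich, Caccioppoli, and the Green's function/harmonic-measure identity.
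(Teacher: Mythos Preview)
Your approach is essentially the same as the paper's: localized Rellich identity with the transversality direction $\tau_1$ (the paper cites Brown for this), a Caccioppoli inequality to pass from $|\nabla G_1|^2$ to $G_1^2$, a pointwise bound on $G_1$ in the tent, the comparison $G_1(q)\lesssim r^{-(n-1)}\omega_1^z(S_r(p))$, and doubling of harmonic measure.

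The one place worth commenting on is the step you flag as the ``main obstacle,'' the Carleson/boundary Harnack estimate near the corner $\pa D\cap\pa N$. You propose building barriers using the transversality; the paper instead avoids new barrier constructions entirely by reflecting $G_1$ evenly across the Lipschitz Neumann piece $N$. The reflected function $\tilde G_1$ is then a nonnegative solution of a divergence-form elliptic equation with bounded measurable coefficients on the doubled domain, vanishing on the (reflected) Dirichlet boundary, so the classical CFMS boundary Harnack and doubling estimates apply directly. This reduces your ``obstacle'' to quoting existing theory rather than reproving it in the mixed setting, and is the cleaner route.
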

\begin{proof}{Proposition \ref{prop:harmonic}}
Let $\chi({x})\geq0$ be a smooth cut-off function, which equals $1$ in $B_{r}(p)$ and $0$ outside $B_{2r}(p)$. For $v_1 = \chi G_1$, and $\tau_1$ as in \eqref{eqn:harmonic1}, we follow a calculation used by Brown (see Lemma 1.7 in \cite{Br}) in order to gain estimates on $\nabla G_1$ exploiting \eqref{eqn:harmonic1}: We first have the Rellich identity
\begin{align*}
\int_{\pa U_1} \left|\nabla v_1\right|^2 \tau_1\cdot\nu - 2 \pa_{\nu}v_1 \tau_1\cdot\nabla v_1 = -2\int_{U_1} \left(\Delta v_1\right) \tau_1\cdot\nabla v_1. 
\end{align*}
Using \eqref{eqn:harmonic1}, together with $v_1\equiv 0$ on $D$,  this identity implies the  bound
\begin{align} \label{eqn:L2harmonic2}
\int_{D} \left(\pa_\nu v_1\right)^2 + \int_{N}\left|\nabla_{\text{tang}}v_1\right|^2 \leq C_1\int_{U_1} \Delta v_1 \tau_1\cdot\nabla v_1 + C_1\int_{N}\left|\pa_{\nu}v_1\right|^2,
\end{align}
where $\nabla_{\text{tang}}$ is the tangential gradient on $N$, and $C_1$ depends only on $c^*$. We can rewrite the right hand side of \eqref{eqn:L2harmonic2} as
\begin{align} \label{eqn:L2harmonic3}
C_1\int_{U_1} \left(G_1\Delta \chi + 2\nabla G_1\cdot\nabla \chi\right)\tau_1\cdot\nabla\left(\chi G_1\right) + C_1\int_{N}G_1^2\left|\pa_{\nu}\chi \right|^2,
\end{align}
and we have $|\nabla \chi({x})| \leq Cr^{-1}$, $|\Delta \chi({x})| \leq Cr^{-2}$, for a universal constant $C$. Let $\tilde{\chi}$ be another smooth cutoff function, equal to $1$ on the support of $\chi$ and vanishing outside $B_{4r}(p)$. We have
\begin{align*}
\int_{U_1}\tilde{\chi}^2\left|\nabla G_1\right|^2 = -\int_{U_1} \tilde{\chi}\nabla \tilde{\chi}
 \cdot \nabla \left(G_1^2\right),
\end{align*}
since $G_1\pa_{\nu}G_1=0$ on $\pa U_1$, and $G_1$ is harmonic on the support of $\tilde{\chi}$. Integrating by parts again gives
\begin{align} \label{eqn:L2harmonic4}
\int_{U_1}\tilde{\chi}^2\left|\nabla G_1\right|^2 = \int_{U_1} G_1^2\text{div}\left(\tilde{\chi}\nabla \tilde{\chi}\right) + \int_{\pa U_1} G_1^2 \tilde{\chi}\nu\cdot\nabla \tilde{\chi}. 
\end{align}
Since $G_1$ satisfies Neumann boundary conditions on $N$, we can flatten this part of the boundary and reflect across it. Denoting $\tilde{G}_1$ to be the reflection of $G_1$ across $N$, it satisfies an elliptic equation in divergence form, with bounded measurable coefficients, depending only on the Lipschitz constant of $N$. Moreover, $\tilde{G}_1\geq0$ vanishes on the boundary of this doubled domain. Therefore, the boundary Harnack inequality, \cite{CFMS}, applies, giving
\begin{align} \label{eqn:L2harmonic5}
G_1({x}) \leq G_1(q), \text{ for } {x} \in \text{supp}(\tilde{\chi}),
\end{align}
where $q$ is a point in the support of $\tilde{\chi}$ at a distance of $r$ from $\pa U_1$. Using the Litmann, Stampacchia, Weinberger bounds, \cite{LSW}, we can bound $G_1(q)$ by a constant depending only on the Lipschitz constant of $U$. Inserting the estimates from \eqref{eqn:L2harmonic4} and \eqref{eqn:L2harmonic5} into \eqref{eqn:L2harmonic3}  thus gives
\begin{align*}
\int_{D} \left(\pa_\nu v_1\right)^2 + \int_{N}\left|\nabla_{\text{tang}}v_1\right|^2 \leq C_2G_1(q)^2,
\end{align*}
for a constant $C_2$ depending only on $c^*$ and the Lipschitz constant of $U$. Doubling estimates for the elliptic measure, for operators in divergence form with bounded measurable coefficients, \cite{CFMS}, ensure that
\begin{align*}
G_1(q) \leq C\frac{1}{r^{n-1}} \int_{S_{r}(p)}K_1 \ud \sigma,
\end{align*}
and hence the reverse H\"older inequality for $K_1$ in \eqref{eqn:harmonic2} holds. 
\end{proof}
We will use the estimate on $K_1$ in Proposition \ref{prop:harmonic} to prove the following:
\begin{prop} \label{prop:harmonic2}
Let $U$, $U_1$ be as in \emph{Proposition \ref{prop:harmonic}}, and suppose that there exists $r_0>0$ such that for each $p\in D =  \pa B_1(\etb)\cap U$, the inequalities in \eqref{eqn:harmonic1} hold in $U_1\cap B_{2r_0}(p)$, for a constant $c^*>0$. Let $w$ satisfy, in the appropriate non-tangential sense,
\begin{align*}
\Delta w = 0 \text{ in } U_1, \qquad w = f \text{ on } D, 
\qquad \pa_{\nu}w = 0 \text{ on } N = \pa U \cap B_1(\etb)
\end{align*}
for some $f\in L^2(D, d\sigma)$. Then,
\begin{align*}
\sup_{B_{1/2}(\emph{\textbf{0}})\cap U}  w^2 
\leq 
C\int_{D} f^2  \ud \sigma, 
\qquad \int_{N} w^2 \ud \sigma
\leq C\int_{D} f^2\ud \sigma,
\end{align*}
for a constant $C$ depending only on $r_0$, $c^*$, and the Lipschitz constant of $U$. 
\end{prop}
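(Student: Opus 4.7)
The plan is to combine a Poisson-kernel representation for the pointwise bound with a duality argument that reduces the $L^2(N)$ estimate to a Rellich identity for a dual mixed problem.

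For the pointwise bound, uniqueness for the mixed Dirichlet-Neumann problem gives the Poisson representation
\[
w(z)=\int_D f(x)\,K_1^z(x)\,d\sigma(x),\qquad z\in U_1,
\]
and since the constant $1$ solves the same mixed problem with data $1$ on $D$ and $0$ on $N$, the harmonic measure is a probability measure: $\int_D K_1^z\,d\sigma=1$. Cauchy-Schwarz gives $w(z)^2\le \|f\|_{L^2(D)}^2\,\|K_1^z\|_{L^2(D)}^2$, so the pointwise estimate reduces to bounding $\|K_1^z\|_{L^2(D)}$ uniformly for $z\in B_{1/2}(\mathbf{0})\cap U$. Cover $D$ by a finite family of surface balls $\{S_{r_1}(p_i)\}$ of bounded overlap, apply the reverse H\"older inequality of Proposition \ref{prop:harmonic} on each ball, and sum:
\[
\int_D (K_1^z)^2\,d\sigma\le C\sum_i r_1^{-(n-1)}\Bigl(\int_{S_{r_1}(p_i)}K_1^z\,d\sigma\Bigr)^{\!2}\le C\Bigl(\int_D K_1^z\,d\sigma\Bigr)^{\!2}=C.
\]

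For the $L^2(N)$ bound I would argue by duality. Given $g\in L^2(N)$, let $v$ solve the dual mixed problem $\Delta v=0$ in $U_1$, $v=0$ on $D$, $\partial_\nu v=g$ on $N$. Green's identity applied to the harmonic pair $(w,v)$ gives $\int_N w\,g\,d\sigma=-\int_D f\,\partial_\nu v\,d\sigma$, so after Cauchy-Schwarz and taking the supremum over $g$ with $\|g\|_{L^2(N)}=1$ it suffices to establish the dual bound $\|\partial_\nu v\|_{L^2(D)}\le C\|g\|_{L^2(N)}$. This I would obtain from a Rellich identity applied to $v$ with the transversality vector field $\tau_1$ of \eqref{eqn:harmonic1}: starting from $\int_{\partial U_1}(|\nabla v|^2\,\tau_1\cdot\nu-2(\partial_\nu v)(\tau_1\cdot\nabla v))\,d\sigma=0$, using $v=0$ on $D$ (so $\nabla v=(\partial_\nu v)\nu$) and $\partial_\nu v=g$ on $N$ (so $\nabla v=g\nu+\nabla_{\text{tang}}v$), the identity rearranges into
\begin{align*}
\int_D (\partial_\nu v)^2(-\tau_1\cdot\nu)\,d\sigma+\int_N|\nabla_{\text{tang}}v|^2(\tau_1\cdot\nu)\,d\sigma=\int_N g^2(\tau_1\cdot\nu)\,d\sigma+2\int_N g\,(\tau_1\cdot\nabla_{\text{tang}}v)\,d\sigma.
\end{align*}
The transversality conditions \eqref{eqn:harmonic1} bound the left side from below by $c^*\bigl(\int_D(\partial_\nu v)^2+\int_N|\nabla_{\text{tang}}v|^2\bigr)$, and a Cauchy-Schwarz estimate $2|g||\nabla_{\text{tang}}v|\le \varepsilon|\nabla_{\text{tang}}v|^2+C_\varepsilon g^2$ absorbs the cross term, yielding $\int_D(\partial_\nu v)^2\le C\int_N g^2$.

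The main technical obstacle is the same one faced in the proof of Proposition \ref{prop:harmonic}: the transversality \eqref{eqn:harmonic1} only provides a $\tau_1$ that is locally constant in balls $B_{2r}(p)$ near corner points $p\in\partial B_1\cap\partial U$, so the Rellich identity cannot be applied globally with a single vector field. Instead, as in the construction of $v_1=\chi G_1$ in Proposition \ref{prop:harmonic}, one localizes with cutoff functions, sums over a partition of unity covering a neighbourhood of the corner, and controls the resulting commutator terms using interior gradient estimates for $v$ together with the pointwise bound already proved. A separate, routine issue is that the dual problem with merely $L^2$ boundary data requires a density argument from smooth $g$.
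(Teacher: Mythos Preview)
Your argument for the interior supremum bound is essentially the paper's: Poisson representation, Cauchy--Schwarz, a finite cover of $D$ by surface balls of radius $r_0$, and the reverse H\"older inequality of Proposition~\ref{prop:harmonic} to bound $\|K_1^z\|_{L^2(D)}$ by $\int_D K_1^z\,d\sigma=1$.

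For the $L^2(N)$ estimate your route is genuinely different. The paper does not dualize; instead it observes that each $q\in N$ near the corner lies in the nontangential approach region of a unique $p\in D$ ``above'' it (via the transversality direction $\tau_1$), so $|w(q)|\le N^*(w)(p)$. It then invokes the Hunt--Wheeden estimate $N^*(w)\le C\,M^*(f)$, where $M^*$ is the Hardy--Littlewood maximal operator with respect to harmonic measure $d\omega^*$ at a fixed pole, and finishes by noting that the reverse H\"older inequality of Proposition~\ref{prop:harmonic} is exactly the statement that $d\sigma\in A_2(d\omega^*)$, whence $\|M^*(f)\|_{L^2(D,d\sigma)}\le C\|f\|_{L^2(D,d\sigma)}$. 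So the paper leverages Proposition~\ref{prop:harmonic} a second time through the $A_2$ machinery, whereas you essentially re-run the Rellich computation of Proposition~\ref{prop:harmonic} for the dual function $v$. Your approach avoids the nontangential maximal function and Muckenhoupt weight theory, at the cost of having to set up and solve the dual Neumann-to-Dirichlet problem and redo the localized Rellich argument. Both are standard and correct.

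One small correction: when you write that the commutator terms from localization are controlled ``using interior gradient estimates for $v$ together with the pointwise bound already proved,'' note that the pointwise bound was for $w$, not for the dual function $v$. What you actually need for the commutators is the energy estimate $\|\nabla v\|_{L^2(U_1)}\le C\|g\|_{L^2(N)}$, which follows by testing the equation for $v$ against itself, using $v=0$ on $D$, and combining the trace inequality on $N$ with Poincar\'e. This also handles the boundary commutator $\int_N v^2(\partial_\nu\chi)^2\,d\sigma$ via the trace bound $\|v\|_{L^2(N)}\le C\|v\|_{H^1(U_1)}$.
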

\begin{proof}{Proposition \ref{prop:harmonic2}}  To prove the interior supremum
estimate, fix $z\in B_{1/2}(\textbf{0})\cap U$ and set
$\ud \omega_1 = K_1 \ud \sigma = \pa_\nu G_1^z \ud \sigma$.
Then
\[
w(z) = \int_{D}  f \ud \omega_1 \leq 
\left(\int_{D} f^2\ud \sigma\right)^{1/2} 
\left(\int_{D} K_1^2 \ud \sigma\right)^{1/2} \, .
\]
Covering $D$ with a finite number of balls of fixed radius $r_0$, it
follows from Proposition \ref{prop:harmonic}
that 
\[
\left(\int_{D} K_1^2 \ud \sigma\right)^{1/2}  \leq C\int_D K_1 \ud \sigma = C
\]
with a constant $C$ depending on $r_0$, $c^*$ and the Lipschitz constant
of $U$. 

The second claim, the $L^2(N)$ bound all the way up to the 
places where $N$ meets $D$, uses nontangential maximal function bounds.
Since we have already taken care of interior points, it suffices to consider 
points $q \in N$ such that dist$(q, \pa B_1(\textbf{0})) \le r_0$.
For such points, the transversality assumption \eqref{eqn:harmonic1}
implies that there is a unique point $p\in D$ ``above" $q$ in
the sense that 
\[
q = p + s \tau_1 \quad \mbox{for some } \ s>0.
\]
Since $q$ belongs to the nontangential cone below $p$, 
$w(q) \le N^*(w)(p)$ where $N^*$ is defined as the nontangential maximal function.
Next, recall that the estimates of Hunt and Wheeden (\cite{CFMS}, \cite{HW})
imply that the nontangential maximal function is dominated by the 
Hardy-Littlewood maximal function associated with harmonic measure
$d \omega^* = \pa_\nu G_1^{z^*}$ at a single, fixed central point $z^*$
inside $U_1$:
\begin{align*} 
w(q) \le N^*(w)(p) \leq C_1M^*(f)(p),
\end{align*}
for a constant $C_1$ depending only on the Lipschitz constant of $U$, 
with 
\begin{align} \label{eqn:L2estimate1}
M^*(f)(p) = \sup_{p\in \Delta}\frac{1}{\omega^*(\Delta)} \int_{\Delta}f \ud \omega^*,
\end{align}
for $\Delta$ a surface ball on $D$.  In particular,
\[
\|w\|_{L^2(N,\ud \sigma)} \le C \| M^*(f)\|_{L^2(D, \ud \sigma)}.
\]
Finally, the bound in \eqref{eqn:harmonic2} ensures that $d \sigma$ is in the Muckenhoupt  weight class $A_{2}(d \omega^*)$.  Thus
\begin{align} \label{eqn:L2estimate2}
\norm{M^*(f)}_{L^2(D,\ud \sigma) } 
\leq C_2\norm{f}_{L^2(D, \ud \sigma)}
\end{align}
for a constant $C_2$ depending only on the constant $C$ from Proposition \ref{prop:harmonic}.  Combining \eqref{eqn:L2estimate2} with \eqref{eqn:L2estimate1}
we obtain the second estimate, concluding the proof of the proposition.
\end{proof}

Let the functions $v^{\pm}$ be given by
\begin{align*}
\Delta v^{\pm} = 0 \text{ in }B_t(\textbf{0})\cap \Omega, \qquad v^{\pm} = u^{\pm} \text{ on }\pa B_t(\textbf{0}) \cap\Omega, \qquad \pa_{\nu}v^{\pm} = 0 \text{ weakly on }  B_t(\textbf{0})\cap \pa\Omega. 
\end{align*}
Since $u^{\pm}$ is harmonic in $\{u^{\pm}>0\}$, and $\pa_{\nu}u^{\pm}=0$ weakly on $\{u^{\pm}>0\}\cap B_t(\textbf{0})\cap\pa\Omega$, the functions $v^{\pm}$ maximize $u^{\pm}$ in $B_t(\textbf{0})\cap\Omega$. 
\\
\\
We will apply Propositions \ref{prop:harmonic} and \ref{prop:harmonic2} to the sets $V_t,$ and the functions $v^{\pm}_t$, (which are the rescalings of $v^{\pm}$ by $t$ around $\textbf{0}$). Note that the assumption on the angles in \eqref{eqn:harmonic1} from Proposition \ref{prop:harmonic} can only fail when $\pa B_t(\textbf{0})$ is close to tangential to $\pa\Omega$. We  show that the assumption in \eqref{eqn:harmonic1} holds when $t^{-1}M_{\textbf{0}}(t)$ is sufficiently small.

\begin{lem} \label{lem:transverse}
Define the set $U_t = (t^{-1}\Omega)\cap B_1)$. This has boundary in two parts, given by $V_t$, with outward pointing unit normal $x$, and $\pa\Omega_t = t^{-1}(\pa\Omega \cap B_t)$, with outward pointing unit normal $\nu(x)$. There exist constants $c$ and $r>0$ with the following property: For $t\in S_{c}$, and each $p\in\pa V_t$, there exists a unit direction $\tau_1$ such that
\begin{align*}
\tau_1\cdot x \leq -\tfrac{1}{10} \text{ for } x\in V_t\cap B_{r}(p), \qquad \tau_1\cdot \nu(x) \geq \tfrac{1}{10} \text{ for } x\in \pa\Omega_t\cap B_{r}(p).
\end{align*}
\end{lem}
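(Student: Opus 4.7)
The plan is to leverage the graph representation of $\pa\Omega$ near $\etb$ from Lemma \ref{lem:graph} and to choose $\tau_1$ as a combination of the inward radial direction $-p$ (which handles the sphere boundary $V_t$) and the graph-opposing direction $-e_n$ (which handles $\pa\Omega_t$, since every outward normal to the graph $x_n=g(x')$ has a uniformly positive inner product with $-e_n$). Concretely, for $t\in S_c$ with $c$ small enough, Lemma \ref{lem:graph} writes $\pa\Omega\cap(B_{2t}\setminus B_{t/2})=\{x_n=g(x')\}$ with $g$ convex, $g(0)=0$, and $|\nabla g|\le L$ for some $L$ depending only on $\pa\Omega$. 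Setting $g_t(x')=t^{-1}g(tx')$, a point $p\in\pa V_t$ takes the form $p=(p',g_t(p'))$ with $|p|=1$, and the Lipschitz bound forces $|p_n|\le L/\sqrt{1+L^2}<1$, so $1+p_n$ is bounded below by a positive constant depending only on $L$. I would then set
\[
\tau_1 \;=\; \frac{-p-e_n}{\sqrt{2(1+p_n)}}.
\]

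The sphere condition is elementary. For $x\in V_t\cap B_r(p)$, $|x|=|p|=1$ yields $1-p\cdot x=|x-p|^2/2\le r^2/2$ and $x_n\ge p_n-r$, so $-p\cdot x-x_n\le -(1+p_n)+r+r^2/2\le -(1+p_n)/2$ for $r$ sufficiently small depending on $L$; dividing by the norm gives $\tau_1\cdot x\le -\sqrt{(1+p_n)/8}$, a negative constant depending only on $L$. The $\pa\Omega_t$ condition is the main one. For $x\in\pa\Omega_t\cap B_r(p)$, each outward normal at $y=tx$ has the form $\nu=(\xi,-1)/\sqrt{1+|\xi|^2}$ with $\xi\in\partial g_t(x')$ a subgradient (so $|\xi|\le L$). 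The graph-opposing component is trivial: $-e_n\cdot\nu=1/\sqrt{1+|\xi|^2}\ge 1/\sqrt{1+L^2}$. The nontrivial piece is $-p\cdot\nu=(p_n-p'\cdot\xi)/\sqrt{1+|\xi|^2}$, for which I would use the Dini bound: $t\in S_c$ and $r<1/2$ give $|y|\in[t/2,2t]\subset[t/4,4t]$, so $\nu\cdot y\le M_{\etb}(|y|)\le c|y|$, which after rescaling becomes $\xi\cdot x'-g_t(x')\le 2c\sqrt{1+L^2}$. Combining with convexity (from $g_t(0)=0$ one has $\xi\cdot x'\ge g_t(x')$) and Lipschitz transfer from $x'$ to $p'$, I would decompose
\[
p'\cdot\xi - p_n \;=\; (p'-x')\cdot\xi + \bigl(\xi\cdot x' - g_t(x')\bigr) + \bigl(g_t(x') - g_t(p')\bigr) \;\le\; 2Lr + 2c\sqrt{1+L^2}.
\]
These estimates together yield $\tau_1\cdot\nu\ge (1/\sqrt{1+L^2}-2Lr-2c\sqrt{1+L^2})/\sqrt{2(1+p_n)}$, a positive constant once $c,r$ are small.

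The main obstacle is that the subgradient $\xi\in\partial g_t(x')$ can differ from any chosen subgradient at $p'$ by as much as $2L$, so the normals $\nu(x)$ and $\nu_p$ need not be close to each other. The resolution is that we do not require $\nu(x)\approx\nu_p$, only that $\tau_1\cdot\nu(x)$ stays bounded below. The Dini condition controls the single directional quantity $\xi\cdot x'-g_t(x')$, and testing against the specific vector $p'$ (rather than against some reference subgradient $\xi_p$) is precisely what this one-dimensional control affords us. Choosing $c$ and $r$ small enough, depending only on the Lipschitz constant of $\pa\Omega$, we obtain both transversality bounds with a uniform positive constant, which can be brought to $1/10$ (or any other prescribed positive value) by further tightening $c$ and $r$ within the allowed dependence.
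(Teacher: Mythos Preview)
Your argument is correct and takes a somewhat different route from the paper's. The paper rotates so that $p$ lies in the $(x_1,x_n)$-plane, then chooses $\tau_1$ abstractly in that plane so that $\tau_1\cdot p\le -\tfrac15$ and $\tau_1\cdot\nu(p)\ge\tfrac15$ (using that $p$ and $\nu(p)$ are nearly orthogonal when $c$ is small), and finishes by asserting that $|\tau_1\cdot(\nu(p)-\nu(x))|$ is small for $|x-p|$ small. You instead write down the explicit vector $\tau_1=(-p-e_n)/\sqrt{2(1+p_n)}$ and verify both inequalities by direct computation; your key observation is that the $-e_n$ component alone already gives a uniform positive contribution $-e_n\cdot\nu(x)\ge 1/\sqrt{1+L^2}$ for \emph{every} supporting normal to the graph, so that one only needs to bound the possibly negative $-p\cdot\nu(x)$ term, and this you do via the Dini bound applied at $x$ itself rather than by comparing $\nu(x)$ to $\nu(p)$. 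This is arguably more robust, since it sidesteps entirely the question of how much the (possibly multivalued) normal can vary between $p$ and $x$, which your ``main obstacle'' paragraph correctly identifies and which the paper's continuity claim leaves somewhat implicit.

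One minor point: your assertion ``$g(0)=0$'' is not part of Lemma~\ref{lem:graph} as stated (that lemma only parametrizes $\pa\Omega$ over the annulus $B_{2t}\setminus B_{t/2}$, and the base point $\etb$ need not lie on $\pa\Omega$). You do not actually need $g_t(0)=0$ for your main decomposition, but you do use it to get $|p_n|\le L/\sqrt{1+L^2}$. The bound $1+p_n\ge\delta(L)>0$ can instead be read off directly from the two facts you already have: $\nu_n(tp)\le -c^*$ (from Lemma~\ref{lem:graph}) and $\nu(tp)\cdot p\le c$; if $p_n$ were near $-1$ then $\nu(tp)\cdot p\approx -\nu_n(tp)\ge c^*$, contradicting $c<c^*$. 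With that adjustment your proof goes through as written.
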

In particular, this ensures that the condition in \eqref{eqn:harmonic1} holds for this range of $t$, and so we can apply Proposition \ref{prop:harmonic2} to the sets $U = t^{-1}\Omega$ and $U_1 = U_t$. 
\begin{proof}{Lemma \ref{lem:transverse}}
After a rotation in the $x'$-variables, and by taking $c$ sufficiently small so that Lemma \ref{lem:graph} applies, we can write the point $p = (p',p_n)$ as $(p_1,0,\ldots,0,g_t(p'))$, with $p_1>0$. Moreover, by taking $c >0$ sufficiently small, we can ensure that $\nu(p)\cdot p \leq \tfrac{1}{10}$. In particular, we can choose a unit direction $\tau_1$ in the $(x_1,x_n)$-plane such that
\begin{align*}
\tau_1 \cdot p \leq - \tfrac{1}{5}, \qquad \tau_1 \cdot \nu(p)\geq \tfrac{1}{5}.
\end{align*}
For $x\in V_t$, with $|x-p|\leq \tfrac{1}{10}$, we therefore have $\tau_1\cdot x \leq -\tfrac{1}{10}$. For $x\in \pa\Omega_t$, with $|x-p|\leq \tfrac{1}{10}$, we also have that $\nu(x)\cdot x \leq \tfrac{1}{10}$, and so  we can ensure that $\left|\tau_1 \cdot (\nu(p)-\nu(x))\right| \leq \tfrac{1}{10}$ for $|x-p|$ sufficiently small. Therefore, the inequalities in the statement of the proposition hold for $|x-p|<r$ with $r$ sufficiently small.
\end{proof}

We can now prove Propositions \ref{prop:Main1} and \ref{prop:Main3} by estimating the functions $A^{\pm}(t)$ from Definition \ref{defn:A-plus}. The proof of Proposition \ref{prop:Main1} will follow directly from the estimates in Proposition \ref{prop:harmonic2}. In order to establish the more refined estimates in Proposition \ref{prop:Main3}, we need to exploit control on the oscillation of the harmonic functions $u^{\pm}$ in regions satisfying an exterior cone condition. 
\begin{proof}{Proposition \ref{prop:Main1}}
We fix the constant $c$ in the statement of the proposition to be as in Lemma \ref{lem:transverse}. Then, for $t\in S_c$, we can apply  Proposition \ref{prop:harmonic2} to the function $v_t^+ = t^{-1}v^+(tx)$. Since $v^+$ maximizes $u^+$ this implies that 
\begin{equation} \label{eqn:harmonic-3}
\begin{split}
\int_{B_t(\textbf{0})\cap\pa\Omega}(u^{+})^2\ud \sigma & \leq C \int_{\pa B_t(\textbf{0})\cap\Omega} (u^{+})^2 \ud \sigma, \\  (u^{+}({x}))^2 & \leq Ct^{-2}\int_{\pa B_t(\textbf{0})\cap\Omega} (u^{+})^2 \ud \sigma \quad\text{ for } x \in B_{t/2}(\textbf{0})\cap\Omega.
\end{split}
\end{equation}
Here and throughout $C$ is a constant depending only on the Lipschitz norm of $\pa\Omega$, which may change from line-to-line. By the definition of $M_{\textbf{0}}(t)$, we also have the estimates
\begin{equation} \label{eqn:harmonic-4}
\begin{split}
 \frac{\nu(x)\cdot x}{|x|^3}  & \leq 8t^{-3}{M}_{{\textbf{0}}}(t) \quad\text{ for } x\in \left(B_t(\textbf{0})\cap\pa\Omega \right)\backslash B_{t/2}(\textbf{0}) ,\\  
 \int_{B_{t/2}(\textbf{0})\cap\pa \Omega} \frac{\nu(x)\cdot x}{|x|^3} \ud \sigma  & \leq C\int_{0}^{t}s^{-2}{M}_{{\textbf{0}}}(s) \ud s.
\end{split}
\end{equation}
 Therefore, breaking the integral in the numerator of $A^{+}(t)$ into two pieces and using the estimates in \eqref{eqn:harmonic-3} and \eqref{eqn:harmonic-4}, we have
\begin{align} \label{eqn:harmonic-5}
A^{+}(t) &\leq C\left(t^{-1}M_{{\textbf{0}}}(t) +\int_{0}^{t}s^{-2}M_{{\textbf{0}}}(s) \ud s \right).
\end{align}
By definition $t^{-1}M_{{\textbf{0}}}(t)$ is certainly bounded by $1$, and by Assumption \ref{assum:Main}, the integral of $s^{-2}M_{\textbf{0}}(s)$ is bounded by a constant depending only on $C_*$. The estimate on $A^{-}(t)$ follows analogously, and so this proves Proposition \ref{prop:Main1}. 
\end{proof}
\begin{proof}{Proposition \ref{prop:Main3}}
To prove Proposition \ref{prop:Main3}, we need to control the integral of $A^{\pm}(t)/t$ for $t\in S_c$. Looking at the estimate on $A^{+}(t)$ in \eqref{eqn:harmonic-5} and using Assumption \ref{assum:Main}, we see that the reason this estimate is not sufficient is due to the term $\int_{0}^{t}s^{-2}M_{\textbf{0}}(s) \ud s$ appearing on the right hand side. Therefore, we need to sharpen the estimate on $u^{\pm}({x})$ for ${x}\in B_{t/2}(\textbf{0})$. We do this by obtaining a better estimate on $\left(u^{+}({x})\right)^2$ than in \eqref{eqn:harmonic-3}, and likewise for $\left(u^{-}(x)\right)^2$.
\begin{lem} \label{lem:harmonic-improved}
Let $s\in(0,\tfrac{1}{2}t)$ be given.  Recalling the decomposition of $S_c = \bigcup_{j\in J}I_j$, $I_j = [t_j,t_j')$, from \emph{Definition \ref{defn:dyadic}}, let $K(s)$ be the number of $j\in J_1 = J_1(c)$ for which $t_j\in(s,\tfrac{1}{2}t)$. Then, there exist constants $0<c_0<1$, $C>0$ $($depending only on the Lipschitz constant of $\pa\Omega$$)$ such that
\begin{align*}
\left(u^{\pm}({x})\right)^2 \leq Cc_0^{K(s)}t^{-2}\int_{\pa B_t(\etb)\cap\Omega} (u^{\pm})^2 \ud \sigma 
\end{align*}
for all ${x}\in B_{s}(\etb)\cap\Omega$. 
\end{lem}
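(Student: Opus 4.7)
The plan is to iterate a single-scale decay estimate across the dyadic intervals $I_j$ with $j\in J_1$ and $t_j\in(s,t/2)$. The base case, at scale $t$, is the bound
\[
\sup_{B_{t/2}(\etb)\cap\Omega}(u^+)^2 \le Ct^{-2}\int_{\partial B_t(\etb)\cap\Omega}(u^+)^2 \, d\sigma
\]
already recorded in the second line of \eqref{eqn:harmonic-3}, and each such $j$ should contribute a multiplicative factor $c_0\in(0,1)$, yielding the claimed $c_0^{K(s)}$ after the $K(s)$ iterations. The argument for $u^-$ is symmetric.

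The core step is the single-scale decay
\[
\sup_{B_{t_j}(\etb)\cap\Omega}(u^+)^2 \le c_0 \sup_{B_{t_j'}(\etb)\cap\Omega}(u^+)^2, \qquad j\in J_1.
\]
I would first use the defining property $\Phi(t_j)\ge\Phi(t_j')$ together with \eqref{eqn:Phi-deriv6a} to extract a radius $t^*\in I_j$ at which both $|U_{t^*}^{+}|$ and $|U_{t^*}^{-}|$ exceed a universal constant $c_1>0$. Indeed, integrating \eqref{eqn:Phi-deriv6a} and using $\Phi(t_j)\ge\Phi(t_j')$ gives
\[
0 \ge \int_{I_j}\tfrac{\Phi'(r)}{\Phi(r)}\,dr \ge 2\int_{I_j}\tfrac{\beta^+(r)+\beta^-(r)-2}{r}\,dr;
\]
if one slice, say $U_r^-$, had spherical measure below $c_1$ throughout a fixed-fraction subinterval of $I_j$, then the spherical Faber--Krahn inequality would force $\lambda^-(r)$ and hence $\alpha^-(r)$ to be very large, and \eqref{eqn:Phi-deriv7} combined with Proposition \ref{prop:Main1} (applicable since $t_j\in S_c$) would force $\beta^-(r)$ large as well, contradicting the integral bound. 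Given such a $t^*$, I would then apply Proposition \ref{prop:harmonic2} to the harmonic majorant $v^+$ of $u^+$ on $B_{t^*}(\etb)\cap\Omega$; the boundary datum $v^+=u^+$ on $\partial B_{t^*}\cap\Omega$ vanishes on the set $t^*U_{t^*}^{-}$ of relative surface measure at least $c_1$. The reverse H\"older bound of Proposition \ref{prop:harmonic} gives a lower bound $\varepsilon(c_1)>0$ on the harmonic measure of this vanishing set as seen from any point of $B_{t^*/2}(\etb)\cap\Omega$, hence
\[
\sup_{B_{t^*/2}(\etb)\cap\Omega} v^+ \le (1-\varepsilon)\sup_{\partial B_{t^*}(\etb)\cap\Omega} u^+.
\]
Combined with $u^+\le v^+$ and the inclusion $B_{t_j}\subset B_{t^*/2}$ (which holds once $t^*\ge 2t_j$), this yields the single-scale decay with $c_0 = 1-\varepsilon$, depending only on $c_1$ and the Lipschitz constant.

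The main obstacle will be the scale matching in extracting $t^*$: a priori the good radius could sit arbitrarily close to the left endpoint $t_j$ of $I_j$, leaving no room for $B_{t_j}\subset B_{t^*/2}$. I plan to address this by restricting the search for $t^*$ to the interior subinterval $[3t_j/2,2t_j)$ from the outset, so that the Faber--Krahn/$\Phi$-non-increase contradiction is run only there; a small enough threshold $c_1$ will still produce a contradiction, since the contribution of the complementary subinterval to the integral above is controlled from below using Proposition \ref{prop:Main-half}. Any residual constant loss can be absorbed by iterating across two adjacent intervals at once at the cost of a slightly smaller $c_0$, which is harmless for the stated conclusion.
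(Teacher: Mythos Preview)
Your overall strategy---iterate a single-scale decay across the $I_j$ with $j\in J_1$, using $\Phi(t_j)\ge\Phi(t_j')$ together with \eqref{eqn:Phi-deriv6a} to force both phases to have definite presence at scale $\sim t_j'$---matches the paper's exactly. The difference is in the decay mechanism. The paper reflects $u^\pm$ across the convex Neumann boundary $\pa\Omega$, obtaining a nonnegative solution $\tilde u^{+}$ of a divergence-form elliptic equation on a doubled domain, vanishing on the (reflected) free boundary; the $\Phi$-contradiction argument yields that the $n$-dimensional set $\tilde\Omega_{t_j'}^{-}$ has measure at least $\eps_0 t_j^{n}$, and then one step of the De Giorgi--Nash boundary oscillation estimate (Theorem~8.27 in \cite{GT}) gives $\max_{\tilde\Omega_{t_j}^{+}}\tilde u^{+}\le c_0\max_{\tilde\Omega_{t_j'}^{+}}\tilde u^{+}$ directly, with no need to single out a good radius $t^*$.

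Your route via the harmonic majorant and the $A_\infty$-property of the mixed harmonic measure (Propositions~\ref{prop:harmonic}--\ref{prop:harmonic2}) is a legitimate alternative and has the virtue of staying within the machinery already built in this section. But your handling of the scale matching is incomplete: the subinterval $[3t_j/2,2t_j)$ need not be contained in $I_j$ at all, since $t_j'$ may lie arbitrarily close to $t_j$; and ``iterating across two adjacent intervals'' does not obviously help, because adjacent dyadic indices need not lie in $J_1$ (or even in $J$). The paper's use of the solid $n$-dimensional measure in place of a single spherical slice is precisely what absorbs this difficulty and makes the iteration from $t_j'$ down to $t_j$ clean.
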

Before proving Lemma \ref{lem:harmonic-improved}, let us first use it to complete the proof of Proposition \ref{prop:Main3}: Letting $j_1$, $j_2, \ldots$ be the increasing sequence of indices for which $j_k\in J_1$, we need to bound
\begin{align*}
\sum_{j\in J_1}\int_{I_j}\frac{A^{+}(t)}{t} \ud t =  \sum_{k}\int_{t_{j_k}}^{t_{j_k}'}\frac{A^{+}(t)}{t} \ud t .
\end{align*}
Using the estimate on $\left(u^{+}({x})\right)^2$ from Lemma \ref{lem:harmonic-improved} in place of the second estimate in \eqref{eqn:harmonic-3}, we obtain
\begin{align*}
A^{+}(t) \leq C\left(t^{-1}M_{\textbf{0}}(t) + \int_{0}^{t}s^{-2}c_0^{K(s)}M_{\textbf{0}}(s)\ud s \right). 
\end{align*}
We therefore have
\begin{align*}
  \sum_{j\in J_1}\int_{I_j}\frac{A^{+}(t)}{t} \ud t 
 \leq  C \sum_{j\in J_1}\int_{I_j}\left(t^{-2}M_{\textbf{0}}(t) + t^{-1}\int_{0}^{t}s^{-2}c_0^{K(s)}M_{\textbf{0}}(s) \ud s \right)\ud t.
\end{align*}
By Assumption \ref{assum:Main}, we are left to show that
\begin{align*}
 \sum_{j\in J_1}\int_{I_j}t^{-1}\left(\int_{0}^{t}s^{-2}c_0^{K(s)}M_{\textbf{0}}(s)\ud s \right)\ud t
\end{align*}
is bounded. Recalling that we have written the intervals $I_j$ with $j\in J_1$ by $[t_{j_k},t'_{j_k}]$, for $t\in[t_{j_k},t_{j_k}']$ and $s\in[t_{j_l},t_{j_{l-1}}]$ we have $K(s) = k-l$, and so we can write the above sum as
\begin{align*}
\sum_{k}\int_{t_{j_k}}^{t_{j_k}'}t^{-1}\left(\sum_{l:l>k}\int_{t_{j_l}}^{t_{j_{l-1}}}s^{-2}c_0^{k-l}M_{{\textbf{0}}}(s) \ud s \right)\ud t. 
\end{align*}
Swapping the order of integration, so that we compute the sum in $k$ first, since $c_0<1$, this is bounded by
\begin{align*}
C\sum_{l}\int_{t_{j_l}}^{t_{j_{l-1}}}s^{-2}M_{\textbf{0}}(s) \ud s,
\end{align*}
and so by Assumption \ref{assum:Main}, $A^{+}(t)$ satisfies the desired estimate, and likewise so does $A^{-}(t)$. We are left to prove Lemma \ref{lem:harmonic-improved}.
\\
\\
\textit{Proof of Lemma \ref{lem:harmonic-improved}:}
The functions $u^{\pm}({x})$ are harmonic inside the regions $\{u^{\pm}>0\}$, and satisfy Neumann boundary conditions on $B_1(\textbf{0})\cap\pa\Omega$. Therefore, reflecting across the convex boundary $\pa\Omega$, we obtain a function $\tilde{u}^{+}$ which is a solution of an elliptic equation in divergence form, with bounded measurable coefficients on a domain $\tilde{\Omega}^{+}$. Moreover, $\tilde{u}^{+}$ vanishes on the part of the boundary consisting of $\pa\{u^{+}>0\}$ and its reflection. For each $s\in(0,1)$ we also denote $\tilde{\Omega}_s^{+}$ to be the subset of $\tilde{\Omega}^{+}$ coming from the double of $B_s(\textbf{0})\cap\pa\Omega\cap\{u^+>0\}$. We define $\tilde{\Omega}^{-}$, $\tilde{\Omega}_s^{-}$, and $\tilde{u}^{-}$ analogously. 
\\
\\
Since $u^{\pm}(\textbf{0}) = 0$, with $\textbf{0}\in\pa\tilde{\Omega}_t^{\pm}$, if $\tilde{\Omega}_t^{\pm}$ were to both satisfy an exterior cone condition at $\textbf{0}$, then $u^{\pm}$ are H\"older continuous for some exponent $\gamma>0$, and the estimate in the statement of the lemma follows immediately. However, we have not established this exterior cone condition, and so we proceed as follows: Let $j\in J_1$, and consider the interval $I_j = [t_j,t_j']$. By the definition of $J_1$, we have $\Phi(t_j)\geq\Phi(t_j')$, and so in particular the integral of $\Phi'(t)/\Phi(t)$ over $I_j$ is non-positive. By Proposition \ref{prop:Main-half}, we therefore have
\begin{align} \label{eqn:harmonic-improved1}
\int_{I_j}t^{-1}(\beta^{+}(t) + \beta^{-}(t)-2)\ud t \leq0,
\end{align}
 with $\beta^{\pm}(t)$ the characteristic exponents from \eqref{eqn:Phi-deriv6}. Suppose that the measure of either $\tilde{\Omega}_{t_j'}^{+}$ or $\tilde{\Omega}_{t_j'}^{-}$ is less than $\eps_0 t_j^3$, for a small constant $\eps_0>0$. Then, by taking $\eps_0>0$ sufficiently small, the eigenvalues $\la^{\pm}(t)$ from Definition \ref{defn:DN} are sufficiently large for at least half of the values of $t\in I_j$ in order to contradict \eqref{eqn:harmonic-improved1}. Therefore, the sets $\tilde{\Omega}_{t_j'}^{+}$ and $\tilde{\Omega}_{t_j'}^{-}$ both have measure comparable to $t_j^3$. Applying Theorem 8.27 in \cite{GT} implies that there exists a constant $0<c_0<1$ such that $\left|\tilde{u}^{+}({x})\right| \leq  c_0 \max_{\tilde{\Omega}_{t_j'}^{+}}\left|\tilde{u}^{+}\right|$ for ${x}\in \tilde{\Omega}_{t_j}^{+}$, and likewise for $\tilde{u}^{-}(x)$. Repeating this estimate for each interval $I_j$ with $j\in J_1$ therefore proves the lemma. 
\end{proof}

\section{Lipschitz continuity of the minimizer $u$} \label{sec:Lipschitz}

In this section we will use the control on $\Phi(t)$ from Theorem \ref{thm:Phi} to prove Theorem \ref{thm:Lipschitz}, which we first state again:
\begin{thm} \label{thm:Lipschitz1}
 Suppose that $\Om$ is a convex domain, and let $u$ be a minimizer
of \eqref{eq:ACfunctional} among functions equal to $u_0$ on the compact set $K$.
After suitable dilation and translation, we may assume $0 \in \pa\Om$,
$\Om$ has diameter at least $2$, and $\bar B_2 \cap K = \emptyset$.  
 Under \emph{Assumption \ref{assum:Main}}, there exists a constant $C$, depending only on the $C^{\eps}$-norm of $u$ in $B_2 \cap \bar \Om$, 
 the Lipschitz constant of $\pa\Omega$, and the constant $C_*$ appearing in the  Dini condition, such that 
 \begin{align*}
 \sup_{\Omega\cap B_{1}}\left|\nabla u\right|\leq C.
 \end{align*}
\end{thm}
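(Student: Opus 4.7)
The plan is to deduce the Lipschitz bound from the uniform upper bound $\Phi(t) \le C$ of Theorem \ref{thm:Phi}, following the classical ACF strategy \cite{ACF} adapted to the convex Neumann setting as in \cite{Ra}. The heuristic is that once the ACF monotonicity quantity is controlled at all scales, each phase $u^\pm$ grows at most linearly away from the free boundary, and this linear growth---together with harmonicity inside each phase and the Neumann condition on $\partial\Omega$---upgrades to a pointwise gradient bound.

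The first step I would carry out is the reduction of the pointwise gradient bound to a linear growth bound at free boundary points. For any $x \in \Omega \cap B_1$ with $u(x) > 0$, let $x_0$ be a closest point of $\partial \{u > 0\}$ to $x$ and set $d = |x - x_0|$. Since $u^+$ is harmonic on $B_d(x) \cap \Omega$ with Neumann conditions on $B_d(x) \cap \partial \Omega$, convexity of $\Omega$ permits a reflection/doubling argument (as in Section \ref{sec:harmonic}) together with standard interior gradient bounds for the resulting divergence-form elliptic equation to yield
\[
|\nabla u(x)| \le \frac{C}{d} \sup_{B_{2d}(x) \cap \Omega} u^+ .
\]
An analogous bound holds if $u(x)<0$. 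It therefore suffices to prove that at every free boundary point $x_0 \in \bar\Omega \cap B_1$ and every $0 < r < 1$ one has $\sup_{B_r(x_0) \cap \Omega} u^\pm \le C r$.

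To convert $\Phi(r) \le C$ into this linear growth bound, I would proceed as follows. By subharmonicity of $u^\pm$, the weak Neumann condition on $\partial\Omega$, and the $L^\infty$/$L^2$ majorant estimates of Proposition \ref{prop:harmonic2} applied to the harmonic extensions $v^\pm$ from Section \ref{sec:harmonic}, one can estimate
\[
\sup_{B_{r/2}(x_0) \cap \Omega} (u^\pm)^2 \le \frac{C}{r^{n-1}} \int_{\partial B_r(x_0) \cap \Omega} (u^\pm)^2 \, d\sigma .
\]
Combining this with the ACF identity \eqref{eqn:Phi-deriv3} and the uniform lower bound $\alpha^\pm(r) \ge c > 0$ implied by Proposition \ref{prop:Main2}, the $L^2$ mean of $u^\pm$ on $\partial B_r(x_0) \cap \Omega$ is controlled by the square root of the corresponding weighted Dirichlet integral that enters $\Phi(r)$. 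This yields, at every free boundary point,
\[
M^+(r/2)\, M^-(r/2) \le C r^2, \qquad M^\pm(\rho) := \sup_{B_\rho(x_0) \cap \Omega} u^\pm .
\]

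The final step turns the product estimate into the sum estimate by the usual dichotomy. If both phases are genuinely present in $B_r(x_0) \cap \Omega$, minimality of $u$ together with the positivity of $Q$ supplies the non-degeneracy $M^\mp(r) \ge c r$, and combining this with the product bound gives $M^\pm(r) \le C r$. If instead one phase, say $u^-$, vanishes identically in $B_r(x_0) \cap \Omega$, then $u = u^+$ is a one-phase minimizer there and the desired linear growth at a convex Neumann boundary point is precisely the Lipschitz bound of \cite{Ra}. Combining the two cases and inserting the result into the gradient estimate of the first paragraph yields the stated bound $\sup_{\Omega \cap B_1} |\nabla u| \le C$. The main obstacle I expect is the intermediate $L^\infty$ estimate in the second paragraph: passing from the bound on the weighted ACF product $\Phi$ to an unweighted $L^\infty$ growth estimate on each phase requires absorbing the boundary error $A^\pm(r)$ of Definition \ref{defn:A-plus}, and this is precisely where the harmonic-measure machinery (and hence the Dini condition) developed in Section \ref{sec:harmonic} must re-enter the argument.
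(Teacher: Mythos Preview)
Your reduction in the first paragraph is fine and matches the paper's setup. The difficulties are in the two subsequent steps.

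\textbf{Passing from $\Phi(r)\le C$ to $M^+(r/2)M^-(r/2)\le Cr^2$.} You invoke \eqref{eqn:Phi-deriv3} together with a uniform bound $\alpha^\pm(r)\ge c>0$ ``implied by Proposition~\ref{prop:Main2}''. Neither ingredient works as stated. Inequality \eqref{eqn:Phi-deriv3} bounds the weighted Dirichlet integral \emph{above} by spherical integrals of $u^\pm$, so it goes in the wrong direction for controlling $\int_{\partial B_r}(u^\pm)^2$ by the factor of $\Phi$. And Proposition~\ref{prop:Main2} gives only an integral bound on $[2-\alpha^+(t)-\alpha^-(t)]_+/t$; it does not give pointwise lower bounds on either $\alpha^\pm$ individually. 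The paper does not attempt this route: instead it fixes a point $z_0$ where $u>c_2^*M$ and integrates $\partial_\rho u^\pm$ along rays from $z_0$, producing directly
\[
c_1^*M \le C|\Gamma|^{1/2}\Bigl(\int_{B_1(x_0)\cap\Omega}|\nabla u^-|^2\Bigr)^{1/2},
\qquad
c_2^*|\Gamma|M \le C|\Gamma|^{1/2}\Bigl(\int_{B_1(x_0)\cap\Omega}|\nabla u^+|^2\Bigr)^{1/2},
\]
whose product is bounded by $\Phi$ (see \eqref{eqn:Lipthm3}--\eqref{eqn:Lipthm4}).

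\textbf{The dichotomy and non-degeneracy of $u^-$.} The more serious gap is your claim that ``minimality of $u$ together with positivity of $Q$ supplies $M^{\mp}(r)\ge cr$''. For the functional \eqref{eq:ACfunctional} only the positive phase carries the indicator $Q\,1_{\{u>0\}}$, so the standard competitor argument yields non-degeneracy for $u^+$ but says nothing about $u^-$: one can have $u^-$ present but arbitrarily small near the free boundary without contradicting minimality. The paper replaces this step by a different mechanism. It constructs a convex domain $\Omega_{x_0}$ (Proposition~\ref{prop:L1-harmonic}) with good Green's function bounds, lets $v$ be the harmonic function in $\Omega_{x_0}$ with $v=u$ on the Dirichlet part and Neumann elsewhere, and uses the variational inequality \eqref{eqn:upper-average},
\[
0=u(x_0)\le v(x_0)\le C_2,
\]
drawn from \cite[Theorem~4.1]{ACF} and \cite{GMR}. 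Since $v^+(x_0)\ge \tilde c_3 M$ by Harnack, this \emph{forces} $v^-(x_0)\ge \tfrac12\tilde c_3 M$ once $M$ is large, and the $\nabla G$ bound of Proposition~\ref{prop:L1-harmonic} then gives $\int_{\partial\Omega_{x_0}} u^-\,d\sigma\ge c_1^*M$. This is the substitute for non-degeneracy of $u^-$, and it is the step your outline is missing.
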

 Let $z \in \Omega\cap B_{1/2}$, and define $\rho({z})$ by  $\rho({z}) = \text{dist}({z},\{u=0\}).$  Since a harmonic function with normal derivative vanishing weakly on a convex domain is Lipschitz, we may assume that $\rho({z})$ is small enough that $B_{\rho({z})}({z}) \subset \Omega\cap B_1$. By otherwise rescaling around ${z}$ by
\begin{align*}
\tilde{u}({x}) = \rho({z})^{-1} u({z} + \rho({z})({x}-{z})),
\end{align*} 
we will assume that $\rho({z}) = 1$. Note that the Lipschitz norm is invariant under this rescaling. We want to use Theorem \ref{thm:Phi} to obtain a uniform upper bound on the size of $\left|u({z})\right|$. Let ${x}_0\in\Omega$ satisfy $u({x}_0)=0$ and $|{x}_0-{z}|=\rho({z}) = 1$. We will follow \lqu Another proof of Theorem 6.3' in \cite{ACF}, where the authors establish the Lipschitz continuity of the minimizer in the interior. To do so, they make use of the explicit harmonic measure on the ball, and so we require a domain containing ${x}_0$, which satisfies an equivalent harmonic measure estimate.
\begin{prop} \label{prop:L1-harmonic} 
There exist constants $c_1$, $C_1$, depending only on the Lipschitz norm of $\pa\Omega$,  such that for each ${x}_0\in \Omega\cap B_{1/2}$, we can form a  convex domain $\Omega_{{x}_{0}}\subset\Omega$, with the following properties:
\begin{enumerate}

\item[i)] The boundary of $\Omega_{{x}_{0}}$ consists of two parts $\pa\Omega_{{x}_{0},N}$ and $\pa\Omega_{{x}_{0},D}$. The first part is a (possibly empty) subset of $\pa\Omega$, and the second part ensures that $B_{c_1}({x}_{0}) \cap \Omega \subset \Omega_{{x}_{0}}$.

\item[ii)] Let $G$ be the Green's function for $\Omega_{{x}_{0}}$, with pole at ${x}_{0}$, with zero Dirichlet boundary conditions on $\pa\Omega_{{x}_{0},D}$, and zero Neumann boundary conditions (weakly) on $\pa\Omega_{{x}_{0},N}$. Then,
\begin{align*}
\norm{G}_{L^{\infty}\left(\Omega_{{x}_{0}}\backslash B_{c_1}({x}_0)\right)} \leq C_1, \qquad \norm{\nabla G}_{L^{\infty}\left(\pa\Omega_{{x}_{0},D}\right)} \leq C_1 .
\end{align*}

\end{enumerate}
\end{prop}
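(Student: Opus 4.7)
The plan is to build $\Omega_{x_0}$ by intersecting $\Omega$ with an appropriately placed ball, and to deduce ii) from standard Lipschitz Green's function theory combined with the transversality framework of Section \ref{sec:harmonic}. Fix a small constant $c_1>0$ depending only on the Lipschitz norm of $\partial\Omega$. If $\mathrm{dist}(x_0,\partial\Omega)\geq 2c_1$, take $\Omega_{x_0}=B_{2c_1}(x_0)$: then $\partial\Omega_{x_0,N}=\emptyset$, the Green's function of $\Omega_{x_0}$ is explicit, and both bounds are immediate. Otherwise, choose a nearest point $y_0\in\partial\Omega$ to $x_0$ with supporting hyperplane $P$ and outer unit normal $\nu_0$, and take $\Omega_{x_0}=B_R(z_0)\cap\Omega$, where $z_0=y_0+t\nu_0$ lies just outside $\Omega$ and $R,t$ are chosen (depending only on the Lipschitz norm of $\partial\Omega$) so that $B_{c_1}(x_0)\subset B_R(z_0)$ and the Dirichlet sphere $\partial B_R(z_0)\cap\overline\Omega$ meets the Neumann portion $\partial\Omega\cap B_R(z_0)$ transversally in the sense of \eqref{eqn:harmonic1}. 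Placing $z_0$ slightly outside $\Omega$ rather than on or inside it is what makes the two outer normals point into opposite half-spaces at the corner, with a quantitative lower bound on the angle following from convexity of $\Omega$ and the reasoning of Lemma \ref{lem:transverse}. Convexity of $\Omega_{x_0}$ and the containment $B_{c_1}(x_0)\cap\Omega\subset\Omega_{x_0}$ are then immediate, establishing i).

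For the $L^\infty$ bound on $G$ in the boundary case, I would reflect $G$ across the Neumann piece $\partial\Omega_{x_0,N}$ (as the authors do in the proof of Proposition \ref{prop:harmonic}) to obtain a function $\widetilde G$ satisfying a divergence-form elliptic equation with bounded measurable coefficients on the doubled domain; $\widetilde G$ vanishes on the Dirichlet boundary of this doubled domain and has a single pole at $x_0$. The Littman-Stampacchia-Weinberger bounds \cite{LSW} then yield $\widetilde G\le C$ away from $x_0$, with $C$ depending only on the Lipschitz constant of the doubled domain, hence only on that of $\partial\Omega$. Alternatively, one may produce an explicit super-solution
\[
\widetilde G(x)=G_{B_R(z_0)}(x,x_0)+G_{B_R(z_0)}(x,x_0^*),
\]
with $x_0^*$ the reflection of $x_0$ across $P$: convexity gives $x_0^*\notin\Omega$, so $-\Delta\widetilde G=\delta_{x_0}$ in $\Omega_{x_0}$, and with the placement above one checks $\partial_\nu\widetilde G\le 0$ on the Neumann portion, so that $G\le\widetilde G$ on $\Omega_{x_0}$ by the maximum principle and Hopf's lemma.

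The gradient bound $\|\nabla G\|_{L^\infty(\partial\Omega_{x_0,D})}\le C_1$ is the main obstacle. On compact subsets of the smooth spherical Dirichlet portion that stay a fixed distance from the corner $\partial B_R(z_0)\cap\partial\Omega$, this follows from standard boundary regularity for harmonic functions using the $L^\infty$ bound above. Uniformity up to the corner requires exactly the transversality built into the construction: it is the hypothesis of Proposition \ref{prop:harmonic}, whose reverse H\"older estimate on the harmonic measure density $\partial_\nu G$, combined with the normalization $\int_{\partial\Omega_{x_0,D}}\partial_\nu G\,\ud\sigma\le 1$ and the nontangential maximal function argument used in the proof of Proposition \ref{prop:harmonic2}, upgrades the pointwise bound away from the corner to the desired uniform $|\nabla G|\le C_1$ on all of $\partial\Omega_{x_0,D}$.
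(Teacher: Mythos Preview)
Your construction of $\Omega_{x_0}$ as $B_R(z_0)\cap\Omega$ with $z_0$ pushed slightly outside $\Omega$, and the $L^\infty$ bound on $G$ via reflection across the Neumann face and \cite{LSW}, are essentially what the paper does. The genuine gap is in the gradient bound on $\partial\Omega_{x_0,D}$. The transversality condition \eqref{eqn:harmonic1} together with Propositions \ref{prop:harmonic} and \ref{prop:harmonic2} yields only a reverse H\"older inequality for $K_1=\partial_\nu G$ on surface balls; this gives $L^2$ control (and $L^{2+\varepsilon}$ by Gehring), not $L^\infty$. Transversality alone is in fact insufficient: already in the plane, a mixed Dirichlet--Neumann corner of interior opening $\theta_0$ has leading homogeneous solution $r^{\pi/(2\theta_0)}$, so $|\nabla G|$ blows up at the corner whenever $\theta_0>\pi/2$. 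Your appeal to Lemma \ref{lem:transverse} for a ``quantitative lower bound on the angle'' also begs the question, since that lemma applies only for $t\in S_c$, i.e.\ at scales where $\partial\Omega$ is already nearly conical; you have not tied the choice of $R$ and $t$ to such a scale.

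The paper closes this gap by arranging something strictly stronger than transversality. It chooses $c_1$ with $2c_1\in S_c$ and centers the ball at $z_0$ a fixed fraction of $c_1$ outside $\Omega$ along the axis of Lemma \ref{lem:graph}; then for every corner point $y\in\partial B_{2c_1}(z_0)\cap\partial\Omega$ one has
\[
(y-z_0)\cdot\nu(y)\;\le\;M_{\mathbf 0}(|y|)-z_0\cdot\nu(y)\;\le\;c\,|y|-c^*|z_0|\;<\;0,
\]
so the Dirichlet sphere and the Neumann face meet at a uniformly \emph{acute} interior angle. With an acute corner a linear function is a barrier: a suitable affine function vanishing at the corner is nonnegative in $\Omega_{x_0}$ nearby and has nonpositive outer normal derivative on the Neumann face, so by the maximum principle $G$ decays linearly toward the corner, whence $\|\nabla G\|_{L^\infty(\partial\Omega_{x_0,D})}\le C_1$. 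To repair your argument you must build in this acute-angle condition, which requires choosing the scale of the ball via $M_{y_0}$ (equivalently, via $S_c$), not merely placing its center outside $\Omega$.
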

We will prove Proposition \ref{prop:L1-harmonic} in the Appendix, and first use it to complete the proof of Theorem \ref{thm:Lipschitz1}.
\begin{proof}{Theorem \ref{thm:Lipschitz1}}
We suppose that $u({z}) = M>0$, and obtain an upper bound on $M$ in terms of $C^{\eps}$-norm of $u$ in $B_2 \cap \bar \Om$,  the Lipschitz constant of $\pa\Omega$, and the constant $C_*$ appearing in the  Dini condition. In particular, all the constants in the proof below depend implicitly on the Lipschitz norm of $\pa\Omega$. Let ${x}_0\in \Omega$ satisfy $u({x}_0) = 0$,with $\left|{x}_0-{z}\right| = \rho({z}) = 1.$ We now form the domain $\Omega_{{x}_0}$ given in Proposition \ref{prop:L1-harmonic}.  Applying the Harnack inequality to the harmonic function $u^{+}({x})$ in $B_{1}({z})\cap\Omega$ (which applies to harmonic functions satisfying weak Neumann conditions on a convex boundary, \cite{Ra}), there exists a point ${y}$ in $\pa \Omega_{x_0}$, and  constants $\tilde{c}_1$, $\tilde{c}_2$, such that 
\begin{align*}
u({x}) > \tilde{c}_1M \text{ for } x \in B_{\tilde{c}_2}({y})\cap \Omega_{{x}_0}.
\end{align*}
Let $v$ be the harmonic function in the set $\Omega_{{x}_{0}}$, such that $v = u$ on $\pa\Omega_{{x}_{0},D}$, and $\pa_{\nu}v = 0$ weakly on $\pa\Omega_{{x}_{0},N}$. Then, following the proofs of Theorem 4.1 in \cite{ACF} (in the interior case in $n$ dimensions), and Lemmas 4 and 5 in \cite{GMR} (near a convex boundary in $2$ dimensions),  there exists a constant $C_2$ such that  
\begin{align} \label{eqn:upper-average}
0=u({x}_{0}) \leq v({x}_{0}) \leq C_2.
\end{align}
We also write $v = v^{+}-v^{-}$, with $v^{\pm}\geq0$ given to be harmonic in $\Omega_{{x}_{0}}$, such that $v^{\pm} = u^{\pm}$ on $\pa\Omega_{{x}_{0},D}$. In particular, we have $v^{+}({x}_0) \geq \tilde{c}_3M$, for a constant $\tilde{c}_3>0$. For $M$ sufficiently large, \eqref{eqn:upper-average} provides a lower bound on $v^{-}({x}_0)$ in terms of $v^{+}({x}_0)$ and implies that $v^{-}({x}_0) \geq \tfrac{1}{2}\tilde{c}_3M$. By the estimates on $\nabla G$ from Proposition \ref{prop:L1-harmonic}, we thus must have
\begin{align} \label{eqn:Lipthm1}
\int_{\pa \Omega_{{x}_0}}u^{-}(x) \ud \sigma({x}) \geq c^*_1M,
\end{align}
for a constant $c^*_1>0$. The Harnack inequality also ensures that there exists a point ${z}_0\in \Omega_{{x}_0}$, and constants $c_2^*$, $c_3^*>0$ such that 
\begin{align} \label{eqn:Lipthm2} 
u({x})>c_2^*M \text{ for } {x}\in B_{c_3^*}({z}_0) \subset  \Omega_{{x}_0}.
\end{align}
We will now combine \eqref{eqn:Lipthm1} and \eqref{eqn:Lipthm2} with Theorem \ref{thm:Phi} to obtain an upper bound on $M$.  Using polar coordinates $(\rho,\omega)$ in $\R^n$, with origin at ${z}_0$, let $\Gamma$ be the set of directions for which $u(x)<0$ at some point $x\in\Omega_{x_0}$ on the ray emanating from $z_0$.  Then, using \eqref{eqn:Lipthm1} we have
\begin{align} \label{eqn:Lipthm3}
c^*_1M \leq \int_{\pa \Omega_{{x}_0}} u^{-}({x}) \ud\sigma({x})=  \int_{\Gamma} \left(\int \pa_{\rho}u^{-}({x}) \ud \rho\right) \ud \sigma(\omega) \leq C_1\left|\Gamma\right|^{1/2}\left(\int_{B_{1}({x}_0)\cap\Omega}\left|\nabla u^{-}\right|^2  \ud x\right)^{1/2},
\end{align}
where we have used that $u^{-}(x) = 0$ in $B_{c_3^*}({z}_0)$. Using \eqref{eqn:Lipthm2} and the definition of $\Gamma$ we also have
\begin{align} \label{eqn:Lipthm4}
c_2^*\left|\Gamma\right|M \leq -\int_{\Gamma} \left(\int \pa_{\rho}u^{+}({x}) \ud \rho\right) \ud \sigma(\omega) \leq C_1\left|\Gamma\right|^{1/2}\left(\int_{B_{1}({x}_0)\cap\Omega}\left|\nabla u^{+}\right|^2  \ud x\right)^{1/2}.
\end{align}
 Multiplying the inequalities in \eqref{eqn:Lipthm3} and \eqref{eqn:Lipthm4}, and using Theorem \ref{thm:Phi} then gives an upper bound on $M$.
\\
\\
In the case, where $u({z})<0$, we can proceed in precisely the same way with $v^{+}$ and $v^{-}$ interchanged, and again use \eqref{eqn:upper-average} to obtain a lower bound on $v^{+}({x}_0)$ in terms of $v^{-}({x}_0)$. 
\\
\\
Since $u$ is harmonic in $\{u>0\}$ and $\{u<0\}$, the upper bound on $|u({z})|$ of $\left|u({z})\right| \leq C\rho({z})$, with $\rho({z}) = \text{dist}\left({z},\{u=0\}\right)$ implies the Lipschitz continuity statement in Theorem \ref{thm:Lipschitz}.
\end{proof}

\section{An example of failure of the Dini condition on the convex body in dimensions $3$ and higher} \label{sec:assum}

In this section we will discuss the Dini condition, Assumption \ref{assum:Main}, placed on $\Omega$. Recalling the definition  
 $M_{{x}}(t)$ from Definition \ref{defn:N}, Assumption \ref{assum:Main} states that
\begin{align*}
\int_{0^{+}}^{1}\frac{M_{{x}}(t)}{t^2}\ud t < C^*,
\end{align*}
uniformly over points ${x}\in\pa\Omega\cap B_1$. In this section, we will establish the following three statements:
\begin{prop} \label{prop:Dini1}
Suppose that $\Omega$ is a convex $n$-dimensional domain, such that $\pa\Omega$ has $C^{1,\beta}$-regularity. Then, the Dini condition on $t^{-1}M_{x}(t)$, \emph{Assumption \ref{assum:Main}}, holds. 
\end{prop}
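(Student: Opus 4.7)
The plan is to use the $C^{1,\beta}$ regularity to bound $M_x(t)$ by $Ct^{1+\beta}$, which is more than enough to ensure the integral $\int_0^1 M_x(t)/t^2 \, dt$ converges, uniformly in $x$. The key observation is that convexity kills the leading-order term in $\nu(y)\cdot(y-x)$, leaving only a term controlled by the modulus of continuity of the Gauss map.

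More precisely, I would proceed as follows. Fix $x\in B_1 \cap \partial\Omega$ and let $\nu(x)$ denote the outer unit normal at $x$ (which is unique by $C^{1,\beta}$ regularity). Since $\Omega$ is convex, the tangent hyperplane at $x$ is a supporting hyperplane, so for every $y\in\partial\Omega$ we have $\nu(x)\cdot(y-x)\le 0$. Writing
\[
\nu(y)\cdot(y-x) \;=\; \nu(x)\cdot(y-x) \;+\; \bigl(\nu(y)-\nu(x)\bigr)\cdot(y-x),
\]
the first term is nonpositive, and the Cauchy--Schwarz inequality together with the $C^{0,\beta}$ bound on the Gauss map (which follows from the $C^{1,\beta}$ regularity of $\partial\Omega$) gives
\[
\nu(y)\cdot(y-x) \;\le\; \bigl|\nu(y)-\nu(x)\bigr|\,|y-x| \;\le\; C\,|y-x|^{1+\beta},
\]
with $C$ depending only on the $C^{1,\beta}$-norm of $\partial\Omega$ in $B_2$. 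Taking the supremum over $y$ with $|y-x|\le t$ yields $M_x(t)\le Ct^{1+\beta}$.

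Integrating,
\[
\int_{0^+}^{1}\frac{M_x(t)}{t^2}\,dt \;\le\; C\int_0^1 t^{\beta-1}\,dt \;=\; \frac{C}{\beta},
\]
with a bound independent of $x\in B_1\cap\partial\Omega$. This verifies Assumption \ref{assum:Main} with $C_*=C/\beta$.

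There is essentially no obstacle here; the only minor point worth noting is that the $C^{1,\beta}$ regularity must be interpreted uniformly in a neighbourhood containing $B_1\cap\partial\Omega$ (e.g., on $B_2\cap\partial\Omega$), so that the Hölder constant for $\nu$ is uniform in the base point $x$. Since by assumption $\partial\Omega$ is globally $C^{1,\beta}$, this uniformity is automatic, and the estimate $M_x(t)\le Ct^{1+\beta}$ holds with a single constant $C$.
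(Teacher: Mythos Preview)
Your proof is correct and reaches the same bound $M_x(t)\le Ct^{1+\beta}$ as the paper, from which the Dini condition follows immediately. The paper obtains this bound by writing $\partial\Omega$ locally as a graph $x_n=f(x')$ and computing $\nu(y)\cdot y$ in those coordinates as $(1+|\nabla f|^2)^{-1/2}\bigl(t\partial_t f - f\bigr)$, which is $O(t^{1+\beta})$ for $f\in C^{1,\beta}$; your coordinate-free decomposition via the Gauss map, using convexity to discard $\nu(x)\cdot(y-x)\le 0$ and H\"older continuity of $\nu$ to control the remainder, is a cleaner route to the same estimate.
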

\begin{prop} \label{prop:Dini2}
Suppose that $\Omega$ is a convex $2$-dimensional domain. Then, the Dini condition on $t^{-1}M_{x}(t)$, \emph{Assumption \ref{assum:Main}}, holds. 
\end{prop}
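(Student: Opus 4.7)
The plan is to reduce the Dini condition at each $x \in \pa\Omega \cap B_1$ to a one-variable integral involving the local graph of $\pa\Omega$ at $x$. First I would choose a supporting line of $\Omega$ at $x$ and, after translating and rotating, assume $x = \mathbf{0}$ with supporting line $\{y_2 = 0\}$ and outer normal $e_2$. The Lipschitz hypothesis on $\pa\Omega$ then supplies uniform $a \in (0, 1)$ and $L > 0$ such that $\pa\Omega$ is locally the graph $y_2 = -f(y_1)$ for $|y_1| < a$, with $f$ convex, nonnegative, $L$-Lipschitz and $f(0) = 0$. Since $|\nu| = 1$ and $|y - x| \le t$ force $M_x(t) \le t$, the contribution from $t \in [a, 1]$ is trivially bounded and it remains to control $t \in (0, a)$. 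At $s > 0$ where $f$ is differentiable the outer normal at $y = (s, -f(s))$ is $(f'(s), 1)/\sqrt{1 + f'(s)^2}$, so
\begin{equation*}
\nu(y) \cdot (y - x) = \frac{s f'(s) - f(s)}{\sqrt{1 + f'(s)^2}} \le h(s), \qquad h(s) := s f'(s) - f(s).
\end{equation*}
Convexity of $f$ with $f(0) = 0$ gives $h \ge 0$ and $h'(s) = s f''(s) \ge 0$ as a measure, so $h$ is nondecreasing. At the countably many corners of $\pa\Omega$ the normal cone is an interval and maximising over it replaces $f'(s)$ by its right-limit without changing the bound. Because $|y - x| \ge s$, the constraint $|y - x| \le t$ forces $s \le t$, yielding the right-side bound $h(t)$; running the symmetric argument with $\tilde f(s) := f(-s)$ on $s < 0$ would give $M_x(t) \le h(t) + \tilde h(t)$ for $t \in (0, a)$.

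\textbf{Dini integral via Fubini.} Next I would estimate $\int_0^a h(t)/t^2\, dt$ by writing $h(t) = \int_{[0, t]} s\, df''(s)$ and swapping the order of integration:
\begin{equation*}
\int_0^a \frac{h(t)}{t^2}\, dt = \int_{[0, a]} s \int_s^a \frac{dt}{t^2}\, df''(s) = \int_{[0, a]} \left(1 - \frac{s}{a}\right) df''(s) \le f'(a^-) - f'(0^+) \le 2L,
\end{equation*}
with the same estimate for $\tilde h$. Combining with the trivial bound on $[a, 1]$ then gives $\int_0^1 t^{-2} M_x(t)\, dt \le C(L)$ uniformly in $x \in \pa\Omega \cap B_1$, which is precisely Assumption \ref{assum:Main}.

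\textbf{Main obstacle.} The argument is essentially one-dimensional: in the plane the entire convexity defect of $\pa\Omega$ at $x$ is captured by the scalar $h(s) = s f'(s) - f(s)$, whose derivative $s f''(s)$ carries an extra factor of $s$ that cancels exactly one power of $t$ in the weight $t^{-2}$ after Fubini. The hard part is therefore not the planar argument itself --- there is no serious obstacle, the only mild delicacy being that $f''$ is merely a positive Borel measure, which is why Fubini replaces a direct integration by parts. Rather, the conceptual obstruction is that no analogue of this single-variable identity exists in dimensions $n \ge 3$, where the convexity defect couples multiple tangential directions; that is precisely the mechanism exploited in Section \ref{sec:assum} to construct convex bodies in $\R^3$ for which the Dini condition fails.
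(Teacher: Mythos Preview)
Your proof is correct. The overall architecture matches the paper's: pass to a local graph $y_2=-f(y_1)$ over a supporting line at $x$, bound $M_x(t)$ by the one-variable quantity $h(t)=tf'(t)-f(t)$ (together with its mirror $\tilde h$), and then show $\int_0^a t^{-2}h(t)\,dt\le C(L)$.

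The difference is in how the key one-variable integral is bounded. The paper routes through Proposition~\ref{prop:Dini-average} and Lemma~\ref{lem:1d}: it sandwiches $tf'(t)-f(t)$ between $\tfrac12(f(t)-f(t/2))$ and $2(f(2t)-f(t))$, and then controls $\int_0^1 t^{-2}\bigl(g(t)-2g(t/2)\bigr)\,dt$ by the dyadic telescoping identity $g(1)=\sum_{j=0}^N 2^j\bigl(g(2^{-j})-2g(2^{-j-1})\bigr)+2^{N+1}g(2^{-N-1})$; the two-dimensional case then drops out because $\mathbb{S}^0$ consists of two points, so the angular average in Proposition~\ref{prop:Dini-average} coincides with the supremum. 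You instead write $h(t)=\int_{(0,t]}s\,df''(s)$ and apply Fubini directly, obtaining the clean bound $\int_0^a t^{-2}h(t)\,dt\le f'_+(a)-f'_+(0)\le 2L$ without any dyadic decomposition or passage through an averaged statement. Your argument is slightly more direct for the planar result; the paper's detour through the average is organised that way because Proposition~\ref{prop:Dini-average} is of independent interest in higher dimensions, where it isolates precisely the gap between the angular average and the supremum that allows the Dini condition to fail.
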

\begin{prop} \label{prop:Dini3}
There exists convex domains $\Omega$ in $3$ and higher dimensions for which the Dini condition on $t^{-1}M_{x}(t)$, \emph{Assumption \ref{assum:Main}}, fails. Moreover, the condition fails in the sense that $t^{-2}M_{x}(t)$ fails to be integrable at a point $x\in\pa\Omega$.
\end{prop}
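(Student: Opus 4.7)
The plan is to construct an explicit convex body $\Omega \subset \R^3$ with $\etb \in \pa \Omega$ and $\int_{0^+}^1 t^{-2} M_{\etb}(t)\, dt = +\infty$; the higher-dimensional examples then follow from $\Omega \times (-\tfrac12, \tfrac12)^{n-3}$, which is convex in $\R^n$ with $\etb$ still on its boundary and inherits the divergence because the boundary integrand for the product dominates that for the factor. The starting contrast with Proposition~\ref{prop:Dini2} identifies precisely what the 3D construction must break: in 2D, if $\pa\Omega$ is locally the graph $y=f(x)$ of a convex function with $f(0)=0$, then $\nu(y)\cdot y/|y|^2 \leq (f/x)'(x)$, and the integral $\int_0^1 (f/x)'\,dx = f(1)-\lim_{x\to 0^+} f(x)/x$ is finite by Lipschitz control of $f$. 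A 3D construction must genuinely sidestep this telescoping.

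I work near $\etb$ with $\pa\Omega$ a graph $z=F(x,y)$ of a convex non-negative $F$ with $F(\etb)=0$, written in polar coordinates as $F(r,\theta)=r+r\phi(r,\theta)$ with $\phi\geq 0$ bounded and vanishing at $r=0$. Euler homogeneity for the cone function $r$ gives, at a boundary point $y=(x,y,F(x,y))$,
\[
\nu(y)\cdot y \;=\; \frac{r^2\,\pa_r\phi(r,\theta)}{\sqrt{1+|\nabla F|^2}},
\]
so $M_{\etb}(t)/t^2 \asymp \sup_\theta \pa_r\phi(t,\theta)$ up to bounded factors. The key 3-dimensional feature is that while $\int_0^1 \pa_r\phi(r,\theta_0)\,dr = \phi(1,\theta_0)-\phi(0,\theta_0)$ is uniformly bounded along each fixed angular slice, the pointwise supremum $\sup_\theta \pa_r\phi(r,\theta)$ need not belong to $L^1([0,1])$; this noncommutativity of supremum and integral is the extra room compared to the 2D analysis.

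The construction takes $\phi=\sum_k \phi_k$, where each $\phi_k$ is a non-negative convex piece (concretely, a truncated affine ramp $h_k(\nu_k\cdot(x,y)-r_k)_+\,\zeta_k(\theta)$ with $\nu_k=(\cos\theta_k,\sin\theta_k)$, $r_k=2^{-k}$, angular cutoff $\zeta_k$ of width $\Delta_k$ centered at $\theta_k$, and amplitude $h_k$), arranged with pairwise disjoint supports by choosing $\theta_k$ along an equidistributed Kronecker sequence such as $\theta_k=k\sqrt{2}\bmod 2\pi$. At the boundary point corresponding to the $k$-th active piece, the telescoping identity $\nabla\phi_k\cdot(x,y)-\phi_k=h_k r_k$ for the linear part gives $\nu(y)\cdot y \asymp h_k r_k$ and $|y|\asymp r_k$, so $M_{\etb}(t)/t \asymp h_k$ for $t$ in a dyadic band around $r_k$, and consequently
\[
\int_0^1 \frac{M_{\etb}(t)}{t^2}\,dt \;\gtrsim\; \sum_k h_k.
\]
Choosing $h_k=1/k$ yields the divergent harmonic series.

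The main obstacle is the simultaneous requirement that $F=r+r\phi$ be convex on $\R^2$, that $\sum h_k$ diverge, and that the angular placements $\theta_k$ be compatible with disjoint-support and non-interference conditions. Convexity reduces, via the polar Hessian computation giving $H_{rr}=2\phi_r+r\phi_{rr}$, $H_{\theta\theta}=(1+\phi)/r+\phi_r+\phi_{\theta\theta}/r$, $H_{r\theta}=\phi_{r\theta}$, to checking that the background tangential curvature $1/r$ arising from the cone $|(x,y)|$ dominates the negative contribution $\phi_{\theta\theta}/r$ of each angular cutoff, requiring $h_k/\Delta_k^2 \ll 1/r_k$. Combined with angular gaps $\Delta_k \gtrsim 1/k$ available from the three-distance theorem for Kronecker sequences, parameters can be balanced with room to spare (for instance, $\Delta_k=k^{-1/2}$ and $h_k=1/k$), and a case-by-case verification of $H\geq 0$ on each $\phi_k$-support completes the convexity check. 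The heuristic reason the construction succeeds in 3D but fails in 2D is the presence of this divergent background tangential curvature of the cone $|(x,y)|$, a 3-dimensional feature unavailable in 2D where the boundary is 1-dimensional and any perturbation creating angular variation would immediately destroy convexity.
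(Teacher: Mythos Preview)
Your proposal has the right high-level intuition---localized angular bumps at dyadic radial scales so that $\sup_\theta$ picks up a fresh contribution at each scale---and this is also the paper's strategy. However, the specific construction you describe has two genuine gaps that are exactly the obstacles the paper's much more intricate construction is designed to overcome.

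\textbf{Convexity fails at the edges of the angular cutoffs.} With $F=r+r\phi$ and $\phi_k=h_k(\nu_k\cdot(x,y)-r_k)_+\,\zeta_k(\theta)$, on the active region one has $\phi_{k,rr}=0$ and $\phi_{k,r}=h_k\cos(\theta-\theta_k)\zeta_k(\theta)$, so
\[
H_{rr}=2\phi_r+r\phi_{rr}=2h_k\cos(\theta-\theta_k)\,\zeta_k(\theta),
\qquad H_{r\theta}=\phi_{r\theta}=h_k\bigl(-\sin(\theta-\theta_k)\zeta_k+\cos(\theta-\theta_k)\zeta_k'\bigr).
\]
At the edge of the support of $\zeta_k$ one has $\zeta_k\to 0$ while $|\zeta_k'|\sim\Delta_k^{-1}$, hence $H_{rr}\to 0$ but $|H_{r\theta}|\sim h_k/\Delta_k$, so the Hessian determinant $H_{rr}H_{\theta\theta}-H_{r\theta}^2$ is strictly negative there regardless of how large the background $1/r$ term in $H_{\theta\theta}$ is. Your convexity check only balances $H_{\theta\theta}$ against $\phi_{\theta\theta}/r$; it never confronts the cross term, and the cross term is fatal for piecewise-affine ramps with angular localization. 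The paper handles this by building $N(t,\theta)=t\pa_t f-f$ so that $\pa_t N$ (hence $H_{rr}$) has a strict \emph{positive lower bound everywhere}, specifically $\pa_t N\ge c_1 N^2/(\theta_k^6 t^2)$ (their Property~4), calibrated precisely so that $H_{rr}H_{\theta\theta}$ dominates $H_{r\theta}^2$.

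\textbf{The disjoint-support claim is incompatible with your parameters.} With $\Delta_k=k^{-1/2}$ the total angular measure $\sum_k\Delta_k$ diverges, so the supports cannot be pairwise disjoint in $[0,2\pi]$; the three-distance theorem gives gaps of order $1/k$ between the first $k$ Kronecker points, which forces $\Delta_k\lesssim 1/k$, but then $h_k/\Delta_k^2\gtrsim k^2 h_k$ and your $H_{\theta\theta}$ balance also collapses. The paper avoids the whole issue by \emph{not} using disjoint angular bumps across scales: instead, at each radial scale $[t_k,t_{k-1}]$ the entire circle is partitioned into $\theta_k^{-1}$ arcs, $N(t,\theta)$ is built on each arc, and the construction is glued across scales by enforcing $N(t_k,\theta)=t_k\delta_k$ independently of $\theta$ (their Property~1).
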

Before proving the propositions, we first note another expression that controls $M_x(t)$ for some points $x$ on the boundary of $\Omega$. Setting the base point $x$ to be the origin  $\textbf{0}$, suppose that $\textbf{0}$ is at the centre of a Lipschitz parameterization of $\pa\Omega$, in the sense that we can write $\pa \Omega$ as the graph of a convex function $x_n = f(x')\geq0$ for all $|x'|\leq 1$ in $\R^{n-1}$. 
\begin{defn} \label{defn:N-defn}
Let $f$ be the convex function as above. At points where $f$ is differentiable define
\begin{align} \label{eqn:N-defn}
N_{\etb}(t,\theta) = \left(1+|\nabla f(t,\theta)|^2\right)^{-1/2}\left\{t\pa_tf(t,\theta) - f(t,\theta)\right\},
\end{align}
where $(t,\theta)$ are polar coordinates in $\R^{n-1}$.
\end{defn}
 The function $N_{\textbf{0}}$ equals $\nu(x(t,\theta))\cdot x(t,\theta)$ at the point $x(t,\theta) = (x'(t,\theta),f(x'(t,\theta)))$. Since $t\pa_tf(t,\theta)-f(t,\theta)$ is increasing in $t$ (using the convexity of $f$), we have the relations
\begin{align} \label{eqn:N-ineq}
M_{\textbf{0}}(t) \leq \sup_{\theta\in \mathbb{S}^{n-2}} N_{\textbf{0}}(t,\theta) \leq M_{\textbf{0}}(Ct),
\end{align}
for a constant $C$ depending only on the Lipschitz norm of $f$. In particular, for $f\in C^{1,\beta}$, then $|M_{\textbf{0}}(t)|\leq Ct^{1+\beta}$, and the Dini assumption holds at this point. When $\Omega$ is $C^{1,\beta}$ any point on $\pa\Omega$ can be placed at the centre of such a Lipschitz parameterization (after a rescaling), this establishes Proposition \ref{prop:Dini1}.
\\
\\
We now turn to studying the Dini condition for a general convex domain. For any convex domain, we immediately see that $|M_{\textbf{0}}(t)|\leq Ct$, but this (just) misses ensuring that $t^{-2}M_{\textbf{0}}(t)$ is integrable. Our example of a convex body $\Omega$ in three and higher dimensions for which the Dini condition does not hold  is intricate, and exploits the fact that $M_{\textbf{0}}(t)$ measures the supremum of $N_{\textbf{0}}(t,\theta)$ over $\theta$, and as $t$ decreases, different angles $\theta$ maximize $N_{\textbf{0}}(t,\theta)$. To see why this must be the case for any such example where Assumption \ref{assum:Main} fails, we first consider the average of $N_{\textbf{0}}(t,\theta)$ in $\theta$.
\begin{prop} \label{prop:Dini-average}
Letting $N_{\etb}(t,\theta)$ be as in \emph{Definition \ref{defn:N-defn}}, define $\bar{M}_{\etb}(t)$ by $\displaystyle{\bar{M}_{\etb}(t) = \int_{\mathbb{S}^{n-2}}N_{\etb}(t,\theta) \ud \sigma_{n-2}}$. Then, there exists a constant $\bar{C}_*$, depending only on the Lipschitz constant of $f$ such that
\begin{align*}
\int_{0^+}^{1}\frac{\bar{M}_{\etb}(t)}{t^2} \ud t < \bar{C}_*.
\end{align*}
\end{prop}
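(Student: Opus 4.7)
The plan is to estimate the inner integrand along each radial slice using the fundamental theorem of calculus, and then integrate over the sphere. For each $\theta \in \mathbb{S}^{n-2}$, set $g_\theta(t) := f(t,\theta)$. Convexity of $f$ along the ray, together with $g_\theta(0) = 0$, implies by the standard secant argument that $g_\theta(t)/t$ is nondecreasing on $(0,1]$, equivalently $t g_\theta'(t) - g_\theta(t) \geq 0$ almost everywhere. Since the prefactor $(1 + |\nabla f|^2)^{-1/2}$ is bounded above by $1$, one has the pointwise bound $0 \leq N_{\etb}(t,\theta) \leq t\partial_t f(t,\theta) - f(t,\theta)$. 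With the integrand nonnegative, Fubini allows the order of integration to be swapped.

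The key observation is that
\[
\frac{tg_\theta'(t) - g_\theta(t)}{t^2} \;=\; \frac{d}{dt}\!\left(\frac{g_\theta(t)}{t}\right)
\]
almost everywhere. Because $g_\theta$ is Lipschitz (hence absolutely continuous), $g_\theta(t)/t$ is absolutely continuous on each $[\epsilon,1]$, and so
\[
\int_\epsilon^1 \frac{tg_\theta'(t) - g_\theta(t)}{t^2}\,dt = g_\theta(1) - \frac{g_\theta(\epsilon)}{\epsilon}.
\]
The quotient $g_\theta(t)/t$ is nonnegative (because $g_\theta \geq 0$), bounded above by the Lipschitz constant $L$ of $f$, and monotone nondecreasing, so it has a finite limit $g_\theta'(0^+) \in [0,L]$ as $t \to 0^+$. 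Passing to the limit $\epsilon \to 0^+$ (the integrand is nonnegative, so monotone convergence applies) yields
\[
\int_0^1 \frac{tg_\theta'(t) - g_\theta(t)}{t^2}\,dt = g_\theta(1) - g_\theta'(0^+) \leq L.
\]
Integrating over $\mathbb{S}^{n-2}$ then gives the desired bound $\bar{C}_* \leq L \cdot |\mathbb{S}^{n-2}|$, a constant depending only on the Lipschitz norm of $f$ and the dimension.

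There is essentially no difficult step; the entire argument reduces to a one-dimensional fundamental-theorem computation along each radial slice. The only subtle point is controlling the boundary behavior at $t = 0$, and both the boundedness and the existence of the limit of $g_\theta(t)/t$ at $t = 0^+$ rely on combining Lipschitz regularity with convexity together with the normalization $g_\theta(0) = 0$. Convexity enters twice: once to guarantee monotonicity of $g_\theta(t)/t$, and once to ensure the integrand is nonnegative. It is worth emphasizing that the argument integrates over $\theta$ rather than taking a supremum, which is precisely why this averaged estimate succeeds in every dimension, even though (as shown in Proposition \ref{prop:Dini3}) the pointwise supremum $M_{\etb}(t)$ can fail the Dini condition in dimensions $n \geq 3$.
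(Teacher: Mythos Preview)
Your proof is correct and in fact more direct than the paper's. Both arguments begin the same way: drop the prefactor $(1+|\nabla f|^2)^{-1/2}\le 1$, use Fubini, and reduce to a one-dimensional estimate along each ray. The difference is in how that one-dimensional estimate is carried out. The paper first sandwiches $t\,g_\theta'(t)-g_\theta(t)$ between dyadic differences (essentially $g_\theta(t)-2g_\theta(t/2)$ and $2(g_\theta(2t)-g_\theta(t))$), and then proves a separate dyadic lemma (Lemma~\ref{lem:1d}) bounding $\int_0^1 t^{-2}(g(t)-2g(t/2))\,dt\le 4C_g$ via a telescoping sum over scales $2^{-j}$. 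You bypass this entirely by observing that the integrand is an exact derivative,
\[
\frac{t\,g_\theta'(t)-g_\theta(t)}{t^2}=\frac{d}{dt}\!\left(\frac{g_\theta(t)}{t}\right),
\]
so the integral collapses to boundary terms. This is shorter, avoids the auxiliary lemma, and yields the sharper constant $L$ rather than $4L$. The paper's dyadic viewpoint, on the other hand, makes the monotonicity of $t\partial_t f-f$ explicit and foreshadows the dyadic machinery used elsewhere (e.g.\ in Definition~\ref{defn:dyadic}), but for this particular proposition your argument is the cleaner one.
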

\textit{Proof of Proposition \ref{prop:Dini-average}:}
We first note that since $t\pa_tf(t,\theta)-f(t,\theta)$ is increasing in $t$ (for any fixed $\theta$), we have the inequalities
\begin{align*}
0\leq \frac{t}{2} \int_{t/2}^{t}s^{-2}(s\pa_sf(s,\theta) - f(s,\theta)) \ud s \leq t\pa_tf(t,\theta)-f(t,\theta) \leq 4t \int_{t}^{2t}s^{-2}(s\pa_sf(s,\theta) - f(s,\theta)) \ud s.
\end{align*}
Computing the integrals on either side of these inequalities gives
\begin{align*}
\frac{1}{2}\left(f(t,\theta)-f(t/2,\theta)\right) \leq t\pa_tf(t,\theta)-f(t,\theta) \leq 2\left(f(2t,\theta)-f(t,\theta)\right).
\end{align*}
The proof of the proposition therefore follows from the lemma below by writing 
\begin{align*}
\int_{0^+}^{c}\frac{\bar{M}_{\textbf{0}}(t)}{t^2} \ud t  = \int_{\mathbb{S}^{n-2}} \left\{\int_{0^+}^{c} t^{-2}N_{\textbf{0}}(t,\theta) \ud t \right\} \ud \sigma_{n-2}.
\end{align*} 
\begin{lem} \label{lem:1d}
Let $g(t)\geq0$ be a convex function for $0 \leq t \leq 1$, with $g(0)=0$, Lipschitz constant $C_{g}$. Then, setting $h(t) = g(t) - 2g(t/2)$, we have
\begin{align*}
\int_{0^+}^{1} t^{-2}h(t) \ud t \leq 4C_{g}.
\end{align*}
\end{lem}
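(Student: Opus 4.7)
The plan is to reduce the integral to a telescoping difference via a dilation substitution, then estimate what is left using only $g(t)\le C_g t$. The one structural input I would use is convexity: since $g$ is convex with $g(0)=0$, the slope quotient $F(t):=g(t)/t$ is non-decreasing, so $F(t)\ge F(t/2)$, i.e., $g(t)\ge 2g(t/2)$, and therefore $h(t)\ge 0$. This guarantees the integral in the statement is well-defined in $[0,+\infty]$, so it suffices to exhibit a finite upper bound for the truncated integrals $\int_\epsilon^1 t^{-2}h(t)\,dt$ uniformly in $\epsilon\in(0,1)$ and then pass to the limit.

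For the main step I would substitute $u=t/2$ in the $g(t/2)$-piece:
\[
2\int_\epsilon^1 \frac{g(t/2)}{t^2}\,dt \;=\; \int_{\epsilon/2}^{1/2}\frac{g(u)}{u^2}\,du.
\]
Subtracting this from $\int_\epsilon^1 g(t)\,t^{-2}\,dt$ causes the common range $[\epsilon,1/2]$ to cancel, leaving
\[
\int_\epsilon^1 \frac{h(t)}{t^2}\,dt \;=\; \int_{1/2}^1\frac{g(t)}{t^2}\,dt \;-\; \int_{\epsilon/2}^\epsilon \frac{g(u)}{u^2}\,du.
\]
The second term on the right is non-negative since $g\ge 0$, and the first is bounded above by $C_g\int_{1/2}^1 dt/t = C_g\log 2$ using the Lipschitz bound $g(t)\le C_g t$. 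Letting $\epsilon\to 0^+$ (monotone convergence applies, since $h/t^2\ge 0$), I would conclude $\int_{0^+}^1 t^{-2}h(t)\,dt \le C_g\log 2 \le 4C_g$.

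There is no serious obstacle here; the main step is simply spotting the dilation substitution that makes the singular contributions at the origin cancel. A minor technical caution is that $g(t)/t^2$ need not be integrable near $0$ (e.g., if $g'(0^+)>0$), so I would work throughout with the truncated integral $\int_\epsilon^1$ and only take $\epsilon\to 0$ at the end, rather than trying to integrate the two pieces of $h$ separately. With that organization, the rest is direct and avoids any convergence issue at the origin.
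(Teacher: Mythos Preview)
Your proof is correct. It differs from the paper's argument in a pleasant way: the paper uses a dyadic telescoping identity $g(1)=\sum_{j=0}^N 2^{j}h(2^{-j})+2^{N+1}g(2^{-N-1})$ together with the monotonicity of $h$ (which follows from convexity) to bound $\int_{2^{-j-1}}^{2^{-j}} t^{-2}h(t)\,dt$ on each dyadic interval, arriving at the constant $4$. Your approach replaces the discrete telescoping by the continuous one coming from the dilation substitution $u=t/2$, so that the singular contributions near $0$ cancel exactly and only the fixed interval $[1/2,1]$ survives. This needs only $h\ge 0$ and $g(t)\le C_g t$, not the monotonicity of $h$, and yields the sharper constant $C_g\log 2$; the paper's dyadic route, on the other hand, makes the telescoping structure more visible and connects naturally with the dyadic decompositions used elsewhere in the paper.
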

\begin{proof}{Lemma \ref{lem:1d}}
For each $N\geq0$, we have
\begin{align*}
g(1) = h(1) + 2g(1/2) = h(1) + 2h(1/2) + 4g(1/4) = \sum_{j=0}^{N} 2^{j}h(2^{-j}) + 2^{N+1}g(2^{-N-1}).
\end{align*}
Moreover, by convexity $h(t)$ is nonnegative and an increasing function of $t$. Thus,
\begin{align*}
\int^{1}_{2^{-N-1}} t^{-2}h(t) \ud t = \sum_{j=0}^{N}\int_{2^{-j-1}}^{2^{-j}}t^{-2}h(t) \ud t \leq \sum_{j=0}^{N} 2^{-j}2^{2j+2}h(2^{-j}) \leq 4\left(g(1) -2^{N+1}g(2^{-N-1})\right) \leq 4C_{g}.
\end{align*}
Letting $N$ tend to infinity finishes the proof of the lemma. 
\end{proof}
We can also use Proposition \ref{prop:Dini-average} to show that Assumption \ref{assum:Main} is always satisfied for two dimensional convex domains, and hence prove Proposition \ref{prop:Dini2}: Since $\mathbb{S}^{n-2}$ only consists of two points when $n=2$, Proposition \ref{prop:Dini-average} show that the Dini condition in Assumption \ref{assum:Main} is satisfied at any point $\textbf{0}\in \pa\Omega$, with $\Omega \subset \R^2$ convex, and $\textbf{0}$ at the centre of the Lipschitz parameterization. To show that Assumption \ref{assum:Main} holds for a general point $x$ on the boundary, we proceed as follows. For $t\leq \tfrac{1}{2}|x|$, $\pa\Omega\cap B_t(x)$ can be written as the graph of a convex function, with the Lipschitz parameterization centred at $x$, and so by the discussion above
\begin{align*}
\int_{0^{+}}^{\tfrac{1}{2}|x|} \frac{M_{x}(t)}{t^2}\ud t \leq C.
\end{align*}
Since $\nu(y)\cdot(y-x) -\nu(y)\cdot y \leq |x|$, we have $M_{x}(t) \leq M_{\textbf{0}}(\tfrac{3}{2}t) + |x|$  for $t\geq\tfrac{1}{2}|x|$. Moreover, $t^{-2}|x|$ is integrable for $t\geq\tfrac{1}{2}|x|$, and so by our estimates on $M_{\textbf{0}}(t)$, the function $t^{-2}M_{x}(t)$ is integrable. Thus, Assumption \ref{assum:Main} always holds in two dimensions, and this proves Proposition \ref{prop:Dini2}.
\begin{rem} \label{rem:Dini-average}
Recall  the  spherical slices $V_t$ of $\Omega$ from \emph{Definition} \ref{defn:V}. The estimate on $\bar{M}_{\etb}(t)$ in \emph{Proposition \ref{prop:Dini-average}} ensures that
\begin{align*}
\int_{0^+}^{1} \frac{\emph{Area}_{\mathbb{S}^{n-1}}(V_{t/2}\backslash V_t)}{t^2} \ud t <\infty.
\end{align*}
However, when the limiting slice $V_0$ contains antipodal points, the convex hull of $V_{t}$ in $\mathbb{S}^{n-1}$ is not necessarily contained in $V_{\alpha t}$ for any fixed $0<\alpha<1$. Therefore, the estimate in \emph{Proposition} \ref{prop:Dini-average} is not sufficient to prove \emph{Proposition} \ref{prop:convex-approx} concerning the existence of a geodesically convex set $W_{t}\subset\mS^{n-1}$ which is sufficiently close in area to the spherical slice $V_{t}$.
\end{rem}
We end this section by proving Proposition \ref{prop:Dini3} by exhibiting the example of a convex domain in dimensions three and higher for which the Dini condition, Assumption \ref{assum:Main}, does not hold. We just consider focus on the three dimensional case, since for higher dimensions we can take the cartesian product of the three dimensional domain with copies of $\mathbb{R}$. We will prove the following.
\begin{prop} \label{prop:Dini4}
Let $C_1^*\geq0$ be given, and let $(t,\theta)$ be polar coordinates centred at $(0,0)\in\R^2$. Then,  there exists a bounded, convex function $f(t,\theta)$ on $B_{1/2}(\etb)\subset\R^2$, with $f(0,0) =0$, $f(t,\theta) \geq 0$, and with the following properties:
\begin{enumerate}
\item[i)] The limit $\lim_{t\to0} \frac{f(t,\theta)}{t} = C_1^*$ holds for all $\theta$.
\item[ii)] There exists an absolute constant $c^*>0$ such that for all $t\in(0,\tfrac{1}{2})$, we have
\begin{align*}
c^* \leq \frac{\sup_{\theta}N(t,\theta)}{\frac{t}{\log(t^{-1})}},
\end{align*}
where $N(t,\theta) =t\pa_tf(t,\theta) - f(t,\theta)$. In particular, 
\begin{align*}
\int_{\delta}^{1/2} \frac{\sup_{\theta}N(t,\theta)}{t^2} \ud t
\end{align*}
tends to $\infty$ as $\delta$ tends to $0$.
\end{enumerate}
\end{prop}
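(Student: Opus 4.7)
The plan is to construct $f$ as the upper envelope of the base cone $C_1^{*}|x|$ together with a countable family of tangent affine planes $\ell_j$, each producing a single "bump" at the dyadic scale $t_j = 2^{-j}$ in a direction $\theta_j \in S^1$ that rotates with $j$. The bumps will provide the required lower bound $\sup_\theta N(t,\theta) \gtrsim t/\log(1/t)$ at every scale, while the heights of the bumps decay like $1/j$, which will force the pointwise limit $f(r,\theta)/r \to C_1^{*}$ in every fixed direction.

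Concretely, first set $A_j = C_1^{*} + c^{*}/j$, $B_j = c^{*}t_j/j$, and
\[
\ell_j(x) = A_j(x\cdot\theta_j) - B_j, \qquad f(x) = \max\!\Bigl(C_1^{*}|x|,\ \sup_{j\ge 1}\ell_j(x)\Bigr).
\]
Then $f$ is convex (as a supremum of convex functions), nonnegative, and satisfies $f(0) = 0$ since $\ell_j(0) = -B_j < 0$. Along the ray $r\theta_j$, the plane $\ell_j$ meets the cone at $r = t_j$ and strictly exceeds it for $r > t_j$; in other directions $\ell_j$ dominates the cone only inside a wedge about $\theta_j$ of angular half-width $\sim 1/\sqrt{j}$. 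Assuming $\ell_j$ is the dominant piece on $[t_j, 2t_j]\times\{\theta_j\}$, a direct computation gives $f(r,\theta_j) = A_j r - B_j$, so that
\[
N(r,\theta_j) = r\partial_r f - f = A_j r - (A_j r - B_j) = B_j = c^{*}t_j/j \;\gtrsim\; r/\log(1/r),
\]
and since the dyadic intervals $[t_j, 2t_j]$ cover $(0,1/2]$ this proves condition (ii). Condition (i) then follows almost automatically: any $\ell_j$ can contribute to $f$ at $(r,\theta)$ only when $t_j \le r$ (else its value at $(r,\theta)$ lies strictly below the cone), and in that case its excess over the cone is at most $(A_j - C_1^{*})r = (c^{*}/j)r$, so
\[
\frac{f(r,\theta)}{r} - C_1^{*} \;\le\; \sup_{j:\, t_j \le r}\frac{c^{*}}{j} \;\le\; \frac{c^{*}}{\log_2(1/r)} \to 0 \quad\text{as } r \to 0,
\]
uniformly in $\theta$.

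The main obstacle is the standing assumption that $\ell_j$ is really the dominant plane on $[t_j, 2t_j]\times\{\theta_j\}$. A competing plane $\ell_k$ interferes exactly when its activation wedge contains $\theta_j$, i.e.\ when $|\theta_j - \theta_k| \lesssim 1/\sqrt{k}$, and a direct comparison shows that $\ell_k$ can actually overtake $\ell_j$ on this slab only if $k$ lies in the critical range $j < k < 2j$. The construction therefore reduces to choosing a sequence $\{\theta_j\}\subset S^1$ with the geometric separation $|\theta_j - \theta_k| \gtrsim 1/\sqrt{\min(j,k)}$ whenever $|j-k| \le \min(j,k)$. This is feasible — for instance by taking $\theta_j = \alpha\sqrt{j} \pmod{2\pi}$ with a suitable irrational $\alpha$, so that consecutive angles differ by $\sim 1/\sqrt{j}$, or by using a van der Corput-type low-discrepancy sequence — and verifying that the chosen sequence really enforces dominance at every scale without any pathological accumulation of interference is the delicate combinatorial heart of the construction, which is what the statement calls "intricate". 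For dimensions $n \ge 4$, one proceeds either by the same construction with $\theta_j \in S^{n-2}$ (where the extra freedom simplifies the separation problem) or by taking a Cartesian product with $\mathbb{R}^{n-3}$, as noted in the statement of the proposition.
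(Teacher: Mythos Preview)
Your approach---building $f$ as a supremum of affine planes---is natural, but the argument collapses at the separation step.  The condition you isolate, that $|\theta_j-\theta_k|\gtrsim 1/\sqrt{\min(j,k)}$ whenever $|j-k|\le\min(j,k)$, is \emph{infeasible} on the circle.  Indeed, for any large $J$ the indices $j\in[J,2J]$ are pairwise within $J\le\min(j,k)$ of one another, so your condition forces the $J+1$ angles $\{\theta_j:J\le j\le 2J\}$ to be pairwise separated by at least $c/\sqrt{2J}$.  That requires total arclength $\ge (J+1)\cdot c/\sqrt{2J}\sim c\sqrt{J}$, which exceeds $2\pi$ once $J$ is large.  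Your candidate $\theta_j=\alpha\sqrt{j}\pmod{2\pi}$ illustrates the problem: consecutive angles are $\sim\alpha/(2\sqrt{J})$ apart, so the $J$ points in $[J,2J]$ wrap around the circle roughly $\sqrt{J}$ times and inevitably many pairs coincide modulo $2\pi$.  Since the dominant competing plane at scale $t_j$ can then be some $\ell_k$ with $k\approx j+\log_2 j$, you get $N\approx B_k\sim t_j/j^2$ rather than the target $t_j/j$.  (When $C_1^*=0$ the situation is worse still: the activation region of each $\ell_k$ is a half-plane, not a wedge of width $1/\sqrt k$, so your interference analysis does not apply at all.)

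The paper avoids this obstruction by abandoning the dyadic, max-of-planes picture entirely.  It constructs $N(t,\theta)$ \emph{directly} and recovers $f$ by integrating $t^{-2}N$, proving convexity via a Hessian determinant computation rather than by taking suprema.  The key structural choice is to use super-exponentially spaced scales $t_k=e^{-\lfloor k^{\alpha_1}\rfloor}$ with $\alpha_1-1\in(\tfrac17,\tfrac16)$; each interval $[t_k,t_{k-1}]$ then has length $e^{O(k^{\alpha_1-1})}$ and is subdivided into only $\theta_k^{-1}\sim k^{\alpha_1-1}$ angular sectors.  Because $k^{\alpha_1-1}\ll k$, the angular bookkeeping becomes feasible, but the price is that one must verify by hand the differential inequalities (Property~4 in the paper) that force the Hessian of $f$ to be positive definite.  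Your strategy could conceivably be repaired by moving to such sparse scales, but as written the combinatorial core is impossible, not merely delicate.
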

Letting $\Omega$ be the three dimensional convex domain consisting of the graph of $f$, and using the definition of $N_{\textbf{0}}(t,\theta)$ from Definition \ref{defn:N-defn} and its relation to $M_{\textbf{0}}(t,\theta)$ in \eqref{eqn:N-ineq}, we see that Proposition \ref{prop:Dini4} implies the failure of the Dini condition in Proposition \ref{prop:Dini3}.
\begin{proof}{Proposition \ref{prop:Dini4}}
To construct the function $f$, we will split the range of $t$ into a union of intervals $[t_{k},t_{k-1}]$ for an appropriately chosen sequence $t_{k}$ tending to $0$. For $t$ in each of these intervals, we also break up $\theta\in[0,2\pi]$ into intervals $I_{j,k}$, and associate values $t_{j,k}\in[t_{k},t_{k-1}]$, $\theta_{j,k}\in I_{j,k}$ to each of these intervals. We then define  $f$ so that for all $j$ and $k$,
\begin{align*}
N(t_{j,k},\theta_{j,k}) \geq c^* \frac{t_{j,k}}{\log(t_{j,k}^{-1})},
\end{align*}
while ensuring that $f$ is convex.  Let us first set some notation. 
\begin{defn} \label{defn:Main}
Fix $\alpha_1$ such that $\frac{1}{7}< \alpha_1 - 1 < \frac{1}{6}$, and a large integer $M_1$ (to be specified below). For each $k\geq1$, define $t_k$, $\delta_k$ and $\theta_k$ via
\begin{align*} 
t_k= e^{-\lf k^{\alpha_1}\rf}, \qquad \delta_k = \left(\log(t_k^{-1})\right)^{-1} = \left(\lf k^{\alpha_1}\rf\right)^{-1}, \qquad e^{1/\theta_k} = \frac{t_{k-1}}{t_k}e^{-M_1}. 
\end{align*}
\end{defn}
This definition ensures that $\theta_{k}^{-1}$ is an integer and that  $k^{\alpha_1-1}\theta_{k}$ is uniformly bounded above and away from $0$ (by constants independent of $M_1$ for $k$ sufficiently large). 
\begin{claim} \label{claim:Main}
To prove \emph{Proposition \ref{prop:Dini4}} it is sufficient to construct a function $N(t,\theta)$ with the following properties for all $k\geq K$, for some constant $K$:
\begin{pro} \label{prop:1}
$N(t_k,\theta) = t_k \delta_k, \qquad  N(t_{k-1},\theta) = t_{k-1}\delta_{k-1}$.
\end{pro}
\begin{pro} \label{prop:2}
$\sup_{\theta}N(t,\theta) \geq \tilde{c}t \delta_{k}$ for all $t_k \leq t \leq t_{k-1}$, for a constant $\tilde{c}>0$.
\end{pro}
\begin{pro} \label{prop:3}
$\int_{t_{k}}^{t_{k-1}} \frac{N(t,\theta)}{t^2} \ud t = \tilde{C}\delta_k$, for a constant $\tilde{C}$ bounded above and away from $0$.
\end{pro}
\begin{pro} \label{prop:4}
The bounds
\begin{align*}
\pa_{t}N(t,\theta) \geq c_1\frac{N(t,\theta)^2}{\theta_k^6t^2}, \qquad \left|\pa_{\theta}N(t,\theta)\right| \leq C_1\frac{N(t,\theta)}{\theta_k^2}, \qquad \left|\pa_{\theta}^2N(t,\theta)\right| \leq C_1\frac{N(t,\theta)}{\theta_k^5}
\end{align*} 
hold, for constants $c_1$, $C_1$, for all $t_{k} \leq t \leq t_{k-1}$, $0 \leq \theta \leq 2\pi$.
\end{pro}
Here $\tilde{c}$, $\tilde{C}$, $c_1$, $C_1$ etc., denote constants that are independent of $k$. 
\end{claim}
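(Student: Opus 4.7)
The plan is to define $f(t,\theta)$ from $N(t,\theta)$ by solving the linear ODE $t\,\partial_t f - f = N$ in $t$, normalized by $\lim_{t\to 0^+} f(t,\theta)/t = C_1^*$. Since $\partial_t(f/t) = N/t^2$, the unique such solution is
\begin{align*}
f(t,\theta) = C_1^* t + t\int_0^t \frac{N(s,\theta)}{s^2}\,ds.
\end{align*}
The integrand is integrable near $s=0$: by Property~\ref{prop:3}, $\int_{t_k}^{t_{k-1}} N(s,\theta)/s^2\,ds = \tilde C\delta_k$, and since $\delta_k \approx k^{-\alpha_1}$ with $\alpha_1 > 1$, the tail $\sum_{j\ge k}\tilde C\delta_j$ vanishes as $k\to\infty$. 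This simultaneously yields $f(0,\theta)=0$, conclusion (i) that $f/t\to C_1^*$, and boundedness of $f$ on $B_{1/2}(\etb)$. Property~\ref{prop:1} ensures consistency of this formula with the prescribed boundary values at $t=t_k$.

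Conclusion (ii) follows from $N = t\partial_t f - f$. On $[t_k,t_{k-1}]$, Property~\ref{prop:2} gives $\sup_\theta N(t,\theta) \ge \tilde c\, t\,\delta_k$. Since $\log(t^{-1}) \le \log(t_k^{-1}) = \lfloor k^{\alpha_1}\rfloor = 1/\delta_k$ on this interval, this transfers to the pointwise lower bound $\sup_\theta N(t,\theta) \ge c^*\, t/\log(t^{-1})$. Integrating,
\begin{align*}
\int_{t_k}^{t_{k-1}}\frac{\sup_\theta N(t,\theta)}{t^2}\,dt \ge \tilde c\,\delta_k\log(t_{k-1}/t_k) = \tilde c\,\delta_k\bigl(\lfloor k^{\alpha_1}\rfloor - \lfloor (k-1)^{\alpha_1}\rfloor\bigr),
\end{align*}
which is comparable to $\alpha_1/k$ by the mean value theorem, so summing over $k$ yields divergence as $\delta\to 0$.

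The main obstacle is verifying that the resulting $f$ is \emph{convex} on $B_{1/2}(\etb)\subset\R^2$. In the orthonormal polar frame $\{\partial_r,\,r^{-1}\partial_\theta\}$, the Hessian of $f$ has entries $H_{rr} = \partial_t N/t$, $H_{\theta\theta} = (f_{\theta\theta}+tf_t)/t^2$, and $H_{r\theta} = N_\theta/t^2$. The first inequality of Property~\ref{prop:4} immediately gives $H_{rr}\ge 0$. The non-negative quantity $tf_t = C_1^* t + tG + N \ge 0$, where $G(t,\theta) = \int_0^t N/s^2\,ds \ge 0$, provides the main contribution to $H_{\theta\theta}$, while the correction $f_{\theta\theta} = t\int_0^t N_{\theta\theta}/s^2\,ds$ is controlled by $|\partial_\theta^2 N| \le C_1 N/\theta_k^5$ combined with Property~\ref{prop:3} summed dyadically. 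The remaining determinant inequality $H_{rr}H_{\theta\theta} \ge H_{r\theta}^2$ reduces to
\begin{align*}
\partial_t N\cdot(f_{\theta\theta}+tf_t) \ge N_\theta^2/t,
\end{align*}
and the $\theta_k$-exponents $6,\,2,\,5$ in Property~\ref{prop:4} are calibrated precisely so that this inequality holds pointwise. The calibration $\tfrac{1}{7} < \alpha_1 - 1 < \tfrac{1}{6}$ in Definition~\ref{defn:Main} enters here as a compatibility constraint between the bump height $\sim\delta_k$ and angular width $\sim\theta_k$, governing the trade-off between the lower bound on $\partial_t N$ and the upper bound on $N_\theta^2$ needed to close the determinant inequality.
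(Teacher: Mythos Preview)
Your approach is the same as the paper's: define $f$ via $f(t,\theta)/t = C_1^* + \int_0^t N(s,\theta)/s^2\,ds$, deduce (i) and (ii) from Properties~\ref{prop:2} and~\ref{prop:3}, and verify convexity through the polar Hessian. The first two steps are carried out correctly and match the paper line by line.

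The gap is in the convexity verification, which you leave as an assertion. You record only $tf_t \ge 0$, but that is not enough to show $H_{\theta\theta}>0$ or to close the determinant. The paper extracts the quantitative lower bound
\[
\partial_t f(t,\theta)\;\ge\; C_1^* + \tilde C\sum_{j\ge k}\delta_j \;\ge\; c\,\theta_k
\qquad (t_k\le t\le t_{k-1}),
\]
using that $\sum_{j\ge k}\delta_j\sim k^{1-\alpha_1}\sim\theta_k$. Combined with $|\partial_\theta^2 f|\le C_1\tilde C\, t\,\theta_k^{-5}\delta_k$ (from Property~\ref{prop:4} and Property~\ref{prop:3}), the condition $\alpha_1-1<\tfrac16$ ensures $\delta_k\theta_k^{-6}$ is bounded, hence $t\partial_t f+\partial_\theta^2 f\ge c^*\theta_k t>0$. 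The determinant then reduces to
\[
c^*\theta_k\,\partial_t N - \frac{(\partial_\theta N)^2}{t^2}
\;\ge\;\Bigl(\frac{c^*c_1}{\theta_k^{5}}-\frac{C_1^2}{\theta_k^{4}}\Bigr)\frac{N^2}{t^2},
\]
which is positive once $\theta_k$ is small, i.e.\ for $k\ge K$. Your final paragraph gestures at this ``calibration'' but does not supply the crucial lower bound $\partial_t f\ge c\theta_k$ or the exponent arithmetic that makes the two comparisons close; without these, the argument is incomplete.
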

\begin{proof}{Claim \ref{claim:Main}}
By Property \ref{prop:2}), we have the desired lower bound
\begin{align*}
\sup_{\theta}N(t,\theta) \geq \tilde{c} t \delta_{k} \geq c^* t\left(\log(t^{-1})\right)^{-1},
\end{align*} 
for $0<t\leq t_{K-1}$. Since we can rescale any convex function on $t \leq t_{K-1}$ to the ball $B_{1/2}$, it is therefore sufficient to show that Properties \ref{prop:1}) to \ref{prop:4}) allow us to define a convex function $f(t,\theta)$ for $0 < t \leq t_{K-1}$. For a given $C_1^*\geq0$, we define $f(t,\theta)$ by
\begin{align} \label{eqn:fdef}
\frac{f(t,\theta)}{t} = C_1^* + \int_{0}^{t} \frac{N(\tau,\theta)}{\tau^2} \ud \tau,
\end{align}
This integral converges for all $\theta$ by Property \ref{prop:3}) above (since $\delta_k= \left(\lf k^{\alpha_1}\rf\right)^{-1}$, with $\alpha_1>1$), and so in particular $f$ is bounded, and satisfies
\begin{align*}
\lim_{t\to0}\frac{f(t,\theta)}{t} = C_1^*.
\end{align*} 
Let us check that this definition gives a convex function:  By Property \ref{prop:3}), for $t_k \leq t \leq t_{k-1}$, we have
\begin{align} \label{eqn:fdef1}
\pa_{t}f(t,\theta) \geq C_1^* + \int_{0}^{t_k} \frac{N(\tau,\theta)}{\tau^2} \ud \tau  \geq C_1^* +\tilde{C} \sum_{j\geq k}\delta_k \geq c\theta_k,
\end{align}
for any $C_1^*\geq0$. In polar coordinates the Hessian of $f$ is written as
$$
\begin{pmatrix}
\pa_{t}^2f & \pa_{\theta}\pa_tf - t^{-1}\pa_{\theta} f \\
 \pa_{\theta}\pa_tf - t^{-1}\pa_{\theta} f & t\pa_t f + \pa_{\theta}^2 f
\end{pmatrix} = \begin{pmatrix}
t^{-1}\pa_{t}N & t^{-1}\pa_{\theta}N \\
t^{-1}\pa_{\theta}N & t\pa_t f + \pa_{\theta}^2 f
\end{pmatrix} .
$$ 
By Properties \ref{prop:3}) and \ref{prop:4}), we have
\begin{align*}
\left| \pa_{\theta}^2f(t,\theta)\right| \leq  t\int_{0}^{t} \frac{\left|\pa^2_{\theta}N(\tau,\theta)\right|}{\tau^2} \ud \tau \leq C_1\theta_{k}^{-5}t\int_{0}^{t} \frac{N(\tau,\theta)}{\tau^2} \ud \tau \leq C_1\theta_{k}^{-5}t\cdot\tilde{C}\delta_k = C_1\tilde{C}t\theta_{k}^{-5}\delta_{k}.
\end{align*}
In particular, combining this with \eqref{eqn:fdef1} and using $\alpha_1-1< \tfrac{1}{6}$, we have $t\pa_t f + \pa_{\theta}^2 f \geq c^*\theta_{k}t>0$ for $t_k \leq t \leq t_{k-1}$, for some absolute constant $c^*$, and $k\geq K$, with $K$ sufficiently large. Thus,   the determinant of the Hessian is bounded below by
\begin{align*}
c^*\theta_k\pa_{t}N(t,\theta) - \frac{\left(\pa_{\theta}N(t,\theta)\right)^2}{t^2}.
\end{align*}
For $k\geq K$ sufficiently large, and using Property \ref{prop:4}), this determinant can be made positive, ensuring that $f(t,\theta)$ is a convex function. This therefore concludes the proof of the claim. 
\end{proof}
Let us briefly summarise the construction of $N(t,\theta)$:
\begin{defn} \label{defn:interval}
Break up $[0,\pi]$ into $\theta_k^{-1}$ equal intervals $I_{j,k} = [j\pi \theta_k,(j+1)\pi\theta_k]$, $0 \leq j \leq \theta_k^{-1}-1$, and define $\theta_{j,k}^{\pm} =  j\pi \theta_k \pm \theta_k^{3}$. 
\end{defn}
We construct $N(t,\theta)$  by first defining $N(t,\theta)$ for $\theta =\theta_{j,k}^{\pm}$, and $t_k \leq t \leq t_{k-1}e^{-M_1}$. We define $N(t,\theta)$ for these angles and this range of $t$ so that $N(t,\theta_{j,k}^{+}) = N(t,\theta_{j,k}^{-}),$ and such that Properties \ref{prop:1}) and  \ref{prop:2}), together with the first inequality in Property \ref{prop:4}) hold. We then interpolate between these values of $\theta$ so that the final two inequalities in Property \ref{prop:4}) hold, together with $N(t,\theta)$ being able to be extended evenly across the $x$-axis. Moreover, the definition will ensure that
\begin{align*}
\int_{t_k}^{t_{k-1}e^{-M_1}} \frac{N(t,\theta)}{t^2} \ud t = F_{k}(\theta)\delta_k,
\end{align*}
for a function $F_{k}(\theta)$, satisfying
\begin{align*}
C_2^{-1} \leq F_{k}(\theta) \leq C_2, \qquad \left|\pa_{\theta}F_{k}(\theta)\right| \leq C_2\theta_{k}^{-2}, \qquad \left|\pa_{\theta}^2F_{k}(\theta)\right| \leq C_2\theta_{k}^{-5}. 
\end{align*}
The constants above will be independent of $M_1$, and so this will then allow us to choose $M_1$ and  define $N(t,\theta)$ for $0 \leq \theta \leq 2\pi$, $t_{k-1}e^{-M_1} \leq t \leq t_{k-1}$ in such a way that Property \ref{prop:3}) is now satisfied, together with Properties \ref{prop:1}), \ref{prop:2}), and \ref{prop:4}) continuing  to hold. In the rest of this section we describe this construction of $N(t,\theta)$ in detail. 
\\
\\
\textbf{Definition of $N(t,\theta)$ for $\theta = \theta_{j,k}^{\pm}$}: 
\\
\\
We first define $N(t,\theta)$ for $t_k \leq t \leq t_{k-1}e^{-M_1}$ and $\theta = \theta_{j,k}^{\pm}$. Roughly speaking, this definition ensures that $N(t,\theta)$ stays comparable to $t_k\delta_{k}$ until $t$ reaches a certain scale $t_{j,k}$ (to be defined below), where it increases to $t_{j,k}\delta_{k}$. It then stays comparable to $t_{j,k}$, until $t$ reaches $t_{k-1}e^{-M_1}$ where it increases to $t_{k-1}e^{-M_1}\delta_{k-1}$.
\begin{defn} \label{defn:tjk}
Set 
\begin{align*}
t_{j,k} = t_{k}e^{j}, \qquad t_{j,k}^{(1)} = \max\left\{t_{k},t_{k}^{1/2}t_{k,j}^{1/2}\delta_{k}^{1/2}\theta_{k}^{-3}\right\}, \qquad t_{j,k}^{(2)} = \min\left\{t_{j,k}\theta_k^{-1/2},t_{k-1}e^{-M_1-1}\right\}
\end{align*}
\end{defn}
\begin{defn} \label{defn:N1}
We define $N(t,\theta)$ for $\theta = \theta^{\pm}_{j,k}$, and $t_k \leq t \leq t_{k-1}e^{-M_1}$ by
$$
N(t,\theta^{\pm}_{j,k}) = 
\begin{cases}
 t_k\delta_{k}\left(1+\frac{\delta_{k}}{\theta_k^{6}t}(t-t_k)\right), & \quad  t_k \leq  t \leq  t_{j,k}^{(1)} \\
 N(t_{j,k}^{(1)},\theta^{\pm}_{j,k}) + \frac{t_k\delta_k}{t_{j,k}}(t-t_{j,k}^{(1)}), & \quad  t_{j,k}^{(1)} \leq t \leq t_{j,k} \\
 N(t_{j,k},\theta^{\pm}_{j,k})e^{(t-t_{j,k})/(\alpha_{j,k}t_{j,k})}, & \quad  t_{j,k} \leq t \leq et_{j,k} \\
 \delta_k\theta_k^{1/2}t -\beta_{j,k}^{(1)}\left(\frac{t^2}{2} - et_{j,k}t\right) + \beta_{j,k}^{(2)}, & \quad  et_{j,k} \leq t \leq t_{j,k}^{(2)}\\
  N(t_{j,k}^{(2)},\theta^{\pm}_{j,k})\left(1+\frac{\delta_k}{\theta_k^6t}(t-t_{j,k}^{(2)})\right), & \quad  t_{j,k}^{(2)} \leq t \leq t_{k-1}e^{-M_1-1}\\
 N(t_{k-1}e^{-M_1-1},\theta^{\pm}_{j,k}) + \gamma_{j,k}(t-t_{k-1}e^{-M_1-1}), & \quad  t_{k-1}e^{-M_1-1} \leq t \leq t_{k-1}e^{-M_1}.
\end{cases}
$$
\end{defn}
In the above we have set
\begin{align*} 
\alpha_{j,k} & = (e-1)\left(\log\left(\frac{t_{j,k}\delta_ke}{N(t_{j,k},\theta_{j,k}^{\pm})}\right)\right)^{-1} \\ 
\beta_{j,k}^{(1)} & = (t_{j,k}^{(2)} - t_{j,k}e)^{-1}\left(\delta_{k}\theta_{k}^{1/2} - \frac{\delta^2_{k}t_{j,k}}{\theta_{k}^6t_{j,k}^{(2)}}\right) \\ 
\beta_{j,k}^{(2)} & = t_{j,k}\delta_ke -  \delta_k\theta_k^{1/2}t_{j,k}e + \beta_{j,k}^{(1)}e^2t_{j,k}^2  \\ \gamma_{j,k} & = \delta_{k-1}\frac{e}{e-1} - \frac{e^{M_1+1}}{t_{k-1}(e-1)}N(t_{k-1}e^{-M_1-1},\theta_{j,k}^{\pm}). 
\end{align*}
Note that the choice of $\alpha_{j,k}$, $\beta_{j,k}^{(i)}$ and $\gamma_{j,k}$ ensures that $N(t_{j,k}e,\theta^{\pm}_{j,k}) = t_{j,k}\delta_ke$, as well as $\pa_{t}N(t_{j,k}^{(2)}, \theta_{j,k}^{\pm})$ being comparable from both sides to $\frac{\delta^2_{k}t_{j,k}}{\theta_{k}^6t_{j,k}^{(2)}}$, the value of $N(t_{j,k}^{(2)},\theta_{j,k}^{\pm})$ is comparable to $\delta_kt_{j,k}$, and $N(t_{k-1}e^{-M_1},\theta^{\pm}_{j,k}) = e^{-M_1}t_{k-1}\delta_{k-1}$. Moreover, it ensures that $N(t,\theta)$ is continuous in $t$.
\\
\\
By definition, we have
\begin{align*}
N(t_{k},\theta^{\pm}_{j,k}) = t_{k}\delta_{k}, \qquad N(et_{j,k},\theta^{\pm}_{j,k}) = et_{j,k}\delta_{k}, \qquad
C^{-1}\delta_k \leq \int_{t_k}^{t_{k-1}e^{-M_1}} \frac{N(t,\theta^{\pm}_{j,k})}{t^2} \ud t  \leq C\delta_{k},
\end{align*}
for a constant $C$  independent of $j$, $k$ and $M_1$. By computing $\pa_tN(t,\theta_{j,k}^{\pm})$ and using $\alpha_1-1<\tfrac{1}{6}$ (ensuring that $\delta_k\theta_{k}^{-7}$ is bounded by a constant independent of $k$), the first inequality in Property \ref{prop:4}) also holds for these values of $\theta$ and range of $t$. The $t$-values $t_{j,k}^{(1)}$ and $t_{j,k}^{(2)}$ are introduced in Definition \ref{defn:N1} in order to decrease the difference in the magnitude of $\pa_tN(t,\theta^{\pm}_{j,k})$ between $t\in [t_{j,k},et_{j,k}]$ and nearby values of $t$, while still ensuring that Property \ref{prop:4}) holds. This is important to ensure that Property \ref{prop:4}) continues to hold when we interpolate in $\theta$ in Definition \ref{defn:N3} below. 
\\
\\
\textbf{Definition of $N(t,\theta)$ for $\theta_{j,k}^{+} \leq \theta \leq \theta_{j+1,k}^{-}$}:
\\
\\
To define $N(t,\theta)$ for $t_{k} \leq t \leq t_{k-1}e^{-M_1}$, $\theta_{j,k}^{+} \leq \theta \leq \theta_{j+1,k}^{-}$ we interpolate  between the values of $N(t,\theta_{j,k}^{+})$ and $N(t,\theta_{j+1,k}^{-})$:  For each $\theta$ in this range, define $a_{j,k} = a_{j,k}(\theta)$ so that
\begin{align} \label{eqn:ajk}
\theta = (1-a_{j,k})\theta_{j,k}^{+} + a_{j,k}\theta_{j+1,k}^{-1}, \qquad a_{j,k} = \frac{\theta - \theta_{j,k}^{+}}{\theta_{j+1,k}^{-} - \theta_{j,k}^{+}}. 
\end{align}
This ensures that 
\begin{align} \label{eqn:ajkderiv}
C^{-1}\theta_k^{-1} \leq\pa_{\theta} a_{j,k}(\theta) \leq C\theta_k^{-1}.
\end{align}
uniformly in $j$ and $k$. 
\begin{defn} \label{defn:N2}
For  $t_{k} \leq t \leq t_{k-1}e^{-M_1}$, $\theta_{j,k}^{+} \leq \theta \leq \theta_{j+1,k}^{-}$, we define $N(t,\theta)$ by
\begin{align*}
N(t,\theta) = N(t,\theta_{j,k}^{+})^{1-a_{j,k}}N(t,\theta_{j+1,k}^{-})^{a_{j,k}} = N(t,\theta_{j,k}^{+} )\exp\left(a_{j,k} \log\left(\frac{N(t,\theta_{j+1,k}^{-})}{N(t,\theta_{j,k}^{+})}\right)\right). 
\end{align*}
\end{defn}
Since $N(t,\theta)$ interpolates between $N(t,\theta_{j,k}^{+})$ and $N(t,\theta_{j+1,k}^{-})$, by Definition \ref{defn:N1}, we immediately see that for this range of $t$ and $\theta$ we have
\begin{align*}
N(t_k,\theta) = t_k\delta_k, \qquad  N(t_{k-1}e^{-M_1},\theta) = e^{-M_1}t_{k-1}\delta_{k-1}, \qquad 
C^{-1} \delta_k \leq \int_{t_k}^{t_{k-1}e^{-M_1}} \frac{N(t,\theta)}{t^2} \ud t  \leq C\delta_k.
\end{align*}
It also ensures the bounds
\begin{align*}
\left|\pa_{\theta}N(t,\theta)\right| & =  \left|\log\left(\frac{N(t,\theta_{j+1,k}^{-})}{N(t,\theta_{j,k}^{+})}\right)\pa_{\theta}a_{j,k}N(t,\theta)\right| \leq C\theta_k^{-1}\left|\log\left(t_{k-1}/t_k\right)\right|N(t,\theta) \leq C\theta_k^{-2}N(t,\theta), \\
\left|\pa_\theta^2N(t,\theta)\right|  & \leq C\theta_k^{-4}N(t,\theta).
\end{align*} 
Combining this with computing a lower bound for $\pa_tN(t,\theta)$ in terms of  $\pa_{t}N(t,\theta_{j,k}^{+})$ and $\pa_{t}N(t,\theta_{j+1,k}^{-})$, ensures that Property \ref{prop:4}) holds for $t_{k} \leq t \leq t_{k-1}e^{-M_1}$, $\theta_{j,k}^{+} \leq \theta \leq \theta_{j+1,k}^{-}$.
\\
\\
\textbf{Definition of $N(t,\theta)$ for $\theta_{j,k}^{-} \leq \theta \leq \theta_{j,k}^{+}$}:
\\
\\
We now define $N(t,\theta)$ for $t_{k} \leq t \leq t_{k-1}e^{-M_1}$ and $\theta$ in the interval $\theta_{j,k}^{-} \leq \theta \leq \theta_{j,k}^{+}$ of length $2\theta_k^3$. 
\begin{defn} \label{defn:N3}
For each $j$, $1 \leq j \leq \theta_k^{-1}-1$, and $\theta$  in the interval $\theta_{j,k}^{-} \leq \theta \leq \theta_{j,k}^{+}$, we make the definition 
\begin{align*}
N(t,\theta) = A_{j,k}(t) (\theta-\theta_{j,k}^{-})^3 + B_{j,k}(t) (\theta-\theta_{j,k}^{-})^2 + C_{j,k}(t)(\theta-\theta_{j,k}^{-}) + N(t,\theta_{j,k}^{-}),
\end{align*}
with 
\begin{align*}
(2\theta_k^3)^2 A_{j,k}(t) & =  \pa_{\theta}N(t,\theta_{j,k}^{-}) + \pa_{\theta}N(t,\theta_{j,k}^{+}),\\ 2\theta_k^3B_{j,k}(t)&   = -2 \pa_{\theta}N(t,\theta_{j,k}^{-})- \pa_{\theta}N(t,\theta_{j,k}^{+}), \\ C_{j,k}(t)  & = \pa_{\theta}N(t,\theta_{j,k}^{-}).
\end{align*}
For the same range of $t$ and $0\leq \theta \leq \theta_k^3$, we define $N(t,\theta)$ by
\begin{align*}
N(t,\theta) = B_{0,k}(t)(\theta-\theta_k^3)^2 + C_{0,k}(t)(\theta-\theta_k^3) + N(t,\theta_k^3),
\end{align*}
with 
$\theta_k^3B_{0,k}(t) =  \tfrac{1}{2}\pa_{\theta}N(t,\theta_{k}^3)$, $C_{0,k}(t) = \pa_{\theta}N(t,\theta_k^3)$ 
and analogously for $\pi - \theta_k^3\leq \theta \leq \pi$. We finally reflect evenly across the $x$-axis, to define $N(t,\theta)$ for all $\theta$.
\end{defn}
The coefficients $A_{j,k}(t)$, $B_{j,k}(t)$ and $C_{j,k}(t)$ are chosen so that $N(t,\theta_{j,k}^{+}) = N(t,\theta_{j,k}^{-})$ , and that $\pa_{\theta}N(t,\theta)$ matches at $\theta = \theta_{j,k}^{\pm}$ with the values coming from Definition \ref{defn:N2}.  It therefore ensures that for $\theta_{j,k}^{-} \leq \theta \leq \theta_{j,k}^{+}$, we have
\begin{align*}
N(t_k,\theta) = t_k\delta_k, \qquad  N(t_{k-1}e^{-M_1},\theta) = e^{-M_1}t_{k-1}\delta_{k-1}, \qquad
C^{-1} \delta_k \leq \int_{t_k}^{t_{k-1}e^{-M_1}} \frac{N(t,\theta)}{t^2} \ud t  \leq C\delta_k.
\end{align*}
Moreover, for this range of $\theta$
\begin{align*}
\left|\pa_{\theta}N(t,\theta)\right|  \leq  C\theta_k^{-2}N(t,\theta), \qquad 
\left|\pa^2_{\theta}N(t,\theta)\right|  \leq C \theta_k^{-5}N(t,\theta) 
\end{align*}
and so Property \ref{prop:4}) holds provided $\pa_tN(t,\theta)$ has the desired lower bound. For $1\leq j \leq \theta_k^{-1}-1$, we have
\begin{align} \label{eqn:inter2aa}
\pa_{t}N(t,\theta) = \left(\pa_tA_{j,k}(t) (\theta-\theta_{j,k}^{-})^3 + \pa_tB_{j,k}(t) (\theta-\theta_{j,k}^{-})^2 + \pa_tC_{j,k}(t)(\theta-\theta_{j,k}^{-})\right) + \pa_tN(t,\theta_{j,k}^{-}),
\end{align}
and we can bound the group of terms in parentheses in absolute value by
\begin{align} \label{eqn:inter2a}
\tilde{C}_1\theta_k^3\left(\left|\pa_t\pa_{\theta}N(t,\theta_{j,k}^{-})\right| + \left|\pa_t\pa_{\theta}N(t,\theta_{j,k}^{+})\right|\right). 
\end{align}
There is also the analogous bound for $j=0$. By a direct calculation, there exists an absolute constant $K$ such that for $k\geq K$ the terms in \eqref{eqn:inter2a} by $\tfrac{1}{2} \pa_tN(t,\theta_{j,k}^{-1})$, and so Property \ref{prop:4}) holds. 
\\
\\
The function $N(t,\theta)$ has now been defined for $t_k \leq t \leq t_{k-1}e^{-M_1}$, $0 \leq \theta \leq 2\pi$, so that Properties \ref{prop:1}), \ref{prop:2}), and \ref{prop:4}) hold (with constants independent of $k$ and $M_1$). Moreover, we have
\begin{align} \label{eqn:integral1}
\int_{t_k}^{t_{k-1}e^{-M_1}}\frac{N(t,\theta)}{t^2} \ud t = F_{k}(\theta) \delta_{k},
\end{align}
with $C^{-1} \leq F_{k}(\theta) \leq C$, for an absolute constant $C$ independent of $k$ and $M_1$. The bounds on $\pa_{\theta}N(t,\theta)$, $\pa^2_{\theta}N(t,\theta)$ from Property \ref{prop:4}) imply that 
\begin{align} \label{eqn:integral2}
\left|\pa_{\theta}F_{k}(\theta)\right| \leq C\theta_{k}^{-2}, \qquad \left|\pa^2_{\theta}F_{k}(\theta)\right| \leq C\theta_k^{-5}. 
\end{align}
\textbf{Definition of $N(t,\theta)$ for $t_{k-1}e^{-M_1} \leq t \leq t_{k-1}$}:
\\
\\
For $t_{k-1}e^{-M_1} \leq t \leq t_{k-1}$ and $0 \leq \theta \leq 2\pi$, we define $N(t,\theta)$ to ensure that Property \ref{prop:3}) holds (while maintaining the other properties).
\begin{defn} \label{defn:N5}
Define  $N(t,\theta)$ by  $N(t,\theta) = a_{k}(\theta)t_{k-1}^{-1}t^2 + b_{k}(\theta)t + c_{k}(\theta)t_{k-1}$ with the coefficients chosen so that
\begin{align*}
N(t_{k-1}e^{-M_1},\theta) = t_{k-1}e^{-M_1}\delta_{k-1}, \qquad N(t_{k-1},\theta) = t_{k-1}\delta_{k-1}, \qquad \int_{t_{k}}^{t_{k-1}} \frac{N(t,\theta)}{t^2} \ud t = \tilde{C}\delta_{k}.
\end{align*}
Here $\tilde{C}$ is an absolute constant  to be prescribed below.
\end{defn}
By choosing $M_1$ and $\tilde{C}$ to be sufficiently large (depending only on the constant $C$ from \eqref{eqn:integral2}), and solving for $a_k(\theta)$, $b_k(\theta)$, $c_k(\theta)$, Property \ref{prop:4}) holds for this range of $t$.
\\
\\
This now defines $N(t,\theta)$ for $\theta\in[0,2\pi]$, $0<t\leq t_K$, with Properties \ref{prop:1}) - \ref{prop:4}), and so by Claim \ref{claim:Main}, this proves Proposition \ref{prop:Dini4} and shows that Assumption \ref{assum:Main} does not automatically hold in $3$ and higher dimensions. 
\end{proof}

\section{Appendix} \label{sec:appendix}

In the Appendix we record the proofs of some of the technical propositions and lemmas needed to prove Theorems \ref{thm:Lipschitz} and \ref{thm:Phi}. 
\\
\\
\textbf{Proof of Theorem \ref{thm:convex}:} We first prove Theorem \ref{thm:convex} without Assumption \ref{assum:smooth} via an approximation argument. From Section \ref{sec:FH1}, the result of Theorem \ref{thm:convex} holds when $W$ is smooth and strictly geodesically convex, and $\gamma^{\pm}$ are $(n-2)$-dimensional submanifolds, smooth up to the boundary of $W$. The first step in the proof is therefore to approximate $W$ and $\gamma^{\pm}$.
\begin{prop} \label{prop:convex1}
Given $\eps, \delta>0$, there exist a smooth, strictly geodesically convex set $W_{\eps}\subset W$, and $(n-2)$-dimensional manifolds $\gamma^{\pm}_{\delta}\subset \bar{W}^{\pm}$, smooth up to the boundary of $\pa W$, such that
\begin{align*}
\emph{Haus}_{\mS^{n-1}}(W_{\eps},W)  <\eps, \qquad \emph{Haus}_{\mS^{n-1}}(\gamma^{\pm}_{\delta},\gamma^{\pm}) <\delta. 
\end{align*}
Here $\emph{Haus}_{\mS^{n-1}}(\cdot,\cdot)$ measures the Hausdorff distance between the sets in $\mS^{n-1}$. 
\end{prop}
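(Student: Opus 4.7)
The plan is to handle the two approximations separately.

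For the approximation of $W$: Since $W$ is a closed proper convex subset of $\mS^{n-1}$, it lies in some closed hemisphere. I would first form the inner retraction $W^{(1)}_\eps = \{x \in W : d_g(x,\pa W) \ge \eps/3\}$, which remains geodesically convex because the geodesic distance to the boundary of a convex set on $\mS^{n-1}$ is concave where positive. To obtain smoothness and strict convexity, I would then work in the gnomonic projection from the enclosing open hemisphere to $\R^{n-1}$: the gnomonic map sends great circles to straight lines, so geodesically convex sets correspond exactly to Euclidean convex sets, and positivity of the second fundamental form of the boundary transfers between the two pictures. In the Euclidean picture, classical approximation theory (Minkowski erosion by a small ball, followed by mollification of the indicator against a smooth radial kernel) yields a smooth, strictly convex body arbitrarily close in Hausdorff distance to $W^{(1)}_\eps$; pulling back gives the desired $W_\eps \subset W$.

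For the approximation of $\gamma^\pm$: The H\"older continuous function $w$ satisfies $w > 0$ on $W^+$, $w < 0$ on $W^-$, and $\gamma^\pm = \pa\{\pm w > 0\} \cap \mathring W$. I would first extend $w$ across $\pa W$ by local reflection in the Lipschitz boundary (or by a Whitney-type extension), then mollify to produce a smooth $w_\eta$ with $\|w - w_\eta\|_{C^0(\bar W)} < \eta$. By Sard's theorem, regular values of $w_\eta$ in $(\eta, 2\eta)$ form a set of full measure; a further application of Sard to the restriction of $w_\eta$ to $\pa W$ lets me choose $\tau \in (\eta, 2\eta)$ so that $\{w_\eta = \tau\} \cap \bar W$ is a smooth $(n-2)$-dimensional submanifold of $\bar W$ meeting $\pa W$ transversally, hence smooth up to $\pa W$. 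Setting $\gamma^+_\delta = \{w_\eta = \tau\} \cap \bar W$ gives a manifold contained in $\bar W^+$ (since $\{w_\eta \ge \tau\} \subset \{w > 0\}$ once $\eta$ is small relative to $\tau$), and the H\"older continuity of $w$ together with a suitable choice of $\eta$ as a small power of $\delta$ yields the Hausdorff distance bound. An analogous construction applied to $-w$ produces $\gamma^-_\delta$.

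The main obstacle is controlling the behavior at $\pa W$: the extension of $w$ must be arranged so that the mollified function has no spurious zeros in a collar near $\pa W$, since otherwise $\{w_\eta = \tau\}$ might produce extra components outside $\bar W^\pm$ or accumulate on $\pa W$ rather than crossing it transversally. I would address this by choosing the extension to preserve the sign of $w$ in a small tubular neighborhood of $\pa W$ (for instance by reflecting $w$ through nearest-point retraction onto $\pa W$), after which the mollification radius is taken much smaller than the distance from the support of $w^+$ to that of $w^-$, both of which are positive since $\bar W^+$ and $\bar W^-$ are disjoint compact sets inside $W$. Transversality of $\gamma^\pm_\delta$ with $\pa W$ is then secured by the double Sard argument above.
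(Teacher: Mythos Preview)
Your approach is essentially the same as the paper's. The gnomonic projection you invoke is exactly the paper's device of passing to the cone over $W$ and intersecting with the hyperplane $\{x_n=1\}$, after which both of you cite classical Euclidean convex approximation; and for $\gamma^\pm_\delta$ both you and the paper mollify $w$ and apply Sard to a nearby level set. One small correction: you assert that $\bar W^+$ and $\bar W^-$ are disjoint compact sets, but in general their closures meet along $\{w=0\}$, so the separation you want is rather between $\{w>\eta\}$ and $\{w<-\eta\}$, which is indeed positive by the H\"older continuity of $w$ --- this is what the paper uses and it suffices for your argument as well.
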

\begin{proof}{Proposition \ref{prop:convex1}}
We first construct the set $W_{\eps}$. Since $W$ is a proper, convex subset of $\mS^{n-1}$, it must be contained in a hemisphere, say $\mS^{n-1}\cap\{x_{n}\geq0\}$. We use $W\subset \mS^{n-1}$ to form a $n$-dimensional cone $R$, with vertex at the origin. By the convexity of $W$, $R$ is a Euclidean convex subset of $\R^{n}$, and we set $K$ to be the $(n-1)$-dimensional cross section $K = R\cap\{x_{n} =1\}$.  We can therefore form a sequence $K_{\eps}\subset K$ of smooth, strictly convex, bounded sets, which converge to $K$ in Hausdorff distance on compact sets as $\eps$ tends to $0$, \cite{Sc}. Using $K_{\eps}$ to form the sequence of strictly Euclidean convex cones $R_{\eps}$, and then restricting to $\mS^{n-1}$, we obtain a sequence of strictly convex, smooth sets $W_{\eps} \subset W$, converging to $W$ in Hausdorff measure on $\mS^{n-1}$ as $\eps$ tends to $0$. 
\\
\\
To construct the smooth sets $\gamma^{\pm}_\delta$ we proceed as follows: Given $\eta>0$, since $w\in C^{\alpha}(W)$, the distance between $\{w > \eta\}$ and $\pa \{w > 0\}$ is strictly positive, and tends to zero as $\eta$ tends to $0$. We approximate $w$ from below by a sequence $w_n\in C^{\infty}(W)$, converging uniformly to $w$ on $W$.  Therefore, by choosing $n = n(\delta)$ sufficiently large and $\eta = \eta(\delta)>0$ sufficiently small and using Sard's lemma, we can construct a smooth set $\gamma^{+}_\delta = \{w_n > \eta\}$ contained in $\{w>0\}$, with the required properties, and analogously for $\gamma^{-}_{\delta}$. 
\end{proof}

Suppose first that one of $\gamma_{\delta}^{\pm}$ does not intersect $W_{\eps}$ for some $\delta$, $\eps>0$. Then, one of $W^{\pm}$ is contained in $W\backslash W_{\eps}$. If this holds for $\delta$ and $\eps>0$ sufficiently small, then since
\begin{align*}
\lim_{\eps\to0} \text{Vol}_{\mS^n}(W\backslash W_{\eps}) =0, \qquad  \lim_{\delta\to0}\text{Haus}_{\mS^n}(\gamma^{\pm}_{\delta},\gamma^{\pm}) =0,
\end{align*} 
one of $\alpha(W^{\pm})$ must be larger than $2$, and the estimate in Theorem \ref{thm:convex} holds automatically. Therefore, from now on we will assume that $\gamma_{\delta}^{\pm}$ both intersect $W_{\eps}$.
\\
\\
To complete the proof of the theorem, we will show that the approximation statement in Proposition \ref{prop:convex1} carries over to the Dirichlet-Neumann eigenvalues.
\begin{defn} \label{defn:approx-eigenvalues}
 Denoting $\gamma^{\pm}_{\eps,\delta}$ to be the part of $\gamma^{\pm}_{\delta}$ contained in $W_{\eps}$,  let $W^{\pm}_{\eps,\delta} \subset W_{\eps}$ be the set with boundary consisting of  $\gamma^{\pm}_{\eps,\delta}$ and the part of $\pa W_{\eps}$ contained in $W^{\pm}$. We also denote $W^{\pm}_{\eps}$ to be the sets $W_{\eps}\cap W^{\pm}$, which correspond to  $W^{\pm}_{\eps,\delta}$ with $\delta=0$.
\end{defn}
\begin{defn} \label{defn:DN-approx}
Let $\mu^{\pm}_{\eps,\delta}$ be the first eigenvalue for $({W}^{\pm}_{\eps,\delta},g_{\eps})$, with Dirichlet boundary conditions imposed on ${\gamma}^{\pm}_{\eps,\delta}$,and Neumann boundary conditions elsewhere. Let $\mu^{\pm}_{\eps}$ be the first eigenvalue for ${W}^{\pm}_{\eps}$, with Dirichlet boundary conditions imposed on $\gamma^{\pm}$, and Neumann boundary conditions on the rest of $\pa W^{\pm}_{\eps}$.
\end{defn}
The proof of Theorem \ref{thm:convex} is completed by the following lemma:
\begin{lem} \label{lem:eigenvalue-limit2}
The  limits $\lim_{\delta\to0} \mu^{\pm}_{\eps,\delta} = \mu_{\eps}^{\pm}$, $\limsup_{\eps\to0} \mu_{\eps}^{\pm} \leq \mu^{\pm}$ both hold. 
\end{lem}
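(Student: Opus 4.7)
The plan is to establish the two convergence statements by standard domain-perturbation arguments, using the approximation properties from Proposition \ref{prop:convex1}.

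For the first limit, $\lim_{\delta\to 0}\mu^{\pm}_{\eps,\delta} = \mu^{\pm}_{\eps}$, I will prove matching bounds. The inclusion $W^{\pm}_{\eps,\delta}\subset W^{\pm}_{\eps}$, which holds because $\gamma^{\pm}_{\delta}$ sits strictly inside $W^{\pm}$ by construction, immediately gives the direction $\mu^{\pm}_{\eps}\leq \mu^{\pm}_{\eps,\delta}$: extend the normalised first eigenfunction on $W^{\pm}_{\eps,\delta}$ by zero across $\gamma^{\pm}_{\eps,\delta}$ to obtain a function in $H^1(W^{\pm}_{\eps})$ (the zero extension is admissible because the Dirichlet trace vanishes), which vanishes on $\gamma^{\pm}\subset W^{\pm}_{\eps}\setminus W^{\pm}_{\eps,\delta}$ and has the same Rayleigh quotient. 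For the reverse inequality, let $v_{\eps}$ be the first eigenfunction on $W^{\pm}_{\eps}$, $L^2$-normalised. Build a cutoff $\chi_{\delta}$ that vanishes on $\gamma^{\pm}_{\eps,\delta}$, equals $1$ outside a tubular neighbourhood of $\gamma^{\pm}_{\eps,\delta}$ of width $\tau=\tau(\delta)\to 0$, depends only on distance from $\gamma^{\pm}_{\eps,\delta}$ inside $W^{\pm}_{\eps,\delta}$ so as to be compatible with the Neumann boundary on $\partial W_{\eps}$, and satisfies $|\nabla\chi_{\delta}|=O(1/\tau)$. Then $\chi_{\delta}v_{\eps}$ is admissible for $\mu^{\pm}_{\eps,\delta}$, and
$$
\int\bigl|\nabla(\chi_{\delta}v_{\eps})\bigr|^{2}\,d\sigma
\leq (1+o(1))\int|\nabla v_{\eps}|^{2}\,d\sigma + \frac{C}{\tau^{2}}\int_{T_{\tau}} v_{\eps}^{2}\,d\sigma,
$$
where $T_{\tau}$ denotes the collar of width $O(\tau)$. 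Since $\gamma^{\pm}_{\eps,\delta}\to\gamma^{\pm}$ in Hausdorff distance and $v_{\eps}$ is continuous up to $\gamma^{\pm}$ where it vanishes, $\|v_{\eps}\|_{L^{\infty}(T_{\tau})}\to 0$; together with $|T_{\tau}|=O(\tau)$, a sufficiently slow choice of $\tau(\delta)\to 0$ makes the error $o(1)$. The denominator tends to $1$, so the Rayleigh quotient converges to $\mu^{\pm}_{\eps}$.

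For the second limit, $\limsup_{\eps\to 0}\mu^{\pm}_{\eps}\leq \mu^{\pm}$, the argument is cleaner because the Dirichlet boundary $\gamma^{\pm}$ is not being perturbed. Let $v$ be the $L^{2}$-normalised first eigenfunction on $W^{\pm}$. Since $W^{\pm}_{\eps}=W_{\eps}\cap W^{\pm}\subset W^{\pm}$ and the Dirichlet part of $\partial W^{\pm}_{\eps}$ is exactly $\gamma^{\pm}\cap \bar W_{\eps}$, the restriction $v\!\upharpoonright_{W^{\pm}_{\eps}}$ is admissible for $\mu^{\pm}_{\eps}$. Proposition \ref{prop:convex1} yields $\mathrm{Haus}_{\mathbb{S}^{n-1}}(W_{\eps},W)<\eps$ and in particular $\mathrm{Vol}_{\mathbb{S}^{n-1}}(W^{\pm}\setminus W^{\pm}_{\eps})\to 0$, so the numerator and denominator of the Rayleigh quotient on $W^{\pm}_{\eps}$ converge to those on $W^{\pm}$, giving the claim.

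The main obstacle is the cutoff step of the first limit, where the $\tau^{-2}$ blow-up of $|\nabla\chi_{\delta}|^{2}$ must be absorbed by quantitative vanishing of $v_{\eps}$ near $\gamma^{\pm}$. Qualitative continuity already suffices (one can pick $\tau$ shrinking slowly enough relative to the modulus of continuity of $v_{\eps}$), but the cleanest route is to invoke Hölder continuity of $v_{\eps}$ up to $\partial W^{\pm}_{\eps}$, which follows from standard boundary regularity for elliptic eigenvalue problems on the smooth strictly convex domain $W_{\eps}$ together with the Dirichlet condition on $\gamma^{\pm}$. One must also verify that the distance function to $\gamma^{\pm}_{\eps,\delta}$ used to define $\chi_{\delta}$ remains well-behaved near the Neumann portion of $\partial W_{\eps}$; this is arranged by smoothing the distance function in a fixed interior layer away from $\gamma^{\pm}$, which does not affect the estimate above.
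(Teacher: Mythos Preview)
Your argument is correct, but for the first limit you work harder than necessary. The paper handles the inequality $\mu^{\pm}_{\eps,\delta}\ge\mu^{\pm}_{\eps}$ the same way you do (extension by zero, phrased as the inclusion $X^{\pm}_{\eps,\delta}\subset X^{\pm}_{\eps}$ of admissible test function classes). For the reverse inequality, however, the paper avoids cutoffs entirely: since $\mu^{\pm}_{\eps}$ is the infimum of the Rayleigh quotient over \emph{smooth} functions $\phi$ with $\mathrm{supp}(\phi)\cap\gamma^{\pm}=\emptyset$, any near-optimal $\phi$ already vanishes on a full neighbourhood of $\gamma^{\pm}$. Because $\gamma^{\pm}_{\eps,\delta}\to\gamma^{\pm}$ in Hausdorff distance, for $\delta$ small enough this same $\phi$ vanishes near $\gamma^{\pm}_{\eps,\delta}$ and hence is admissible for $\mu^{\pm}_{\eps,\delta}$ with identical Rayleigh quotient. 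This bypasses the regularity issues you flag (continuity of $v_{\eps}$ up to the possibly irregular Dirichlet set $\gamma^{\pm}$, behaviour of the distance cutoff near $\partial W_{\eps}$), which are genuine obstacles in your route that require the extra justification you sketch.

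For the second limit you essentially reprove Lemma~\ref{lem:eigenvalue-lower}, which the paper simply cites; your version is slightly softer since for the $\limsup$ statement one only needs $\int_{W^{\pm}\setminus W^{\pm}_{\eps}}v^{2}\to 0$ rather than the $L^{\infty}$ bound used there. Both approaches are fine here.
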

\begin{proof}{Lemma \ref{lem:eigenvalue-limit2}}
Since $\lim_{\eps\to0} \text{Vol}_{\mS^n}(W\backslash W_{\eps}) = 0$, the second limit in the lemma follows immediately from Lemma \ref{lem:eigenvalue-lower}. To establish the first limit we proceed as follows. By the variational formulation of the first eigenvalue, we have
\begin{align*}
\mu^{\pm}_{\eps,\delta} = \inf_{\phi\in X^{\pm}_{\eps,\delta}} \frac{\int_{{W}^{\pm}_{\eps,\delta}} \left|\nabla_{g}\phi\right|^2  \ud \sigma_{g}}{\int_{{W}^{\pm}_{\eps,\delta}} \phi^2 \ud \sigma_{g}}.
\end{align*}
for $X^{\pm}_{\eps,\delta} = \{\phi \in C^{\infty}({W}^{\pm}_{\eps,\delta}) : \text{supp}(\phi) \cap \gamma^{\pm}_{\eps,\delta} =\emptyset , \phi\neq0\}.$ Define the set of functions $X^{\pm}_{\eps}$ by $X^{\pm}_{\eps} = \{\phi \in C^{\infty}({W}^{\pm}_{\eps}) : \text{supp}(\phi) \cap \gamma^{\pm} =\emptyset , \phi\neq0\}.$ Since $X^{\pm}_{\eps,\delta}\subset X^{\pm}_{\eps}$, we immediately obtain $\mu^{\pm}_{\eps,\delta}\geq\mu^{\pm}_{\eps}$. Moreover, given any $\phi \in X^{\pm}_{\eps}$, since the Hausdorff distance between $\gamma_{\eps,\delta}$ and $\gamma$ tends to $0$ as $\delta$ tends to $0$ (uniformly in $\eps>0$), we see that $\phi\in X^{\pm}_{\eps,\delta}$ for $\delta>0$ sufficiently small (uniformly in $\eps>0$). In particular, given $c>0$, we can choose $\delta$ sufficiently small so that $\mu^{\pm}_{\eps,\delta} \leq \mu^{\pm}_{\eps} +c$. Thus,  $\lim_{\delta\to0}\mu^{\pm}_{\eps,\delta} = \mu^{\pm}_{\eps},$ as required.
\end{proof}
\noindent
\textbf{Regularity of the spherical slices $V_t$:} In Section \ref{sec:FH2}, we required Lemma \ref{lem:graph} to prove properties of the spherical sections $V_{t}$ for those values of $t$ where $t^{-1}M_{\textbf{0}}(t)$ is sufficiently small. We now prove this lemma and its corollary, and we start by restating Lemma \ref{lem:graph}:
 \begin{lem} \label{lem:graph-a}
There exists a constant $c>0$, depending only on the Lipschitz norm of $\pa\Omega$, and an orientation of $\Omega$ with the following property: For $t\in S_{c}$, the part of the boundary $\pa\Omega \cap \left(B_{2t}\backslash B_{t/2}\right)$ can be written as the graph $x_n = g(x')$ of a convex function $g$, with Lipschitz constant depending only on that of $\pa\Omega$. Here we have written $x = (x',x_n)\in\R^n$.
 \end{lem}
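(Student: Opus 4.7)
The plan is to use the near-conicity of $\Omega$ on scale $t$ (enforced by $t \in S_c$ with $c$ small) to propagate uniform transversality of the outer normals from the tangent cone at $\mathbf{0}$ to the actual boundary $\pa\Omega$ near the origin. Throughout, let $L$ denote the Lipschitz constant of $\pa\Omega$, and let $\mathcal{C}_0 := \overline{\bigcup_{\lambda > 0} \lambda \Omega}$ be the tangent cone of $\Omega$ at $\mathbf{0}$, a closed convex cone with vertex at $\mathbf{0}$ whose boundary is a Lipschitz graph with constant at most $L$ in some direction.

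I would first fix the orientation. After a rotation we can write $\mathcal{C}_0 = \{(x', x_n) : x_n > h(x')\}$ with $h$ convex, positively $1$-homogeneous, and $\mathrm{Lip}(h) \le L$. In these coordinates every outer unit normal $\nu$ to $\pa\mathcal{C}_0\setminus\{\mathbf{0}\}$ satisfies $\nu \cdot e_n \le -\delta_0$, where $\delta_0 := (1+L^2)^{-1/2}$. This rotation is the orientation claimed by the lemma.

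Next, I would show that for $c = c(L)$ small and every $t \in S_c$, every outer unit normal $\nu(y)$ at every $y \in \pa\Omega \cap (B_{2t}\setminus B_{t/2})$ satisfies $\nu(y) \cdot e_n \le -\delta_0/2$. The definition of $S_c$ together with convexity ($\mathbf{0}\in\bar\Omega$) gives $0 \le \nu(y)\cdot y \le M_{\mathbf{0}}(2t) \le 2tc$. Rescaling by $t^{-1}$, the set $\tilde\Omega := t^{-1}\Omega$ satisfies $s^{-1}M^{\tilde\Omega}_{\mathbf{0}}(s) \le c$ for all $s \in [1/4, 4]$. I would then argue by contradiction: if the transversality failed for a sequence $t_n \in S_{c_n}$ with $c_n \to 0$ at some $\tilde y_n \in \pa\tilde\Omega_{t_n}\cap (B_2\setminus B_{1/2})$, then by the uniform Lipschitz bound on $\pa\Omega$ and Blaschke's selection theorem, a subsequence of $\tilde\Omega_{t_n}\cap \bar B_4$ would converge in Hausdorff distance to a compact convex set $K_\infty$ with $\mathbf{0}\in\pa K_\infty$, Lipschitz boundary of constant at most $L$, and $M^{K_\infty}_{\mathbf{0}}(s) = 0$ on $[1/4, 4]$. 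The vanishing of $M^{K_\infty}_{\mathbf{0}}$ together with convexity forces every support plane of $K_\infty$ at any boundary point in $\bar B_4$ to pass through $\mathbf{0}$, so $K_\infty$ agrees on $\bar B_4$ with its own tangent cone at $\mathbf{0}$; since the tangent cone of $\tilde\Omega_{t_n}$ at $\mathbf{0}$ is $\mathcal{C}_0$ for every $n$, this gives $K_\infty = \mathcal{C}_0 \cap \bar B_4$. Upper semicontinuity of the outer normal cone under Hausdorff convergence of convex bodies then forces any limit $\nu_\infty$ of $\nu(\tilde y_n)$ to be an outer normal of $\pa\mathcal{C}_0$ at the limit point $\tilde y_\infty \in \pa\mathcal{C}_0 \cap (\bar B_2\setminus B_{1/2})$, so $\nu_\infty \cdot e_n \le -\delta_0$, contradicting $\nu(\tilde y_n)\cdot e_n > -\delta_0/2$.

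Given the uniform transversality on $\pa\Omega \cap (B_{2t}\setminus B_{t/2})$, convexity of $\Omega$ standardly implies that this portion of $\pa\Omega$ is the graph $x_n = g(x')$ of a convex function $g$ with Lipschitz constant at most $\sqrt{1 - (\delta_0/2)^2}/(\delta_0/2)$, a bound depending only on $L$. The main obstacle is the middle step: the identification of the subsequential Hausdorff limit $K_\infty$ with $\mathcal{C}_0 \cap \bar B_4$ has to be carried out uniformly in $t \in S_c$, which requires combining the cone property forced by $M^{K_\infty}_{\mathbf{0}}\equiv 0$ on $[1/4, 4]$ with the scale-invariance of $\mathcal{C}_0$ under the rescaling $t^{-1}\Omega$, so that the conclusion does not hinge on whether the subsequential $t_n$ goes to $0$ or to some $t_* > 0$.
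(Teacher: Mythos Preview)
Your compactness approach is different from the paper's. The paper fixes the orientation not via the tangent cone $\mathcal{C}_0$ but via the finite-scale slice $V_1$: since $V_1\subset\mS^{n-1}$ contains a geodesic ball of radius $r^*=r^*(L)$, one rotates so its centre is the north pole, and then uses that $V_t\supset V_1$ for all $t<1$. For $y\in\pa\Omega\cap(B_{2t}\setminus B_{t/2})$ the supporting hyperplane at $y$ must miss that geodesic ball inside $V_t$, which directly forces a lower bound on $\nu_1^2+\nu_n^2$ (after rotating $y$ into the $(x_1,x_n)$-plane); then the near-orthogonality $|y|^{-1}(y_1\nu_1+y_n\nu_n)\le c$ coming from $t\in S_c$ together with the fact that $y/|y|$ stays away from both poles yields $|\nu_n|\ge c^*$, and the ball at the north pole fixes the sign. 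Everything is explicit in $r^*$, hence in $L$, with no compactness.

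Your route is correct in outline but the step you single out is genuinely the weak point. The implication ``the tangent cone of each $\tilde\Omega_{t_n}$ is $\mathcal{C}_0$, hence $K_\infty=\mathcal{C}_0\cap\bar B_4$'' does not follow from general principles: the tangent-cone map is not continuous under Hausdorff convergence. For fixed $\Omega$ the argument does go through by the case split you mention (if $t_n\to0$ one uses that blow-ups converge to $\mathcal{C}_0$; if $t_n\to t_*>0$ then $M_{\mathbf 0}\equiv0$ on $[t_*/4,4t_*]$, so $\Omega$ is already conical on $B_{4t_*}$ and there is nothing to prove), but this produces only $c=c(\Omega)$. To obtain $c$ depending on $L$ alone you must let $\Omega$ vary along the contradicting sequence, and then the identification of $K_\infty$ with a single $\mathcal{C}_0$ is no longer available; one has to argue instead that the limit cone, whatever it is, is still an $L$-Lipschitz graph over $\{x_n=0\}$. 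The paper's direct argument sidesteps this entirely because the geodesic ball in $V_1$ supplies a uniform, finite-scale obstruction that persists for every $t<1$.
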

 \begin{proof}{Lemma \ref{lem:graph-a}}
We fix an orientation of $\Omega$ by looking at the slice $V_1$: This slice $V_1\subset \mathbb{S}^{n-1}$ contains a geodesic ball of radius $r^*>0$, and we rotate $\Omega$ so that the center of this geodesic ball is at the north pole $(0,\ldots,0,1)\in\mathbb{S}^{n-1}$. Since the slices $V_t$ form a increasing sequencs as $t$ decreases, $V_t$ contains this geodesic ball for all $t<1$. Moreover, since $V_t$ is always contained in a hemisphere, this means that $V_t$ cannot contain the corresponding neighbourhood of the south pole. 
\\
\\
Now let $y\in \pa\Omega \cap \left(B_{2t}\backslash B_{t/2}\right)$, with outward pointing unit normal $\nu(y)$. To prove the lemma, we will show that the component of $\nu(y)$ in the $x_n$-direction is bounded above by $-c^*$, for a constant $c^*>0$ depending only on the Lipschitz constant of $\pa\Omega$. After a rotation in the $x'$-variables, we can assume that $y$ lies in the $(x_1x_n)$-plane. Writing $\nu(y) = (\nu_1,\ldots,\nu_n)$, we first obtain a lower bound on $\nu_1^2+\nu_n^2$ as follows: Since $\Omega$ is convex, the tangent plane at $y$ provides a supporting hyperplane for $\Omega$. Therefore, if $\nu_1^2+\nu_n^2$ is small relative to $r^*$, then this would contradict the spherical slices $V_t$ containing the geodesic ball of radius $r^*$ centred at the north pole.
\\
\\
Now that we have a lower bound on $\nu_1^2+\nu_n^2$, we use the upper bound on $s^{-1}M_{\textbf{0}}(s)$ for $s\in[\tfrac{1}{2}t,2t]$ to conclude the proof. Since $t\in S_{c}$, we have
\begin{align*}
|y|^{-1}\left(y_1\nu_1 + y_n\nu_n\right) = \frac{y}{|y|}\cdot \nu(y) \leq c,
\end{align*}
so that the vectors $\tfrac{y}{|y|}$ and $(\nu_1,\nu_n)$ are almost orthogonal in the $(x_1x_n)$-plane. Since $\tfrac{y}{|y|}$ cannot be within distance $r^*$ from the north or south pole, by taking $c$ sufficiently small depending on $r^*$, this provides a lower bound on $|\nu_n|$. Finally, $\nu_n$ must be negative, since otherwise this would again contradict $V_t$ containing the north pole.  
\end{proof}
 \begin{cor} \label{cor:graph-a}
For $c$ and the orientation of $\Omega$ as in Lemma \ref{lem:graph}, and for $s\in[\tfrac{1}{2}t,2t]$ with $t\in S_c$, the slices $V_s$ are star-shaped with respect to the north pole in $\mathbb{S}^{n-1}$. Moreover, there exists a neighbourhood around each $y\in\pa V_s$ on which $\pa V_s$ can be parameterized as submanifold $(x'(\tau),g(x'(\tau)))$ for $\tau$ in an open set in $\R^{n-2}$. The Lipschitz constant of this parameterization and the size of the neighbourhood can be taken to only depend on that of $\pa\Omega$.
\end{cor}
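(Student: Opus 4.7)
\emph{Proof proposal for Corollary \ref{cor:graph-a}.} My plan is to reduce both assertions to the local graph representation $x_n = g(x')$ of $\pa\Omega$ provided by Lemma \ref{lem:graph-a}, extended to include the origin so that $g$ is convex with $g(0)=0$. A key preliminary input is that because $V_s \subset V_0$ and $V_0$ is a geodesically convex subset of $\mS^{n-1}$ containing a definite neighbourhood of the north pole $N$, every $y \in \pa V_s$ satisfies $y_n \geq \eps s$ for some $\eps > 0$ depending only on the Lipschitz constant of $\pa\Omega$. This uniform separation from the equator is what keeps all the estimates below quantitative.

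For star-shapedness I will fix $p \in V_s$ and work in the $2$-plane $\Pi$ through $0$, $N$, and $p$. The intersection $\Omega \cap \Pi$ is a two-dimensional Euclidean convex set with $0$ on its boundary, and $\pa(\Omega \cap \Pi)$ is locally the graph of a convex function $h$ with $h(0)=0$, namely the restriction of $g$ to $\Pi$. For $\phi$ in the upper half of $S^1_\Pi$, let $\rho(\phi)$ denote the radial distance from the origin to this graph in direction $(\sin\phi, \cos\phi)$, defined by $\rho(\phi)\cos\phi = h(\rho(\phi)\sin\phi)$. The convexity inequality $h'(u) \geq h(u)/u$ for $u > 0$ (which follows from convexity and $h(0)=0$) translates via implicit differentiation into $\rho'(\phi) \leq 0$, so $\rho$ is nonincreasing. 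Hence $\rho(\phi) \geq \rho(\phi_\ast) = s$ for all $\phi \in [0, \phi_\ast]$, where $\phi_\ast$ is the angular distance of $p$ from $N$, and the great-circle arc from $sN$ to $sp$ lies in $\bar\Omega \cap \Pi$. This proves $V_s$ is star-shaped about $N$.

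For the Lipschitz parametrization of $\pa V_s$ near $y$, rotate the $x'$-variables so that $y$ lies in the $(x_1, x_n)$-plane with $y_1 > 0$, and apply the implicit function theorem to
\begin{align*}
F(x_1, \tau) := x_1^2 + |\tau|^2 + \bigl(s^{-1} g(sx_1, s\tau)\bigr)^2 - 1 = 0
\end{align*}
to solve for $x_1 = x_1(\tau)$ in a neighbourhood of $\tau = 0 \in \R^{n-2}$. The essential nondegeneracy input is the quantitative lower bound
\begin{align*}
\pa_{x_1} F\bigl|_{(y_1/s,\,0)} = \tfrac{2}{s}\bigl(y_1 + y_n\,\pa_1 g(y_1, 0)\bigr) \;\geq\; \tfrac{2}{s}\cdot \tfrac{y_1^2 + y_n^2}{y_1} = \tfrac{2s}{y_1} \;\geq\; 2,
\end{align*}
where the first inequality again uses $\pa_1 g(y_1, 0) \geq y_n/y_1$ from convexity and $g(0)=0$. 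Combined with the Lipschitz bound on $g$, this yields the Lipschitz parametrization $\tau \mapsto (x_1(\tau), \tau, s^{-1} g(sx_1(\tau), s\tau))$ announced in Remark \ref{rem:graph}, with Lipschitz constant and size of neighbourhood depending only on the Lipschitz constant of $\pa\Omega$.

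The main obstacle is uniformity of the constants across all $s \in [\tfrac{1}{2}t, 2t]$ with $t \in S_c$ and all boundary points $y \in \pa V_s$. Both the monotonicity of $\rho(\phi)$ behind star-shapedness and the nondegeneracy of $\pa_{x_1} F$ behind the parametrization rest on the strict separation $y_n \geq \eps s$; without it, $2s/y_1$ could degenerate and the neighbourhood produced by the implicit function theorem could shrink to zero. This separation must be extracted from the geodesic convexity of the limit slice $V_0$ together with the uniform Lipschitz control on $\pa\Omega$, and verifying that $\eps$ can be chosen to depend only on the latter is the delicate piece of the argument.
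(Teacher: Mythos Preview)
Your argument uses only the one–sided convexity inequality $\nu(sy)\cdot sy \ge 0$ (equivalently $g(0)=0$ and $h'(u)\ge h(u)/u$), while the paper's proof uses the two–sided smallness $|\nu(sy)\cdot sy|\le cs$ coming from $t\in S_c$. This difference is not cosmetic: your route breaks when $y_n<0$, and that case does occur.

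Concretely, your lower bound on $\pa_{x_1}F$ reads
\[
2y_1 + 2\,y_n\,\pa_1 g(sy_1,0)\;\ge\;2y_1 + 2\,\frac{y_n^2}{y_1}\;=\;\frac{2}{y_1},
\]
but the step ``multiply $\pa_1 g\ge y_n/y_1$ by $y_n$'' reverses when $y_n<0$, yielding only the useless upper bound $2y_1+2y_n\pa_1 g\le 2/y_1$. Exactly the same sign issue defeats the monotonicity of $\rho(\phi)$: the quantity $\tfrac{d}{du}\bigl(u^2+h(u)^2\bigr)=2u+2h(u)h'(u)$ is controlled from below by $2(u^2+h^2)/u$ via $h'\ge h/u$ only when $h(u)\ge0$; for $h(u)<0$ one can make $u+hh'<0$ with a convex $h$ satisfying $h(0)=0$.

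Your proposed remedy is the separation $y_n\ge \eps$, but this is not true in general and is not implied by the geodesic convexity of $V_0$. A geodesically convex $V_0$ containing a ball around the north pole can still extend past the equator; e.g.\ if the tangent cone of $\Omega$ at $\mathbf 0$ is $\{x_n> -a|x'|\}$ with $a>0$ (locally $g(x')=|x'|^2-a|x'|$ is convex with $g(0)=0$), then points of $\pa V_s$ satisfy $y_n<0$ for small $s$. The paper avoids this obstacle by substituting the $S_c$ bound $g(sx')\ge \nabla g(sx')\cdot sx' - cs(1+|\nabla g|^2)^{1/2}$ into $\pa_{x_1}v$, which gives
\[
\pa_{x_1}v(x')\;\ge\;2x_1\bigl(1+(\pa_1 g)^2\bigr)+O(c)+O(|x_2|+\cdots+|x_{n-1}|)
\]
near $y'$, a uniform positive lower bound independent of the sign of $y_n$. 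The same derivative bound then yields the star-shapedness (uniqueness of the intersection of each half–meridian with $\pa V_s$). So the missing idea is precisely to exploit the {\em smallness} of $\nu\cdot y$ supplied by $S_c$, not merely its nonnegativity.
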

\begin{proof}{Corollary \ref{cor:graph-a}}
To establish the star-shaped property, it is sufficient to show that every great circle passing through the north pole intersects $\pa V_s$ at precisely two points. Without loss of generality, after a rotation in the $x'$ variables, we may take this great circle to be in $(x_1x_{n})$-plane, given by $\{x_2=x_3=\cdots=x_{n-1}=0\}\cap \mathbb{S}^{n-1}$. Let $y \in \pa V_s$ be on this great circle, with $y_1>0$. Then $sy\in \pa \Omega$, and writing $\nu(sy) = (\nu_1,\ldots, \nu_n)$, from Lemma \ref{lem:graph} we know that $\nu_n \leq - c_n$ for some constant $c_n >0$, and $|y|^{-1}\left(y_1\nu_1 + y_n\nu_n\right) \leq c$. Since as in the proof of Lemma \ref{lem:graph}, the tangent plane at $y$ is a supporting hyperplane for $\Omega$, for $c>0$ sufficiently small there exists no point $z$ in $\Omega$ on this great circle with $z_1>0$ and $z_n<y_n$. The analogous argument applies for the portion of the circle with $x_1<0$, and so this gives the star-shaped property.
\\
\\
We now establish the Lipschitz parameterization of $\pa V_s$. After a rotation in the $x'$-plane, let $y=(y_1,0,\ldots,0,s^{-1}g(sy'))\in \pa V_s$ with $y_1>0$. Here $g$ is the convex function from Lemma \ref{lem:graph}. Let ${v}(x_1,x_2)$ be given by
\begin{align*}
{v}(x') = x_1^2+\cdots +x_{n-1}^2 + s^{-2}g(sx')^2 - 1.
\end{align*}
Then, ${x}\in\pa V_s$ if and only if $x_n = s^{-1}g(sx')$ and $v(x') = 0$. In particular, $v(y') = 0$, and to establish the Lipschitz parameterization near this point, we will apply the implicit function theorem to the set $\{x':v(x') = 0\}$. We therefore, need to obtain a lower bound on 
\begin{align} \label{eqn:graph1}
\pa_{x_1}v(x') = 2x_1 + 2s^{-1}g(sx')(\pa_{x_1}g)(sx'),
\end{align} 
for $x'$ near $y'$. To obtain a lower bound on this quantity, we will use $M_{\textbf{0}}(t)$. For $|x| = s\in[\tfrac{1}{2}t,2t]$, we have
\begin{align*}
\nu(x)\cdot x = (1+|\nabla g(x')|^2)^{-1/2}(\pa_{x_1}g(x'),\ldots, \pa_{x_{n-1}}g(x'),-1)\cdot(x',g(x')) \leq cs,
\end{align*}
which we can rearrange as $g(x') \geq \nabla g(x')\cdot x' - cs(1+|\nabla g(x')|^2)^{1/2}.$ Inserting this in \eqref{eqn:graph1} we have
\begin{align*} 
\pa_{x_1}v(x') &\geq 2x_1 + 2\pa_{x_1}g(sx')\left(\nabla g(sx')\cdot x'\right)- 2c\pa_{x_1}g(sx')(1+|\nabla g(sx')|^2)^{1/2} \\
& = 2x_1 + 2x_1(\pa_{x_1}g(sx'))^2 + \text{ Error}.
\end{align*} 
Here the error terms consist of terms involving a factor of $x_2,\ldots,x_{n-1}$ or $c$. In particular, for $x'$ sufficiently near the point $y' = (y_1,0,\ldots,0)$ and for $c>0$ sufficiently small, the Error term is smaller than $x_1$, and we obtain the desired lower bound on $\pa_{x_1}v(x')$. 
\end{proof}
\textbf{Proof of Proposition \ref{prop:L1-harmonic}:} We end by proving Proposition \ref{prop:L1-harmonic}, which establishes a subdomain of $\Omega$ satisfying an $L^{\infty}$ harmonic measure estimate.
\begin{prop} \label{prop:L1-harmonic-a}
 There exist constants $c_1$, $C_1$, depending only on the Lipschitz norm of $\pa\Omega$,  such that for each ${x}_0\in \Omega\cap B_{1/2}$, we can form a  convex domain $\Omega_{{x}_{0}}\subset\Omega$, with the following properties:
\begin{enumerate}

\item[i)] The boundary of $\Omega_{{x}_{0}}$ consists of two parts $\pa\Omega_{{x}_{0},N}$ and $\pa\Omega_{{x}_{0},D}$. The first part is a (possibly empty) subset of $\pa\Omega$, and the second part ensures that $B_{c_1}({x}_{0}) \cap \Omega \subset \Omega_{{x}_{0}}$.

\item[ii)] Let $G$ be the Green's function for $\Omega_{{x}_{0}}$, with pole at ${x}_{0}$, with zero Dirichlet boundary conditions on $\pa\Omega_{{x}_{0},D}$, and zero Neumann boundary conditions (weakly) on $\pa\Omega_{{x}_{0},N}$. Then,
\begin{align*}
\norm{G}_{L^{\infty}\left(\Omega_{{x}_{0}}\backslash B_{c_1}({x}_0)\right)} \leq C_1, \qquad \norm{\nabla G}_{L^{\infty}\left(\pa\Omega_{{x}_{0},D}\right)} \leq C_1 .
\end{align*}

\end{enumerate}
\end{prop}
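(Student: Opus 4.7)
The plan is to construct $\Omega_{x_0}$ as a convex ``box'' adapted to the location of $x_0$, and then to verify the two bounds on the Green's function via reflection across the Neumann portion together with classical elliptic regularity. First I would split into two cases based on $d := \mathrm{dist}(x_0,\partial\Omega)$. If $d \geq c_1$ for a small constant $c_1$ depending only on the Lipschitz norm of $\partial\Omega$, I simply take $\Omega_{x_0} = B_{c_1}(x_0) \subset \Omega$, so that $\partial\Omega_{x_0,N}$ is empty and the Green's function on a Euclidean ball yields all required bounds explicitly. Otherwise, let $p$ be a closest point of $\partial\Omega$ to $x_0$, let $\nu$ be an outward unit normal to a supporting hyperplane at $p$, and choose coordinates so that $p=0$, $\nu=-e_n$, and $\Omega$ agrees locally with $\{(x',x_n) : x_n > g(x')\}$ for a nonnegative convex Lipschitz function $g$ with $g(0)=0$. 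Define
\begin{equation*}
\Omega_{x_0} := \Omega \cap \bigl\{|x'| < r_2\bigr\} \cap \bigl\{x_n < r_1\bigr\},
\end{equation*}
where $r_1, r_2$ depend only on the Lipschitz norm of $\partial\Omega$ and are chosen large enough that $B_{c_1}(x_0)\cap\Omega \subset \Omega_{x_0}$. Being an intersection of convex sets, $\Omega_{x_0}$ is convex. I set $\partial\Omega_{x_0,N} := \partial\Omega \cap \overline{\Omega_{x_0}}$ (the graph of $g$ over $\{|x'| < r_2\}$) and $\partial\Omega_{x_0,D}$ to be the remaining ``top'' $\{x_n = r_1\}$ and ``side'' $\{|x'| = r_2\}$ of the prism. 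The top never meets $\partial\Omega$ by the choice of $r_1$, and along the rim where the side meets $\partial\Omega$ the angle between the two surfaces is controlled below by the Lipschitz norm of $g$, so the transversality condition \eqref{eqn:harmonic1} between Dirichlet and Neumann portions holds uniformly.

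For the $L^\infty$ bound on $G$ off of $B_{c_1}(x_0)$, I would reflect $G$ across the Lipschitz convex Neumann portion $\partial\Omega_{x_0,N}$ exactly as in the proof of Proposition \ref{prop:harmonic}. The reflected $G$ extends to a nonnegative weak solution of a divergence-form elliptic equation with bounded measurable coefficients on a doubled bounded Lipschitz domain of controlled geometry, vanishing on the doubled Dirichlet boundary. The Litmann--Stampacchia--Weinberger estimates \cite{LSW} then yield $\|G\|_{L^\infty(\Omega_{x_0}\setminus B_{c_1}(x_0))} \leq C_1$, with $C_1$ depending only on the Lipschitz norm of $\partial\Omega$ and on the fixed scale $c_1$.

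The main obstacle is the pointwise bound $\|\nabla G\|_{L^\infty(\partial\Omega_{x_0,D})}\leq C_1$. Away from the rim $\partial\Omega_{x_0,D}\cap\partial\Omega_{x_0,N}$, the Dirichlet boundary is either a flat hyperplane piece (the top) or a smooth cylinder (the side), each satisfying an exterior ball condition at every point, including along the $90^\circ$ top--side edge. Combining the $L^\infty$ bound on $G$ from the previous step with the explicit linear barrier $y \mapsto C\,\mathrm{dist}(y,\partial\Omega_{x_0,D})$ gives $|\nabla G| \leq C$ via a Hopf-type argument. Near the rim, where the Dirichlet side meets the Neumann portion, I would again reflect $G$ across $\partial\Omega_{x_0,N}$, so that $G$ solves a divergence-form equation with bounded measurable coefficients on a doubled Lipschitz domain, and the remaining Dirichlet boundary becomes a pair of flat faces meeting at an angle bounded away from both $0$ and $\pi$ by the transversality verified above. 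The delicate point is that at this corner the domain is only Lipschitz, with no smooth exterior ball; however, the quantitative angle bound built into our construction supplies an exterior cone with opening bounded below, and boundary Hölder regularity for divergence-form operators \cite{CFMS}, combined with the same linear barrier, furnishes the pointwise gradient bound up to the corner with constants depending only on the Lipschitz norm of $\partial\Omega$. Without the transversality built into the construction of $\Omega_{x_0}$, the normal derivative of $G$ along the Dirichlet side could blow up at the rim; verifying this angle estimate is the key technical point that makes the whole proposition go through.
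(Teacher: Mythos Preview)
Your construction has a genuine gap at the ``rim'' where the cylinder $\{|x'|=r_2\}$ meets $\pa\Omega$.  At a rim point $y=(y',g(y'))$ the outward unit normals to the two faces are $\nu_D=(y'/r_2,0)$ and $\nu_N=(\nabla g(y'),-1)/\sqrt{1+|\nabla g|^2}$, so
\[
\nu_D\cdot\nu_N \;=\; \frac{(y'/r_2)\cdot\nabla g(y')}{\sqrt{1+|\nabla g(y')|^2}} \;\ge\; 0,
\]
because $g$ is convex with its minimum at the origin and hence has nonnegative radial derivative.  Thus the interior Dirichlet--Neumann angle of $\Omega_{x_0}$ at $y$ is at least $\pi/2$, and is strictly obtuse whenever $\pa\Omega$ is not flat there.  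In a mixed wedge of opening $\theta>\pi/2$ the model harmonic function behaves like $r^{\pi/(2\theta)}$ with exponent strictly less than $1$, so $|\nabla G|$ is unbounded at the corner; equivalently, there is \emph{no} linear function $\ell$ that is simultaneously nonnegative on $D$ and has $\pa_\nu\ell\ge 0$ on $N$ near $y$, so the ``same linear barrier'' you invoke does not exist.  The transversality condition \eqref{eqn:harmonic1} does hold for your cylinder (it only needs the angle to be strictly less than $\pi$), but that is what drives the $L^2$ reverse-H\"older estimate of Proposition~\ref{prop:harmonic}, not a pointwise gradient bound.  The CFMS boundary regularity you cite gives only $G(x)\le C|x-y|^{\alpha}$ with some $\alpha<1$ at such a corner, which does not control $\nabla G$.

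The paper fixes exactly this point by replacing the vertical cylinder with a sphere $\pa B_{2c_1}(z_0)$ whose center $z_0$ is pushed a fixed distance \emph{outside} $\Omega$ along $-e_n$.  At a rim point $y$ one then has
\[
(y-z_0)\cdot\nu(y)\;=\;y\cdot\nu(y)-z_0\cdot\nu(y)\;\le\;M_{\etb}(|y|)-c^*|z_0|,
\]
the last inequality using Lemma~\ref{lem:graph}.  Choosing the scale so that $2c_1\in S_c$ forces $M_{\etb}(|y|)\le c|y|$ to be small compared with $c^*|z_0|$, giving $(y-z_0)\cdot\nu(y)<0$; that is, the interior angle is strictly \emph{acute}.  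This acuteness is precisely what supplies a linear barrier and hence linear decay of $G$ up to the rim, yielding $\|\nabla G\|_{L^\infty(\pa\Omega_{x_0,D})}\le C_1$.  Your prism, whose sides meet $\pa\Omega$ at an obtuse angle for any strictly convex boundary, cannot produce this bound.
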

\begin{proof}{Proposition \ref{prop:L1-harmonic-a}} Let $y_0$ be the closest point to $x_0$ on $\pa\Omega$. After a translation, we set ${y}_0 = \textbf{0}$. Now, let $c>0$ be a small constant, as in the statement of Lemma \ref{lem:graph}, and fix $c_1$ with $2c_1\in S_{c}$. If $|x_0|\geq c_1$, we can take $\Omega_{{x}_0} = B_{c_1}(x_0)$. Otherwise, choose $z_0 = (0,\ldots,0,-|z_0|)$, with $|z_0| = \tfrac{1}{10}c_1$, and set $\Omega_{{x}_0} = B_{2c_1}(z_0)\cap\Omega$. In either, case part i) of the lemma is satisfied.
\\
\\
To obtain the estimates on $G$, we show that the Dirichlet, $\pa B_{2c_1}(z_0)\cap \Omega$, and Neumann, $B_{2c_1}(z_0)\cap\pa\Omega$, parts of the boundary meet at a strictly acute angle at each point of their intersection: For $y\in  \pa B_{2c_1}(z_0)\cap\pa\Omega$, we have
\begin{align*}
(y-z_0)\cdot \nu(y) = y\cdot\nu(y) - z_0\cdot \nu(y) \leq |y|M_{\textbf{0}}(|y|) - z_0\cdot\nu(y).
\end{align*}
We have $|y|^{-1}M_{\textbf{0}}(|y|) \leq c$, and by Lemma \ref{lem:graph}, there exists a constant $c^*$, independent of $c$, such that $z_0\cdot\nu(y) \geq c^*|z_0|$. Therefore, choosing $c<\tfrac{1}{20}c^*$, we have
\begin{align*}
(y-z_0)\cdot \nu(y) \leq - \tfrac{1}{20}c^*c_1.
\end{align*}
Since $|y-z_0|$ is comparable to $c_1$, this shows that $\pa\Omega$ and $B_{2c_1}(z_0)$ meet at a strictly acute angle at $y$. With this property, we can therefore use the maximum principle with a linear function to show that the Green's function $G(x)$ decays (at least) linearly as $x$ approaches each point $y\in\pa B_{2c_1}(z_0)\cap\pa\Omega$. Since near $\pa B_{2c_1}(z_0)\cap\pa\Omega$, the function $G$ is a harmonic function on a Lipschitz domain, this implies the desired estimate for $\nabla G$. 
\end{proof}

\end{document}